\documentclass[11pt,a4paper]{article}
\usepackage{multirow}
\usepackage{epsfig}
\usepackage{epstopdf}
\usepackage[T1]{fontenc}
\usepackage{geometry}
\usepackage{amsbsy,amsmath,latexsym,amsfonts, epsfig, color, authblk, amssymb, graphics, bm}
\usepackage{epsf,slidesec,epic,eepic}
\usepackage{fancybox}
\usepackage{fancyhdr}
\usepackage{setspace}
\usepackage{cases}
\usepackage{subfigure}
\usepackage{epsfig}
\usepackage{epstopdf}
\usepackage{nccmath}
\usepackage{tikz}
\usepackage{algorithm}
\usepackage{algorithmic}
\setlength{\abovecaptionskip}{2pt}
\setlength{\belowcaptionskip}{0pt}

\setlength{\parindent}{2em}

\usepackage[colorlinks, citecolor=blue]{hyperref}

\newtheorem{theorem}{Theorem}[section]
\newtheorem{lemma}{Lemma}[section]
\newtheorem{definition}{Definition}[section]

\newtheorem{proposition}{Proposition}[section]
\newtheorem{corollary}{Corollary}[section]

\newtheorem{remark}{Remark}[section]

\newenvironment{proof}{{\noindent \bf Proof:}}{\hfill$\Box$\medskip}

\definecolor{lred}{rgb}{1,0.8,0.8}
\definecolor{lblue}{rgb}{0.8,0.8,1}
\definecolor{dred}{rgb}{0.6,0,0}
\definecolor{dblue}{rgb}{0,0,0.5}
\definecolor{dgreen}{rgb}{0,0.5,0.5}

\title{Tilt stability of Ky-Fan $\kappa$-norm composite optimization}
\author{Yulan Liu\footnote{(ylliu@gdut.edu.cn), School of Mathematics and Statistics, Guangdong University of Technology, Guangzhou},
\ \ Shaohua Pan\footnote{(shhpan@scut.edu.cn), School of Mathematics, South China University of Technology, Guangzhou.}
\ \ {\rm and}\ \ Wen Song\footnote{(wsong@hrbnu.edu.cn), School of Mathematical and Sciences, Harbin Normal University, Harbin.}}
\date{}

\begin{document}

\maketitle

\begin{abstract}
 This paper concerns the tilt stability for the minimization of the sum of a twice continuously differentiable matrix-valued function and the Ky-Fan $\kappa$-norm. To achieve this goal, we first provide a sufficient and necessary condition for a local minimizer of the composite $f=\varphi+g$ to be tilt-stable with the second subderivative of $g$, where $g$ is a closed proper convex function, and $\varphi$ is a twice continuously differentiable function that is locally convex at the local minimizer. Then, we apply the sufficient and necessary condition to the concerned Ky-Fan $\kappa$-norm composite problem, and employ the expression of second subderivative of the Ky-Fan $\kappa$-norm to derive a verifiable criterion to identify the tilt stability of a local minimum for this class of nonconvex and nonsmooth problems. As a byproduct, a practical criterion is obtained for identifying the tilt stablity of solutions to the nuclear-norm and spectral norm regularized  minimization problems. 
\end{abstract}
\noindent
{\bf Keywords} Tilt stability; Ky-Fan $\kappa$-norm regularized problems; second subderivative 

\noindent
{\bf Mathematics Subject  Classification (2020)} 49J52; 49J53; 49K40; 90C31; 90C56

\section{Introduction}

 Let $\mathbb{R}^{n\times m}\,(n\!\le\! m)$ represent the space of all $n\times m$ real matrices endowed with the trace inner product $\langle \cdot,\cdot\rangle$ and its induced Frobenius norm $\|\cdot\|_F$. For an integer $1\!\le\!\kappa\!\le\! n$, let $\Psi_{\kappa}(X)\!:=\!\sum_{i=1}^{\kappa}\sigma_i(X)$ denote the Ky-Fan $\kappa$-norm, where $\sigma_i(X)$ means the $i$th largest singular value of $X\!\in\!\mathbb{R}^{n\times m}$. We focus on  the following composite optimization problem  
 \begin{equation}\label{KnormRegular}
 \min_{X\in\mathbb{R}^{n\times m}}\Theta_{\nu,\kappa}(X):=\nu\vartheta(X)+\Psi_{\kappa}(X),
 \end{equation}
 where $\vartheta\!:\mathbb{R}^{n\times m}\to\mathbb{R}$ is a twice continuously differentiable function, and $\nu>0$ is a regularization parameter. Let $\mathcal{B}\!:\mathbb{R}^{n\times m}\to\mathbb{S}^{n+m}$ be the linear mapping defined by
 \begin{equation}\label{Bmap}
 	\mathcal{B}(X):=\begin{pmatrix}
 		0 & X\\
 		X^{\top}& 0
 	\end{pmatrix}\quad{\rm for}\ X\in\mathbb{R}^{n\times m},  
 \end{equation}
 and let $\Phi_{\kappa}(Z):=\sum_{i=1}^{\kappa}\lambda_i(Z)$ denote the sum function of the first $\kappa$ largest eigenvalues of matrices from $\mathbb{S}^{n+m}$, where $\lambda_i(Z)$ is the $i$th largest eigenvalue of $Z$.
 Apparently, $\Psi_{\!\kappa}$ is the composition of $\Phi_{\!\kappa}$ and the mapping $\mathcal{B}$, i.e., $\Psi_{\kappa}(\cdot)=\Phi_{\kappa}(\mathcal{B}(\cdot))$. The composite optimization problem \eqref{KnormRegular} has an extensive application in many fields such as matrix norm approximation \cite{Chen2016}, matrix completion and sensing \cite{Candes12,Fazel09}, control and system identification \cite{Sun13}, signal and image processing \cite{Koliada20}, and so on. 
 \subsection{Related work}\label{sec1.1}

 Tilt stability, first introduced by Poliquin and Rockafellar \cite{Poliquin98}, is a kind of single-valued Lipschitzian behavior of local minimizers of an extended real-valued function with respect to one-parametric linear or tilt perturbation. Until now, many different characterizations have been delved for it. In the original paper \cite{Poliquin98}, the limiting coderivative of the subdifferential mapping/generalized Hessian of objective functions is used to characterize tilt stability, and a quantitative version of this characterization was also given by Mordukhovich and Nghia \cite{Mordu2015}. Drusvyatskiy et al. \cite{DruLewis2014,Drusvy2014} established that tilt stability, uniform second-order growth, and strong metric regularity of subdifferential mapping are equivalent for any lower semicontinuous (lsc) extended real-valued functions. Chieu et al. \cite{ChienSiam2018} characterized tilt stability by the positive definiteness of subgradient graphical derivatives. Recently, for the minimization of the sum of a twice continuously differentiable convex function and a proper lsc convex function $g$, Nghia \cite{Nghia2025} presented a novel characterization for tilt stability by leveraging the second subderivative of the sum function under the assumption that $g$ satisfies the quadratic growth condition around the reference point $(\overline{x},\overline{y})\in{\rm gph}\,\partial g$ for a set $\mathcal{M}$ and the subdifferential mapping $\partial g$ has a relative approximations onto $\mathcal{M}$ around $(\overline{x},\overline{y})$, and obtained the verifiable criterion for three classes of specific $g$. For the applications of the equivalent characterization of tilt stability from \cite{Poliquin98} in different setting, see \cite{Lewis2013, Gfrerer2015, Mordu2021, Mordu2015}. During the review process of our paper, we notice that Cui et al. \cite{Cui2024} investigated the Lipschitz stability of the problem
 \begin{equation}\label{para-prob}
  \min_{x\in\mathbb{X}} f(x,\overline{p})+g(x),
 \end{equation}
 where $\overline{p}$ is a fixed vector of the parameteric space $\mathbb{P}$, $f:\mathbb{X}\times\mathbb{P}\to\overline{\mathbb{R}}$ is twice continuously differentiable around $(\overline{x},\overline{p})$ with $\overline{x}$ being an optimal solution of \eqref{para-prob} and $f(\cdot,\overline{p})$ is a convex function, and $g:\mathbb{X}\to\overline{\mathbb{R}}$ is a closed proper convex function. They proved in \cite[Theorem 4.4]{Cui2024} that, if the conjugate $g^*$ of $g$ is $\mathcal{C}^2$-cone reducible at $\overline{z}:=-\nabla_{x}f(\overline{x},\overline{p})\in\partial g(\overline{x})$, then $\overline{x}$ is a tilt stable optimal solution of problem \eqref{para-prob} if and only if 
 \begin{equation}\label{Cui-cond}
 {\rm Ker}\,\nabla_{\!xx}^2\!f(\overline{x},\overline{p})\cap{\rm par}\,\partial g^*(\overline{z})=\{0\}
 \end{equation}
 where ${\rm par}\,\partial g^*(\overline{z})$ denotes the paralle subspace of $\partial g^*(\overline{z})$. Their result recovers the corresponding findings in \cite{Nghia2025} when $g$ is the $\ell_1$-norm, the $\ell_1/\ell_2$-norm and the nuclear norm. 
 
 \subsection{Main contribution}
 
 Inspired by \cite{Nghia2025}, this work aims at providing a verifiable characterization for the tilt-stability of the Ky-Fan $\kappa$-norm regularization problem \eqref{KnormRegular}. To attain this goal, we first provide a sufficient and necessary condition for a local minimizer of the composite $f$ in \eqref{prob} to be tilt-stable with the second subderivative of $g$; see Proposition \ref{TiltProp2}. Then, we characterize the expression of the second subderivative of $\Psi_{\kappa}$ by  the chain rule developed in \cite{MohMSSiam2020} and the fact that $\Psi_{\kappa}$ is the composition of $\Phi_{\kappa}$ and the linear mapping $\mathcal{B}$, and apply Proposition \ref{TiltProp2} to the composite $\Theta_{\nu,\kappa}$ to obtain a sufficient and necessary criterion for identifying the tilt stability of a local minimum of problem \eqref{KnormRegular}; see Theorem \ref{ThTilt}. This criterion is point-based and checkable. As a byproduct, we recover the practical criterion obtained in \cite{Nghia2025} for the nuclear norm regularized problem. Different from \cite{Nghia2025}, our work establishes the equivalent characterization of tilt stability for problem \eqref{prob} by operating directly the expression of the second subderivative of $g$, rather than finding a set $\mathcal{M}$ such that $g$ satisfies the quadratic growth condition around the reference point for $\mathcal{M}$ and  $\partial g$ has a relative approximation onto $\mathcal{M}$ around the reference point. Our condition for $\overline{X}$ to be a tilt stable solution of \eqref{KnormRegular} with $\kappa=1$ is demonstrated in Remark \ref{main-remark41} to be weaker than the above \eqref{Cui-cond} applied to \eqref{KnormRegular} in some cases, though it is same as the above \eqref{Cui-cond} when $\kappa=n$. In addition, it is worth pointing out that unlike  \cite{Nghia2025,Cui2024}, our criterion is also applicable to the case that $\vartheta$ is nonconvex but locally convex at the reference point.
 \subsection{Notation}
 
 Throughout this paper, $\mathbb{S}^p$ represents the space of all $p\times p$ real symmetric matrices, $\mathbb{S}_{+}^p$ denotes the set of all positive semidefinite matrices in $\mathbb{S}^p$, and $\mathbb{O}^{p\times k}$ denotes the set of all $p\times k$ real matrices with orthonormal columns and write $\mathbb{O}^{p}\!:=\!\mathbb{O}^{p\times p}$. The notation $I_p$ means a $p\times p$ identity matrix,  $I^{\uparrow}_p$ denotes the $p\times p$ anti-diagonal matrix whose anti-diagonal entries are all ones and others are zeros, and $e$ stands for a vector of all ones whose dimension is known from the context. For an integer $k\ge 1$, write $[k]:=\{1,\ldots,k\}$. For a vector $x\in\mathbb{R}^n$ and an index set $J\subset[n]$, $x_{\!J}$ denotes the vector in $\mathbb{R}^{|J|}$ obtained by deleting those entries $x_j$ with $j\notin J$. For a matrix $Z\in\mathbb{S}^{p}$, $\lambda_i(Z)$ means the $i$th largest eigenvalue of $Z$, and $\mathbb{O}^p(Z):=\{P\in\mathbb{O}^p\,|\,Z=P{\rm Diag}(\lambda(Z))P^{\top}\}$ with  $\lambda(Z)\!:=(\lambda_1(Z),\ldots,\lambda_p(Z))^{\top}$. For a matrix $X\in\mathbb{R}^{n\times m}$, $\sigma_i(Z)$ represents the $i$th largest singular value of $Z$, and  $\mathbb{O}^{n,m}(X):=\{(U,V)\in\mathbb{O}^n\times\mathbb{O}^m\ |\ X=U[{\rm Diag}(\sigma(X)\ \ 0]V^{\top}\}$ with $\sigma(X)=(\sigma_1(X),\ldots,\sigma_n(X))^{\top}$. For a matrix $X\in\mathbb{R}^{n\times m}$, $\|X\|$ and $\|X\|_*$ denote the spectral norm and nuclear norm of $X$, respectively, $X^\dagger$ means the Moore-Penrose pseudo-inverse of $X$, 
 $X_{IJ}\in\mathbb{R}^{|I|\times |J|}$ for index sets $I\subset[n]$ and $J\subset[m]$ denotes the submatrix obtained by removing all rows not in $I$ and all columns not in $J$, and write $X_{J}:=X_{IJ}$ if $I=[n]$. Let 
 $\mathcal{S}(X):=(X+X^\top)/2$ and $\mathcal{T}(X):=(X-X^\top)/2$ for $X\in\mathbb{R}^{n\times n}$.
 \section{Preliminaries}
 
 Let $\mathbb{X}$ be a finite dimensional real vector space equipped with the inner product $\langle \cdot,\cdot\rangle$ and its induced norm $\|\cdot\|$. An extended real-valued function $h\!:\mathbb{X}\to \mathbb{\overline{R}}\!:=\mathbb{R}\cup \{+\infty\}$ is said to be proper if its domain ${\rm dom}\,h\!:=\{x\in \mathbb{X}\,|\, h(x)<\infty\}$ is nonempty. We first recall from the monograph \cite{RW98} the regular and basic subdifferentials of $h$ at a point $x\in {\rm dom}\,h$.
 \begin{definition}\label{gsubdiff}
 Consider a function $h\!:\mathbb{X}\to\overline{\mathbb{R}}$ and a point $x\in{\rm dom}\,h$. The regular subdifferential of $h$ at $x$, denoted by
 $\widehat{\partial}h(x)$, is defined as
 \[
 \widehat{\partial}h(x):=\bigg\{v\in\mathbb{X}\ \big|\
  \liminf_{x'\to x\atop x'\ne x}
 \frac{h(x')-h(x)-\langle v,x'-x\rangle}{\|x'-x\|}\ge 0\bigg\},
 \]
 and the basic (known as limiting or Morduhovich) subdifferential of $h$ at $x$ is defined as
 \[
 	\partial h(x):=\Big\{v\in\mathbb{X}\ |\  \exists\,x^k\to x\ {\rm with}\ h(x^k)\to h(x)\ {\rm and}\ v^k\to v\ {\rm with}\
 	v^k\in\widehat{\partial}h(x^k)\Big\}.
 \]
 \end{definition}
 
 By Definition \ref{gsubdiff}, $\widehat{\partial}h(x)\subset\partial h(x)$,  $\widehat{\partial}h(x)$ is a closed convex set, and $\partial h(x)$ is closed but generally nonconvex. When $h$ is convex, $\widehat{\partial}h(x)=\partial h(x)$, and they reduce to the subdifferential in the sense of convex analysis.
  Next we recall the subderivative of $h$.
 \begin{definition}
 (see \cite[Definition 7.20]{RW98}) Consider a function $h\!:\mathbb{X}\to\mathbb{\overline{R}}$ and a point $x\in{\rm dom}\,h$. The subderivative function $dh(x)\!:\mathbb{X}\to[-\infty,\infty]$ of $h$ at $x$ is defined as
 \[
  dh(x)(w):=\liminf_{\tau\downarrow 0\atop w'\to w}\frac{h(x+\tau w')-h(x)}{\tau}\quad{\rm for}\ w\in \mathbb{X},
 \]
 and $h$ is said to be (properly) epi-differentiable at $x$ if the first-order quotient function $\Delta_{\tau}h(x)(\cdot)\!:=\tau^{-1}[h(x+\tau \cdot)-h(x)]$ epi-converges to the (proper) function $dh(x)$ as $\tau\downarrow 0$. 
 \end{definition}
 
 When $h\!:\mathbb{X}\to \mathbb{\overline{R}}$ is directionally differentiable, $dh(x)(\cdot)\le h'(x;\cdot)$, and the inequality becomes an equality if $h$ is strictly continuous at $x$. With the subderivative function, we recall the critical cone to $h$ at a point $(x,v)$ with $ v\in\partial h(x)$:
 \begin{equation*}
 \mathcal{C}_{h}(x,v):=\big\{w\in\mathbb{X}\ |\ dh(x)(w)=\langle v,w\rangle\big\}.
 \end{equation*} 
 \subsection{Second subderivative and graphical derivative}\label{sec2.1}
 
 Next we introduce the second subderivative of an extended real-valued function. 
 \begin{definition}\label{DefSubder}
  (see \cite[Definition 13.3]{RW98}) Given a function $h\!:\mathbb{X}\to\mathbb{\overline{R}}$, a point $x\in{\rm dom}\,h$ and a vector $v\in\mathbb{X}$. The second subderivative of $h$ at $x$ for $v$ and $w$ is defined as
  \[
 	d^2h(x|v)(w):=\liminf_{\substack{\tau\downarrow 0\\ w'\to w}}\frac{h(x\!+\!\tau w')-h(x)-\tau\langle v,w'\rangle}{\tau^2/2},
  \]
 and $h$ is said to be (properly) twice epi-differentiable at $x$ for $v$ if the second-order quotient
 \[
  \Delta^2_{\tau}h(x|v)(\cdot):=2\tau^{-2}[h(x+\tau \cdot)-h(x)-\tau\langle v,\cdot\rangle] 
 \]
 epi-converges to the (proper) function $d^2h(x|v)$ as $\tau\downarrow 0$.
 \end{definition} 

 From \cite[Proposition 13.5]{RW98}, $d^2h(x|v)$ is lsc and positively homogeneous of degree $2$, and ${\rm dom}\,d^2h(x|v)\subset\{w\in\mathbb{X}\,|\,dh(x)(w)\le\langle v,w\rangle\}$, and moreover, the properness of $d^2h(x|v)$ implies that $v\in\widehat{\partial}h(x)$ and
 ${\rm dom}\,d^2h(x|v)\subset\mathcal{C}_{h}(x,v)$. Considering that the twice epi-differentiability of extended real-valued functions is often established under parabolic regularity, we here introduce the definition of this regularity condition. 
 \begin{definition}\label{def-pregular}
 A function $h\!:\mathbb{X}\to\overline{\mathbb{R}}$ is parabolically regular at $\overline{x}$ for $\overline{v}$ if $h(\overline{x})$ is finite and if for any $w$ such that $d^2h(\overline{x}|\overline{v})(w)<\infty$, there exist, among the sequences $\tau_k\downarrow 0$ and $w^k\to w$ with $\Delta_{\tau_k}^2h(\overline{x}|\overline{v})(w^k)\to d^2h(\overline{x}|\overline{v})(w)$, those with the additional property that $\limsup_{k\to\infty}\frac{\|w^k-w\|}{\tau_k}<\infty$.
 \end{definition}
  
 The second subderivative of $h\!:\mathbb{X}\to\overline{\mathbb{R}}$ at a point $(x,v)\in{\rm gph}\,\partial h$, the graph of its subdifferential mapping, has a close relation with the graphical derivative of $\partial h$ at this point. According to \cite[Definition 8.33]{RW98}, the graphical derivative of the subdifferential mapping $\partial h$ at $(x,v)\in{\rm gph}\,\partial h$ is the mapping $D\partial h(x|v):\mathbb{X}\rightrightarrows \mathbb{X}$ defined by
 \[
 	D\partial h(x|v)(w):=\big\{z\in \mathbb{X}\,|\,(w,z)\in \mathcal{T}_{{\rm gph}\,\partial h}(x,v)\big\},
 \]
 where $\mathcal{T}_{{\rm gph}\,\partial h}(x,v)$ is the tangent cone to ${\rm gph}\,\partial h$ at $(x,v)$ in the sense of \cite[Eq. 6(3)]{RW98}. The following lemma discloses the connection between the second subderivative of $h$ at $(x,v)\in{\rm gph}\,\partial h$ with its subgradient graphical derivative at this point.
 \begin{lemma}\label{relation}(see \cite[Lemma 2.4]{Nghia2025}) Let $h$ be a proper lsc convex function on $\mathbb{X}$, and denote its conjugate function as $h^*$. For any $(x,v)\in{\rm gph}\,\partial h$, it holds
  \[
    2\langle z,w\rangle\ge d^2h(x|v)(w)+d^2h^*(v|x)(z)\ \ {\rm whenever}\ z\in D\partial h(x|v)(w).
  \]
 \end{lemma}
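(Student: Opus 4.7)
The plan is to reduce everything to the Fenchel--Young equality, which is the one convex-analytic identity that couples $h$ and $h^*$ at a subgradient point. I would first unpack the hypothesis $z\in D\partial h(x|v)(w)$, which by the definition of the tangent cone means there exist $\tau_k\downarrow 0$ and $(w^k,z^k)\to (w,z)$ such that $(x+\tau_k w^k,\,v+\tau_k z^k)\in{\rm gph}\,\partial h$ for every $k$. This puts me in position to write two Fenchel--Young equalities: the base one $h(x)+h^*(v)=\langle x,v\rangle$ (from $v\in\partial h(x)$) and the perturbed one $h(x+\tau_k w^k)+h^*(v+\tau_k z^k)=\langle x+\tau_k w^k,\,v+\tau_k z^k\rangle$.

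Subtracting the two and expanding the right-hand side, the cross terms $\tau_k\langle v,w^k\rangle$ and $\tau_k\langle x,z^k\rangle$ appear naturally, so rearranging gives the key identity
\[
\bigl[h(x+\tau_k w^k)-h(x)-\tau_k\langle v,w^k\rangle\bigr]+\bigl[h^*(v+\tau_k z^k)-h^*(v)-\tau_k\langle x,z^k\rangle\bigr]=\tau_k^2\langle w^k,z^k\rangle.
\]
Dividing by $\tau_k^2/2$ turns the two bracketed terms into exactly the second-order difference quotients $\Delta^2_{\tau_k}h(x|v)(w^k)$ and $\Delta^2_{\tau_k}h^*(v|x)(z^k)$ appearing in Definition \ref{DefSubder}, while the right-hand side converges to $2\langle w,z\rangle$. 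Taking $\liminf$ and using the elementary fact $\liminf(a_k+b_k)\ge \liminf a_k+\liminf b_k$ together with the pointwise lower bounds $\liminf_k\Delta^2_{\tau_k}h(x|v)(w^k)\ge d^2h(x|v)(w)$ and $\liminf_k\Delta^2_{\tau_k}h^*(v|x)(z^k)\ge d^2h^*(v|x)(z)$ (both coming directly from the definition of second subderivative along the specific sequence $(\tau_k,w^k)$, resp.\ $(\tau_k,z^k)$) yields the claim.

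There is essentially no hard step here; the proof is one identity plus taking a $\liminf$. The only subtlety is avoiding an $\infty-\infty$ problem when combining the two liminfs, but this is harmless because the sum $a_k+b_k=2\langle w^k,z^k\rangle$ converges to a finite limit, which automatically keeps the subadditivity inequality meaningful. Notably, nothing about twice epi-differentiability, parabolic regularity, or a chain rule is needed; the whole argument is driven by the convex-analytic symmetry between $\partial h$ and $\partial h^*$ encoded in Fenchel--Young.
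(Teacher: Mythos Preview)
The paper does not give its own proof of this lemma; it is quoted verbatim from \cite[Lemma 2.4]{Nghia2025} and used as a black box. So there is nothing in the paper to compare against.

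That said, your argument is correct and is exactly the natural one. The Fenchel--Young identity at the perturbed point $(x+\tau_k w^k,\,v+\tau_k z^k)\in{\rm gph}\,\partial h$ minus the one at $(x,v)$ gives precisely
\[
\Delta^2_{\tau_k}h(x|v)(w^k)+\Delta^2_{\tau_k}h^*(v|x)(z^k)=2\langle w^k,z^k\rangle,
\]
and passing to the $\liminf$ finishes. One small sharpening of your remark about the $\infty-\infty$ issue: since $v\in\partial h(x)$ and $x\in\partial h^*(v)$, the subgradient inequality forces both difference quotients to be nonnegative for every $k$, so each $\liminf$ lies in $[0,\infty)$ (bounded above by $2\langle w,z\rangle$) and the superadditivity of $\liminf$ applies without any caveat. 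Your observation that the finite limit of the sum ``keeps the inequality meaningful'' is true but slightly indirect; the nonnegativity coming from convexity is the cleaner justification.
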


 In addition, second subderivative has a nontrivial
 connection with parabolic subderivative. Let $h\!:\mathbb{X}\to\overline{\mathbb{R}}$, and let $\overline{x}\in{\rm dom}\,h$ and $w\in\mathbb{X}$ with $dh(\overline{x})(w)$ finite. The parabolic subderivative of $h$ at $\overline{x}$ for $w$ with respect to $z$ is defined by
 \[
 d^2h(\overline{x})(w|z):=\liminf_{\tau\downarrow 0\atop z'\to z}\frac{h(\overline{x}+\tau w+\frac{1}{2}\tau^2z')-h(\overline{x})-\tau dh(\overline{x})(z')}{\frac{1}{2}\tau^2}.
 \]
 From \cite[Theorem 13.64]{RW98}, for any $(\overline{v},w)\in\mathbb{X}\times\mathbb{X}$ with
 $dh(\overline{x})(w)=\langle\overline{v},w\rangle$, it holds $d^2h(\overline{x}|v)(w)\le\inf_{z\in\mathbb{X}}\big\{d^2h(\overline{x})(w|z)-\langle\overline{v},z\rangle\big\}$.
 Recall from \cite[Definition 13.59]{RW98} that $h$ is called parabolically epi-differentiable at $\overline{x}$ for $w$ if ${\rm dom}\,d^2h(\overline{x})(w|\cdot)\ne\emptyset$ and for every $z\in\mathbb{X}$ and every sequence $\tau_k\downarrow 0$ there exists a sequences $z^k\to z$ such that 
 \[
   d^2h(\overline{x})(w|z)=\lim_{k\to\infty}\frac{h(\overline{x}+\tau_k w+\frac{1}{2}\tau_k^2z^k)-h(\overline{x})-\tau_k dh(\overline{x})(z^k)}{\frac{1}{2}\tau_k^2}.
 \]

 \subsection{Tilt stability of a class of composite problems}\label{sec2.2}
 
 We first recall the formal definition of tilt-stable local minimum from the work \cite{Poliquin98}.
 \begin{definition}
 For a proper lsc function  $f\!:\mathbb{X}\to\overline{\mathbb{R}}$, a point $\overline{x}\in{\rm dom}\,f$ is called its tilt-stable local minimum if there exists $\delta>0$ such that the following mapping
 \[
  M_{\delta}(v):=\mathop{\arg\min}_{\|x-\overline{x}\|\le\delta}\big\{f(x)-f(\overline{x})-\langle v,x-\overline{x}\rangle\big\}
 \]
 is single-valued and Lipschitz continuous on some neighborhood of $v\!=\!0$ with $M_{\delta}(0)\!=\!\{\overline{x}\}$.   	
 \end{definition}

 The following lemma provides some characterizations for the tilt stability of a local minimizer of $f$, whose proofs can be found in \cite[Theorem 3.3]{ChienSiam2018} and \cite[Theorem 3.3]{DruLewis2014}.
 \begin{lemma}\label{tilt-lemma1}
  Let $f\!:\mathbb{X}\to\overline{\mathbb{R}}$ be an lsc proper function with $\overline{x}\in{\rm dom}\,f$ and $0\in\partial f(\overline{x})$. Assume that $f$ is both prox-regular and subdifferentially continuous at $\overline{x}$ for $\overline{v}=0$. Then the following assertions are equivalent. 
  \begin{itemize}
  \item[(i)] The point $\overline{x}$ is a tilt-stable local minimizer of $f$ with modulus $\gamma>0$.
  
  \item[(ii)] There is a constant $\eta>0$ such that for all $w\in\mathbb{X}$, 
  \[
    \langle z,w\rangle\ge\frac{1}{\gamma}\|w\|^2\ \ {\rm whenever}\ z\in D\partial\!f(x|v)(w),(x,v)\in{\rm gph}\,\partial\!f\cap\mathbb{B}((\overline{x},0),\eta),
  \]
  where $\mathbb{B}((\overline{x},0),\eta)$ denotes the closed ball centered at $(\overline{x},0)$ with radius $\eta>0$. 
 
 \item[(iii)] There exists $\alpha>0$ and neighborhoods $\mathcal{U}$ of $\overline{x}$ and $\mathcal{V}$ of $0$ so that the localization of $(\partial\!f)^{-1}$ relative to $\mathcal{V}$ and $\mathcal{U}$ is single-valued and for all $x\in\mathcal{U}$,
 \[
   f(x)\ge f(\widetilde{x})+\langle\widetilde{v},x-\widetilde{x}\rangle+\alpha\|x-\widetilde{x}\|^2\ {\rm whenever}\ (\widetilde{x},\widetilde{v})\in[\mathcal{U}\times\mathcal{V}]\cap{\rm gph}\,\partial\!f.
 \]
 \end{itemize}
 \end{lemma}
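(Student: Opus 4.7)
Since Lemma 2.4 aggregates known characterizations of tilt stability taken from \cite{DruLewis2014,ChienSiam2018}, the plan is to establish the three-way equivalence by proving the cycle (i) $\Rightarrow$ (iii) $\Rightarrow$ (ii) $\Rightarrow$ (i). The prox-regularity and subdifferential continuity of $f$ at $\overline{x}$ for $\overline{v}=0$ are invoked throughout to guarantee that the localization of $\partial f$ at $(\overline{x},0)$ behaves like the subdifferential of a $\mathcal{C}^{1,1}$ convex function after proximal smoothing, so that the graph ${\rm gph}\,\partial f$ is a Lipschitzian manifold near $(\overline{x},0)$ and the usual subdifferential calculus and integration rules apply to its localization.

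For (i) $\Rightarrow$ (iii), I would start from the tilt-stable local minimum with modulus $\gamma$ and note that single-valuedness and $\gamma$-Lipschitzness of $M_{\delta}$ on a neighborhood of $0$ imply strong monotonicity of the graphical localization of $\partial f$, namely $\langle\widetilde{v}\!-\!\widetilde{v}',\widetilde{x}\!-\!\widetilde{x}'\rangle\ge\gamma^{-1}\|\widetilde{x}\!-\!\widetilde{x}'\|^2$ for all $(\widetilde{x},\widetilde{v}),(\widetilde{x}',\widetilde{v}')\in{\rm gph}\,\partial f\cap(\mathcal{U}\times\mathcal{V})$. An application of Rockafellar's subdifferential-integration mechanism (available thanks to prox-regularity and subdifferential continuity) along the segment from $\widetilde{x}$ to $x$ then promotes this monotonicity into the quadratic lower bound $f(x)\ge f(\widetilde{x})+\langle\widetilde{v},x-\widetilde{x}\rangle+\alpha\|x-\widetilde{x}\|^2$ with $\alpha=\tfrac{1}{2\gamma}$, which is exactly (iii).

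For (iii) $\Rightarrow$ (ii), I would fix $(x,v)\in{\rm gph}\,\partial f\cap\mathbb{B}((\overline{x},0),\eta)$ and $z\in D\partial f(x|v)(w)$, pick sequences $t_k\downarrow 0$ and $(w^k,z^k)\to(w,z)$ with $(x+t_kw^k,v+t_kz^k)\in{\rm gph}\,\partial f$, and apply the uniform quadratic growth inequality twice: once at $(\widetilde{x},\widetilde{v})=(x,v)$ with test point $x+t_kw^k$, and once at $(\widetilde{x},\widetilde{v})=(x+t_kw^k,v+t_kz^k)$ with test point $x$. Adding the two inequalities eliminates the function values and yields $\langle t_kz^k,t_kw^k\rangle\ge 2\alpha\|t_kw^k\|^2$; dividing by $t_k^2$ and passing to the limit gives $\langle z,w\rangle\ge\gamma^{-1}\|w\|^2$, which is the content of (ii).

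For (ii) $\Rightarrow$ (i), I would route the argument through strong metric regularity of $\partial f$ at $(\overline{x},0)$. The uniform graphical-derivative positive-definiteness in (ii) implies that $D\partial f(x|v)$ is injective with a quantitative lower bound on its inverse uniformly near $(\overline{x},0)$; by the Kummer/Mordukhovich-type characterization of strong metric regularity via graphical derivatives, which is applicable because ${\rm gph}\,\partial f$ is a Lipschitzian manifold near $(\overline{x},0)$ by prox-regularity and subdifferential continuity, this forces $(\partial f)^{-1}$ to admit a single-valued $\gamma$-Lipschitz localization at $(0,\overline{x})$, and the Drusvyatskiy-Lewis equivalence between strong metric regularity of $\partial f$ and tilt stability of $\overline{x}$ then yields (i) with the same modulus. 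I expect this last implication to be the principal obstacle, because upgrading the pointwise infinitesimal graphical-derivative estimate into a uniform Lipschitz localization of $(\partial f)^{-1}$ requires a careful covering argument on the manifold structure of ${\rm gph}\,\partial f$ and leans essentially on the standing prox-regularity and subdifferential-continuity hypotheses.
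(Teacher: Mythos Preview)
The paper does not prove Lemma~2.4; it simply records that (i)$\Leftrightarrow$(ii) is \cite[Theorem~3.3]{ChienSiam2018} and (i)$\Leftrightarrow$(iii) is \cite[Theorem~3.3]{DruLewis2014}. Your cycle (i)$\Rightarrow$(iii)$\Rightarrow$(ii)$\Rightarrow$(i) is a reasonable synthesis of those two sources, and your (iii)$\Rightarrow$(ii) step (adding the two quadratic-growth inequalities along a tangent sequence) is clean and correct.

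One point deserves more care than you give it. In (i)$\Rightarrow$(iii) you assert that $\gamma$-Lipschitzness of $M_\delta$ directly yields $\gamma^{-1}$-strong monotonicity of the localized $\partial f$. For a general monotone operator this is false: a Lipschitz single-valued inverse does not force strong monotonicity (take any linear $A=I+R$ with $R$ skew and large). The reason it \emph{is} true here is that $\partial f$ is a subdifferential: after using prox-regularity to replace $f$ by a locally convex $g=f+\tfrac{r}{2}\|\cdot\|^2$, Lipschitzness of $(\partial g)^{-1}=\nabla g^*$ is equivalent, via Fenchel duality, to local strong convexity of $g$, hence to strong monotonicity of $\partial g$, and one then undoes the quadratic shift. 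That reduction, not a bare Lipschitz-to-monotone inference, is what makes the step go through; you should make it explicit and track the constants across the shift by $rI$. Your (ii)$\Rightarrow$(i) plan via strong metric regularity is indeed the route of \cite{ChienSiam2018}, and your identification of it as the principal obstacle is accurate.
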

 
 As previously mentioned, Nghia \cite{Nghia2025} recently used the second subderivative to provide a characterization for tilt stability of a minimizer of $f=\varphi+g$, where $\varphi:\mathbb{X}\to\mathbb{R}$ is a twice continuously differentiable convex function, and $g:\mathbb{X}\to\overline{\mathbb{R}}$ is a proper lsc convex function. Here we are interested in the characterization of tilt stability for the problem 
 \begin{equation}\label{prob}
 \min_{x\in\mathbb{X}}f(x):=\varphi(x)+g(x),
 \end{equation}
 where $\varphi\!:\mathbb{X}\to\overline{\mathbb{R}}$ is a proper lsc function that is twice continuously differentiable on an open set $\mathcal{O}\supset{\rm dom}\,g$, and $g$ is the same as above. Obviously, $f$ is prox-regular and subdifferentially continuous, and  model \eqref{prob} covers the case that $g$ is weakly convex. 
 From \cite[Exercise 13.18]{RW98}, the second subderivative of $f$ at $x\in{\rm dom}\,g$ for $v$ has the form
\begin{equation}\label{SecDerForm}
 d^2\!f(x|v)(w)=\langle w, \nabla^2\varphi(x)(w)\rangle+d^2g(x|\,v\!-\!\nabla\varphi(x))(w)\quad{\rm for\ all}\ w\in \mathbb{X}.
\end{equation}
By Lemma \ref{relation} and Lemma \ref{tilt-lemma1}, we present a characterization for the tilt stability of a local minimizer of $f$ given in \eqref{prob} by leveraging its second subderivative.
 \begin{proposition}\label{TiltProp}
 Consider a local minimizer $\overline{x}$ of $f$. If $\overline{x}$ is a tilt-stable solution of \eqref{prob}, then there exist $\ell>0$ and an open neighborhood $\mathcal{U}\times\mathcal{V}$ of $(\overline{x},0)$ such that 
 \begin{equation}\label{d2-ineq0}
 d^2\!f(x|v)(w)\ge\ell\|w\|^2\quad{\rm for\ all}\ (x,v)\in{\rm gph}\,\partial\! f\cap[\mathcal{U}\times\mathcal{V}]\ {\rm and}\ w\in\mathbb{X}.
 \end{equation}
 Conversely, $\overline{x}$ is tilt-stable whenever either of the following conditions holds:
 \begin{itemize}
 \item[(a)] There exist $\eta>0$ and $\gamma>0$ such that for all $(x,v)\in{\rm gph}\,\partial\! f\cap\mathbb{B}((\overline{x},0),\eta)$ and $w\in\mathbb{X}$, the function $g$ is twice epi-differentiable at $x$ for $v-\!\nabla\varphi(x)$ and 
 \[
 \langle \nabla^2\varphi(x)w+z,w\rangle\ge\frac{1}{\gamma}\|w\|^2\ \ {\rm whenever}\ z\in \partial h(w)\ {\rm with}\ h=\frac{1}{2}d^2g(x|\,v-\!\nabla\varphi(x)),
 \]
 where $\mathbb{B}((\overline{x},0),\eta)$ denotes the closed ball centered at $(\overline{x},0)$ with radius $\eta>0$.  
 
 \item[(b)] $\nabla^2\varphi(\cdot)\succeq 0$ on an open neighborhood $\mathcal{N}\subset\mathcal{O}$ of $\overline{x}$ and the condition \eqref{d2-ineq0} is satisfied.
 \end{itemize}
 \end{proposition}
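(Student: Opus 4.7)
The plan is to split the claim into the three natural pieces---necessity plus the two independent sufficient conditions (a) and (b)---and to reduce each to one of the equivalent characterizations of tilt stability in Lemma \ref{tilt-lemma1}. I would first observe that $f=\varphi+g$ is prox-regular and subdifferentially continuous at $\overline{x}$ for $\overline{v}=0$, being the sum of a function that is $\mathcal{C}^{2}$ on an open set containing ${\rm dom}\,g$ and a proper lsc convex function, so that Lemma \ref{tilt-lemma1} indeed applies.

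For the necessity direction, my approach is to pass to the limit in the uniform quadratic growth characterization of Lemma \ref{tilt-lemma1}(iii). Tilt stability supplies $\alpha>0$ and neighborhoods $\mathcal{U},\mathcal{V}$ of $\overline{x}$ and $0$ such that $f(x)\ge f(\widetilde{x})+\langle\widetilde{v},x-\widetilde{x}\rangle+\alpha\|x-\widetilde{x}\|^{2}$ for every $x\in\mathcal{U}$ and every $(\widetilde{x},\widetilde{v})\in[\mathcal{U}\times\mathcal{V}]\cap{\rm gph}\,\partial f$. Inserting $x=\widetilde{x}+\tau w'$, dividing by $\tau^{2}/2$, and taking the lower limit as $\tau\downarrow 0$ and $w'\to w$ at once yields $d^{2}f(\widetilde{x}|\widetilde{v})(w)\ge 2\alpha\|w\|^{2}$, which is \eqref{d2-ineq0} with $\ell=2\alpha$ on the same $\mathcal{U}\times\mathcal{V}$.

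For sufficient condition (a), I would verify the subgradient graphical-derivative inequality in Lemma \ref{tilt-lemma1}(ii) at $(\overline{x},0)$. Two ingredients enter: since $\nabla\varphi$ is $\mathcal{C}^{1}$, the sum rule for graphical derivatives of $\partial f=\nabla\varphi+\partial g$ gives $D\partial f(x|v)(w)=\nabla^{2}\varphi(x)w+D\partial g(x|u)(w)$ with $u:=v-\nabla\varphi(x)$; and, for the proper lsc convex function $g$, twice epi-differentiability at $x$ for $u$ is equivalent to proto-differentiability of $\partial g$ at $(x,u)$, in which case Rockafellar's identification (essentially \cite[Theorem 13.40]{RW98}) yields $D\partial g(x|u)(w)=\partial h(w)$ with $h=\tfrac12 d^{2}g(x|u)$ proper lsc convex and positively homogeneous of degree $2$. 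Consequently every $z\in D\partial f(x|v)(w)$ decomposes as $\nabla^{2}\varphi(x)w+z'$ with $z'\in\partial h(w)$, so the assumed inequality in (a) becomes $\langle z,w\rangle\ge\gamma^{-1}\|w\|^{2}$, and Lemma \ref{tilt-lemma1}(ii)$\Rightarrow$(i) delivers tilt stability.

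For sufficient condition (b), local convexity of $\varphi$ on $\mathcal{N}$ combined with convexity of $g$ makes $f$ convex on $\mathcal{N}$. I would then apply Lemma \ref{relation} to $f$ restricted to a sufficiently small neighborhood of $\overline{x}$ (legitimately, since both sides of its inequality are intrinsic local objects; formally, one replaces $f$ off a small closed ball $\overline{B}\subset\mathcal{N}$ by $+\infty$, which leaves ${\rm gph}\,\partial f$ unchanged near $(\overline{x},0)$, and invokes Lemma \ref{relation} on the resulting globally convex function). This yields $2\langle z,w\rangle\ge d^{2}f(x|v)(w)+d^{2}f^{*}(v|x)(z)$ for every $z\in D\partial f(x|v)(w)$; convexity of $f^{*}$ renders the last term nonnegative, and \eqref{d2-ineq0} gives $\langle z,w\rangle\ge\tfrac{\ell}{2}\|w\|^{2}$, whence Lemma \ref{tilt-lemma1}(ii)$\Rightarrow$(i) again yields tilt stability. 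The main obstacle is exactly this passage from the globally convex setting of Lemma \ref{relation} to the only locally convex $f$ at hand; a secondary subtlety is ensuring in (a) that the twice epi-differentiability and the Rockafellar identification are used at every $(x,v)$ throughout the neighborhood, not just at the reference point.
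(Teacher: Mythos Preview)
Your necessity argument and your treatment of condition (a) coincide with the paper's proof up to cosmetic reorganization: the paper also passes to the limit in the uniform quadratic growth of Lemma~\ref{tilt-lemma1}(iii), and for (a) it likewise reduces to Lemma~\ref{tilt-lemma1}(ii) via \cite[Theorem~13.40]{RW98}, only it applies that theorem to $f$ (using that twice epi-differentiability of $g$ propagates to $f$ by \cite[Exercise~13.18]{RW98}) rather than to $g$ after a graphical-derivative sum rule---the two routes are equivalent.

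For condition (b) there is a genuine, if modest, difference. You localize $f$ to a closed ball on which it is convex and apply Lemma~\ref{relation} to this globally convex surrogate, then discard the nonnegative $d^{2}f^{*}$ term. The paper instead keeps the decomposition $D\partial f(x|v)(w)=\nabla^{2}\varphi(x)w+D\partial g(x|v-\nabla\varphi(x))(w)$ and applies Lemma~\ref{relation} directly to the globally convex $g$, obtaining $\langle z-\nabla^{2}\varphi(x)w,w\rangle\ge\tfrac12 d^{2}g(x|v-\nabla\varphi(x))(w)$; positive semidefiniteness of $\nabla^{2}\varphi(x)$ then gives $\langle z,w\rangle\ge\tfrac12 d^{2}f(x|v)(w)\ge\tfrac{\ell}{2}\|w\|^{2}$. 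Both arguments are correct, but the paper's version sidesteps precisely the localization issue you flag as the main obstacle: since $g$ is convex on all of $\mathbb{X}$, Lemma~\ref{relation} applies to it without any truncation, and no verification that graphs, graphical derivatives, or second subderivatives survive restriction is needed. Your route buys a slightly more conceptual inequality ($2\langle z,w\rangle\ge d^{2}f+d^{2}f^{*}$ at the level of $f$ itself), at the cost of that extra bookkeeping.
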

 \begin{proof}
 Let $\overline{x}$ be a tilt-stable solution of \eqref{prob}. By Lemma \ref{tilt-lemma1} (iii), there exist $\ell'>0$ and an open neighborhood $\mathcal{U}\times\mathcal{V}$ of $(\overline{x},0)$ such that for any $(z,v)\in{\rm gph}\,\partial\!f\cap[\mathcal{U}\times\mathcal{V}]$ and any $x\in\mathcal{U}$, 
 \[
 f(x)-f(z)-\langle v,x-z\rangle\ge({\ell'}/{2})\|x-z\|^2,
 \] 
 which by Definition \ref{DefSubder} implies that $d^2\!f(z|v)(w)\ge({\ell'}/{2})\|w\|^2$ for all $w\in\mathbb{X}$. Consequently, the inequality \eqref{d2-ineq0} holds for all $(z,v)\in{\rm gph}\,\partial\! f\cap[\mathcal{U}\times\mathcal{V}]$ and $w\in\mathbb{X}$.   
 
 In what follows, we focus on the proof of the converse conclusion. Suppose that the condition (a) is satisfied. We claim that Lemma \ref{tilt-lemma1} (ii) holds. Fix any $(x,v)\in{\rm gph}\,\partial\! f\cap\mathbb{B}((\overline{x},0),\eta)\cap[\mathcal{O}\times\mathbb{X}]$ and $w\in\mathbb{X}$. Pick any $z'\in D\partial\!f(x|v)(w)$.
 Since $g$ is twice epi-differentiable at $x$ for $v-\nabla\varphi(x)$, from \cite[Exercise 13.18]{RW98}, $f$ is twice epi-differentiable at $x$ for $v$, which along with \cite[Theorem 13.40]{RW98} implies $D\partial\!f(x|v)(w)=\nabla^2\varphi(x)w+\partial h(w)$ with $h=\frac{1}{2}d^2g(x|\,v-\!\nabla\varphi(x))$. Thus, there exists $z\in\partial h(w)$ such that $z'=\nabla^2\varphi(x)w+z$. From the condition (a), it follows $\langle z',w\rangle=\langle\nabla^2\varphi(x)w+z,w\rangle\ge\frac{1}{\gamma}\|w\|^2$. This shows that Lemma \ref{tilt-lemma1} (ii) holds, so $\overline{x}$ is a tilt-stable solution. Now suppose that the condition (b) is satisfied. Then, there exist $\ell>0$ and an open neighborhood $\mathcal{U}\times\mathcal{V}$ of $(\overline{x},0)$ such that the inequality \eqref{d2-ineq0} holds for all $(x,v)\in{\rm gph}\,\partial\! f\cap[\mathcal{U}\times\mathcal{V}]$ and $w\in\mathbb{X}$. Fix any $(x,v)\in{\rm gph}\,\partial\! f\cap[(\mathcal{U}\cap\mathcal{N})\times\mathcal{V}]$ and $w\in\mathbb{X}$. Pick any $z\in D\partial\!f(x|v)(w)$. From the twice continuous differentiability of $\varphi$ on $\mathcal{O}$ and the expression of $f$, it follows $z-\nabla^2\varphi(x)(w)\in D\partial g(x|\,v-\!\nabla\varphi(x))(w)$. Invoking Lemma \ref{relation} leads to
 \[
  \langle z-\nabla^2\varphi(x)(w),w\rangle\ge \frac{1}{2}d^2 g(x|\,v-\!\nabla\varphi(x))(w),
 \]   
 which by the positive semidefinitness of $\nabla^2\varphi(x)$ and the above \eqref{SecDerForm} implies that 
 \begin{align*}
  \langle z,w\rangle&\ge\langle w, \nabla^2\varphi(x)(w)\rangle+\frac{1}{2}d^2g(x|\,v-\!\nabla\varphi(x))(w)\\
  &\ge\frac{1}{2}\big[\langle w, \nabla^2\varphi(x)(w)\rangle+d^2g(x|\,v-\!\nabla\varphi(x))(w)\big]\\
  &\stackrel{\eqref{SecDerForm}}{=}\frac{1}{2}d^2\!f(x|v)(w)\stackrel{\eqref{d2-ineq0}}{\ge}\frac{1}{2}\ell\|w\|^2.
 \end{align*}
 That is, $\langle z,w\rangle\ge\frac{1}{2}\ell\|w\|^2$ when  $z\in D\partial f(x|v)(w)$ with $(x,v)\in{\rm gph}\,\partial\!f\cap[(\mathcal{U}\cap\mathcal{N})\times\mathcal{V}]$. By virtue of Lemma \ref{tilt-lemma1} (ii), $\overline{x}$ is a tilt-stable solution of \eqref{prob}. 
 \end{proof}

 Now we are ready to characterize the tilt stability of a local minimizer of the composite $f$ in \eqref{prob} by leveraging the second subderivative of $g$, which will be used in Section \ref{sec4} to establish the main result of this paper. 
 \begin{proposition}\label{TiltProp2}
 Let $\overline{x}$ {\color{blue}be} a local minimizer of the function $f$ in \eqref{prob}. Then $\overline{x}$ is  tilt-stable if $\nabla^2\varphi(\cdot)\succeq 0$ on an open neighborhood $\mathcal{N}$ of $\overline{x}$ and
 \(
   {\rm Ker}\,\nabla^2\varphi(\overline{x})\cap \mathcal{W}=\{0\}
 \)
 with 
 \begin{align*}
 \mathcal{W}=\big\{w\in \mathbb{X}\,|\, \exists (x^k,y^k)\in {\rm gph}\,\partial g {\;\rm and \;} w^k\in \mathbb{X} {\;\rm with \; } \lim\limits_{k\to \infty} d^2 g(x^k|y^k)(w^k)=0 \\
 {\, \rm and \,} \lim\limits_{k\to \infty}(x^k,y^k,w^k)= (\overline{x},-\nabla \varphi(\overline{x}),w)\big\}.\qquad\qquad
 \end{align*}
 Conversely, if $\overline{x}$ is a tilt-stable solution of \eqref{prob}, then ${\rm Ker}\,\nabla^2\varphi(\overline{x})\cap \mathcal{W}=\{0\}$. 
 \end{proposition}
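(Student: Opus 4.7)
My approach is to use Proposition \ref{TiltProp} together with the sum formula \eqref{SecDerForm} to translate between tilt stability, the inequality \eqref{d2-ineq0} on $d^2\!f$, and the condition on $d^2g$ that is built into $\mathcal{W}$. The key observation is that whenever $(x,v)\in{\rm gph}\,\partial\!f$ with $x\in{\rm dom}\,g$, the vector $y:=v-\nabla\varphi(x)$ lies in $\partial g(x)$ and, as $(x,v)\to(\overline{x},0)$, one has $(x,y)\to(\overline{x},-\nabla\varphi(\overline{x}))$, which is precisely the base point appearing in the definition of $\mathcal{W}$.

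For the converse direction I would apply the first (necessary) half of Proposition \ref{TiltProp} to obtain $\ell>0$ and a neighborhood $\mathcal{U}\times\mathcal{V}$ of $(\overline{x},0)$ on which \eqref{d2-ineq0} holds. For any $w\in{\rm Ker}\,\nabla^2\varphi(\overline{x})\cap\mathcal{W}$, I would unfold the definition of $\mathcal{W}$ to obtain $(x^k,y^k)\in{\rm gph}\,\partial g$ and $w^k\to w$ with $(x^k,y^k)\to(\overline{x},-\nabla\varphi(\overline{x}))$ and $d^2g(x^k|y^k)(w^k)\to 0$. Setting $v^k:=y^k+\nabla\varphi(x^k)\to 0$, I get $(x^k,v^k)\in{\rm gph}\,\partial\!f\cap[\mathcal{U}\times\mathcal{V}]$ for $k$ large, and \eqref{SecDerForm} gives
\[
d^2\!f(x^k|v^k)(w^k)=\langle w^k,\nabla^2\varphi(x^k)w^k\rangle+d^2g(x^k|y^k)(w^k),
\]
whose right-hand side tends to $\langle w,\nabla^2\varphi(\overline{x})w\rangle+0=0$ by $w\in{\rm Ker}\,\nabla^2\varphi(\overline{x})$. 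Combined with $d^2\!f(x^k|v^k)(w^k)\ge\ell\|w^k\|^2$ from \eqref{d2-ineq0}, this forces $\|w\|=0$.

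For the forward direction I would invoke Proposition \ref{TiltProp}(b) and verify \eqref{d2-ineq0} by contradiction. If \eqref{d2-ineq0} failed, there would exist sequences $(x^k,v^k)\in{\rm gph}\,\partial\!f$ with $(x^k,v^k)\to(\overline{x},0)$ and $w^k\ne 0$ satisfying $d^2\!f(x^k|v^k)(w^k)<k^{-1}\|w^k\|^2$. Using degree-$2$ positive homogeneity of $d^2\!f(x^k|v^k)(\cdot)$ I normalize $\|w^k\|=1$ and extract a subsequential limit $w$ with $\|w\|=1$. Set $y^k:=v^k-\nabla\varphi(x^k)$, so $(x^k,y^k)\in{\rm gph}\,\partial g$ and $y^k\to-\nabla\varphi(\overline{x})$. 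For $k$ large $x^k\in\mathcal{N}$, hence $\nabla^2\varphi(x^k)\succeq 0$; together with $d^2g(x^k|y^k)(w^k)\ge 0$ (a consequence of convexity of $g$ at $y^k\in\partial g(x^k)$), the formula \eqref{SecDerForm} expresses $d^2\!f(x^k|v^k)(w^k)$ as a sum of two nonnegative terms bounded above by $1/k$. Hence each summand tends to $0$: the vanishing $\langle w,\nabla^2\varphi(\overline{x})w\rangle=0$ combined with $\nabla^2\varphi(\overline{x})\succeq 0$ gives $w\in{\rm Ker}\,\nabla^2\varphi(\overline{x})$, while the witnesses $(x^k,y^k,w^k)$ place $w$ in $\mathcal{W}$. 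This contradicts $\|w\|=1$ under ${\rm Ker}\,\nabla^2\varphi(\overline{x})\cap\mathcal{W}=\{0\}$.

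The main technical point to watch is the forward step from $\langle w,\nabla^2\varphi(\overline{x})w\rangle=0$ to $w\in{\rm Ker}\,\nabla^2\varphi(\overline{x})$; this genuinely requires the local PSD hypothesis, since in its absence a null quadratic form need not kill the matrix action. Beyond that, the proof is a careful combination of the sum rule \eqref{SecDerForm}, the nonnegativity of the convex second subderivative at subgradient pairs, and the positive homogeneity of $d^2\!f(x,v)(\cdot)$.
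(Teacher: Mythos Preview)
Your proposal is correct and follows essentially the same approach as the paper's own proof: both directions hinge on Proposition~\ref{TiltProp} and the sum rule \eqref{SecDerForm}, with the forward direction arguing by contradiction that the failure of \eqref{d2-ineq0} produces a unit vector $w$ in ${\rm Ker}\,\nabla^2\varphi(\overline{x})\cap\mathcal{W}$, and the converse direction applying the necessary part of Proposition~\ref{TiltProp} to any $w$ in that intersection. Your explicit mention of positive homogeneity to normalize $\|w^k\|=1$ and your remark that passing from $\langle w,\nabla^2\varphi(\overline{x})w\rangle=0$ to $w\in{\rm Ker}\,\nabla^2\varphi(\overline{x})$ genuinely uses the PSD hypothesis are exactly the points the paper relies on (though stated more tersely there).
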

 \begin{proof}
 Suppose on the contrary that $\overline{x}$ is not a tilt-stable solution of \eqref{prob}. By Proposition \ref{TiltProp}, there exist sequences $(x^k,v^k)\in{\rm gph}\,\partial\!f\cap[\mathcal{N}\times\mathbb{X}]$ and $w^k\in\mathbb{X}$ with $\|w^k\|\!=\!1$ such that $(x^k,v^k)\to (\overline{x},0)$ and
  \(
  d^2\!f(x^k|v^k)(w^k)<\frac{1}{k}\|w^k\|^2.
  \)
 From \eqref{SecDerForm}, for each $k\in\mathbb{N}$, 
 \begin{equation}\label{Prop2Temp}
 \langle w^k, \nabla^2\varphi(x^k)w^k\rangle+d^2 g(x^k|y^k)(w^k)<\frac{1}{k}\|w^k\|^2\ \ {\rm with}\ \ y^k:=v^k-\nabla\varphi(x^k).
 \end{equation}
 Obviously, $(x^k,y^k)\in {\rm gph}\,\partial g$ for each $k$, and $y^k\to -\nabla\varphi(\overline{x})$ as $k\to\infty$. Since $\|w^k\|\!=\!1$ for all $k\in\mathbb{N}$, if necessary by taking a subsequence, we can assume $\lim_{k\to \infty} w^k=w$. Note that $d^2 g(x^k|y^k)(w^k)\geq 0$ by the convexity of $g$, so the  inequality \eqref{Prop2Temp} implies that $\langle w^k, \nabla^2\varphi (x^k)w^k\rangle<\frac{1}{k}\|w^k\|^2$. Passing the limit $k\to \infty$ leads to $\langle w,\nabla^2 \varphi(\overline{x})w\rangle\leq 0$. This along with $\nabla^2\varphi(\overline{x})\succeq 0$ implies $w\in {\rm Ker}\,\nabla^2\varphi(\overline{x})$. On the other hand, since $x^k\in \mathcal{N}$, we have $\nabla^2\varphi(x^k)\succeq 0$, which along with the above \eqref{Prop2Temp} implies that $d^2 g(x^k|y^k)(w^k)<\frac{1}{k}\|w^k\|^2$. Passing the limit $k\to \infty$ yields $\lim_{k\to \infty} d^2 g(x^k|y^k)(w^k)\leq 0$. This along with $d^2 g(x^k|y^k)(w^k)\geq 0$ implies that $\lim_{k\to \infty} d^2 g(x^k|y^k)(w^k)= 0$. Recalling that $(x^k,y^k)\in {\rm gph}\,\partial g$ and $(x^k,y^k,w^k)\to (\overline{x},-\nabla\varphi(\overline{x}),w)$, we have  $w\in\mathcal{W}$. This, together with $w\in {\rm Ker}\,\nabla^2\varphi(\overline{x})$ and the assumption ${\rm Ker}\,\nabla^2 \varphi(\overline{x})\cap \mathcal{W}=\{0\}$, yields a contradiction to $\|w\|=1$. Hence, $\overline{x}$ is a tilt-stable solution of \eqref{prob}.
  
 Now let $\overline{x}$ {\color{blue}be} a tilt-stable solution of \eqref{prob}. Pick any $w\in{\rm Ker}\,\nabla^2\varphi(\overline{x})\cap \mathcal{W}$. From $w\in\mathcal{W}$, there exist sequences $\{(x^k,y^k)\}_{k\in\mathbb{N}}\subset {\rm gph}\,\partial g$ with $x^k\in\mathcal{N}$ and $w^k\in\mathbb{X}$ such that
 \begin{equation}\label{Prop2Temp2}
 \lim_{k\to \infty} d^2 g(x^k|y^k)(w^k)=0\ \ {\rm and}\ \
 \lim_{k\to \infty}(x^k,y^k,w^k)=(\overline{x},-\nabla \varphi(\overline{x}),w).
 \end{equation}
 Write $v^k:=y^k+\nabla\varphi(x^k)$ for each $k\in\mathbb{N}$. Then $v^k\to 0$ and $(x^k, v^k)\in {\rm gph}\,\partial f$. By the tilt-stability of $\overline{x}$, Proposition \ref{TiltProp} and \eqref{SecDerForm}, there is $\ell>0$ such that for all $k$ large enough,
 \[
  \ell \|w^k\|^2\leq d^2 g(x^k|y^k)(w^k)+\langle w^k, {\color{blue}\nabla^2}\varphi(x^k) w^k).
 \]
 Passing the limit $k\to \infty$ and using the limits in \eqref{Prop2Temp2}
 and the fact $w\!\in\! {\rm Ker}\,\nabla^2\varphi(\overline{x}) $ leads to $\ell\|w\|^2\leq \langle w,\nabla^2 \varphi(\overline{x})w\rangle=0$, so $w=0$ follows. The desired result is obtained.
 \end{proof}

 \begin{remark}\label{local-convex}
 The condition that $\nabla^2\varphi(\cdot)\succeq 0$ on an open convex neighborhood of $\overline{x}$ is equivalent to the convexity of $\varphi$ on this neighborhood (i.e., $\varphi$ is locally convex at $\overline{x}$); see \cite[Proposition B.4]{Bertsekas99} and its proof. Obviously, it is implied by  $\nabla^2\varphi(\overline{x})\succ 0$, which is a point-based and checkable condition. In addition, for many nonconvex optimization problems, it is very common that their objective functions are locally convex at a local optimal solution even a stationary point. Thus, the sufficient condition in Proposition \ref{TiltProp2} is applicable to some nonconvex composite problems.
 \end{remark}
 
 \subsection{Twice epi-differentiability of $\Phi_{\kappa}$}\label{sec2.3}

 Fix any $Z\in\mathbb{S}^{n+m}$. Let $\mu_1(Z)>\cdots>\mu_{\varsigma}(Z)$ be the distinct eigenvalues of $Z$ and define
 \begin{align}\label{thetalDef}
 \theta_l(Z)\!:=\big\{i\in[p]\ |\ \lambda_i(Z)=\mu_l(Z)\big\}\quad {\rm for\ each}\ l\in[\varsigma],
 \end{align}
 where $l_i(Z)$ denotes the number of eigenvalues that rank before $\lambda_i(Z)$ and equal $\lambda_i(Z)$ (including $\lambda_i(Z)$). That is, the eigenvalues of $Z$ take the following form 
 \begin{equation*}
 \lambda_1(Z)\geq \cdots\geq \lambda_{i-l_i(Z)}(Z)>\lambda_{i-l_i(Z)+1}(Z)=\cdots=\lambda_i(Z)\geq\cdots\geq \lambda_{n+m}(Z).
 \end{equation*}
 The following lemma characterizes the subdifferential of $\Phi_{\kappa}$ and its second subderivative, whose proofs can be found in \cite[Theorem 3.5]{Overton93} and \cite[Theorem 2.5]{Torki99}, respectively.   
 \begin{lemma}\label{Phik-lemma}
 Fix any $Z\in\mathbb{S}^{n+m}$ with $\varsigma$ distinct eigenvalues. Pick any $Q\in\mathbb{O}^{n+m}(Z)$. For each $l\in[\varsigma]$, let $\theta_{l}=\theta_{l}(Z)$ with $\theta_{l}(Z)$ defined by \eqref{thetalDef}. Let $r\in[\varsigma]$ be such that $\kappa\in\theta_{r}$. Then, the following assertions hold true.
 \begin{itemize}
  \item [(i)] The subdifferential of $\Phi_{\kappa}$ at $Z$ takes the following form
 		\begin{equation*}
 		\partial \Phi_{\kappa}(Z)
 			=\sum_{l=1}^{r-1}Q_{\theta_l} Q^{\top}_{\theta_l}+\Big\{Q_{\theta_{r}}{\rm Diag}(\xi) Q^{\top}_{\theta_{r}}\ |\ 
 			\xi\in\Omega_{r}\Big\}, 
 		\end{equation*}
 		where $\Omega_{r}:=\big\{z\in \mathbb{R}^{|\theta_{r}|}\ |\ 0\leq z_i\leq 1\ {\rm for\ each}\ i\in [|\theta_{r}|]\ {\rm and}\  \sum_{i=1}^{|\theta_{r}|}z_i=l_{\kappa}(Z) \big\}$.
 		
  \item [(ii)] The function $\Phi_{\kappa}$ is semi-differentiable at $Z$, and for any $H\in \mathbb{S}^{n+m}$,
 		\[
 		 d\Phi_k(Z)(H)=\Phi_{\kappa}'(Z;H)= \sum_{l=1}^{r-1}{\rm tr}(Q^{\top}_{\theta_{l}}H Q_{\theta_{l}})+\Phi_{l_{\kappa}(Z)}(Q^{\top}_{\theta_{r}}H Q_{\theta_{r}}).
 		\]  
 		
 \item[(iii)] $\Phi_{\kappa}$ is properly twice epi-differentiable at $Z$, and for any $S\in\partial\Phi_{\kappa}(Z)$, if a matrix $H\in\mathbb{S}^{n+m}$ is such that $\Phi_{l_{\kappa}(Z)}(Q^{\top}_{\theta_{r}}H Q_{\theta_{r}})=\langle \widetilde{S}, H\rangle$ with $\widetilde{S}\!:=S-\!\sum_{l=1}^{r-1}Q_{\theta_l} Q^{\top}_{\theta_l}$, then 
 		\[
 	    \frac{1}{2}d^2 \Phi_{\kappa}(Z|S)(H)=\sum_{l=1}^{r-1}{\rm tr}( Q^{\top}_{\theta_{l}}H(\mu_{l}(Z)I_{n+m}-Z)^{\dagger}HQ_{\theta_{l}})+\langle \widetilde{S}, 
 	    H(\mu_{r}(Z)I_{n+m}\!-\!Z)^{\dagger}H\rangle.
 	    \]
  \end{itemize}
 \end{lemma}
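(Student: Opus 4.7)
My plan is to base the proof on Ky Fan's variational representation
\[
\Phi_{\kappa}(Z)=\max\bigl\{\langle U,Z\rangle\,:\,U\in\mathbb{S}^{n+m},\ 0\preceq U\preceq I_{n+m},\ {\rm tr}(U)=\kappa\bigr\},
\]
which exhibits $\Phi_{\kappa}$ as the support function of a compact convex ``Fan set'' $\mathcal{F}_{\kappa}$, together with the spectral decomposition $Z=Q\,{\rm Diag}(\lambda(Z))Q^{\top}$ at the reference point. This formulation immediately gives convexity, Lipschitzness, and positive homogeneity of $\Phi_{\kappa}$, which are used throughout.

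For (i), I would identify the maximizers of $\langle U,Z\rangle$ over $\mathcal{F}_{\kappa}$. Writing $U=Q\,{\rm Diag}(u)Q^{\top}$ diagonalizes the objective to $\sum_i u_i\lambda_i(Z)$, and the ordering of eigenvalues forces $u_i=1$ for $i$ in the first $r-1$ eigenvalue groups $\theta_1,\ldots,\theta_{r-1}$, $u_i=0$ for indices beyond $\theta_r$, and allows $u_{\theta_r}$ to range over the constrained set $\Omega_r$ (with the coordinates summing to $l_{\kappa}(Z)$). The subdifferential formula then follows from the standard identity $\partial\Phi_{\kappa}(Z)=\arg\max_{U\in\mathcal{F}_{\kappa}}\langle U,Z\rangle$ for support functions.

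For (ii), I would use the block structure of $Q^{\top}HQ$ relative to the partition $\{\theta_l\}_{l=1}^{\varsigma}$. Lidskii-type directional differentiability of eigenvalues asserts that for $i\in\theta_l$ one has $\lambda_i(Z+\tau H)=\mu_l(Z)+\tau\lambda_{i-(\min\theta_l)+1}(Q_{\theta_l}^{\top}HQ_{\theta_l})+o(\tau)$. Summing the first $\kappa$ contributions yields full traces ${\rm tr}(Q_{\theta_l}^{\top}HQ_{\theta_l})$ from the fully-included groups $l<r$, plus $\Phi_{l_{\kappa}(Z)}(Q_{\theta_r}^{\top}HQ_{\theta_r})$ from the partially-included $r$-th group, and local Lipschitzness of $\Phi_{\kappa}$ upgrades directional differentiability to semi-differentiability.

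The hard part is (iii). I would compute the second-order quotient $2\tau^{-2}[\Phi_{\kappa}(Z+\tau H)-\Phi_{\kappa}(Z)-\tau\langle S,H\rangle]$ along a critical direction. The critical-cone condition $\Phi_{l_{\kappa}(Z)}(Q_{\theta_r}^{\top}HQ_{\theta_r})=\langle\widetilde{S},H\rangle$ ensures vanishing of the first-order remainder. For each fully-included group $l<r$, a Rayleigh--Schr\"odinger expansion of the $\theta_l$-invariant subspace gives a second-order coefficient equal to ${\rm tr}(Q_{\theta_l}^{\top}H(\mu_l(Z)I_{n+m}-Z)^{\dagger}HQ_{\theta_l})$; the pseudo-inverse $(\mu_l I-Z)^{\dagger}$ arises because it inverts the spectral gap on the complement of the $\mu_l$-eigenspace, with cross-block contributions pairing $\theta_l$ with $\theta_j$ assembled via the factors $(\mu_l-\mu_j)^{-1}$. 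An analogous second-order analysis inside the partially-active $r$-th block, weighted by $\widetilde{S}$, produces the extra term $\langle\widetilde{S},H(\mu_r(Z)I_{n+m}-Z)^{\dagger}H\rangle$. The main obstacle is to promote pointwise convergence of these quotients to genuine epi-convergence: I would exploit the convexity of the quotient functions inherited from $\Phi_{\kappa}$ together with their local boundedness, so that \cite[Theorem 7.17]{RW98} converts pointwise to epi-convergence. The subtlest bookkeeping is to verify that the $\widetilde{S}$-weighted term is independent of the specific decomposition within $\Omega_r$ when $l_{\kappa}(Z)<|\theta_r|$, which would likely require a careful use of the trace identity ${\rm tr}(\widetilde{S}\,Q_{\theta_r}Q_{\theta_r}^{\top})=l_{\kappa}(Z)$.
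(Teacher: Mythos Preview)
The paper does not actually prove this lemma: it simply records the statements and cites \cite[Theorem 3.5]{Overton93} for part (i) and \cite[Theorem 2.5]{Torki99} for parts (ii)--(iii). Your sketch is essentially a reconstruction of what those references do. The support-function argument over the Fan polytope $\mathcal{F}_{\kappa}$ is exactly the Overton--Womersley derivation of $\partial\Phi_{\kappa}$; the Lidskii first-order expansion of eigenvalues within each block $\theta_l$ is the standard route to (ii); and Torki's proof of (iii) proceeds precisely via the second-order Rayleigh--Schr\"odinger expansion you describe, with epi-convergence of the (convex, finite-valued) second-order quotients obtained from pointwise convergence. So your proposal is correct and aligned with the cited literature, even though the paper itself offers no argument to compare against.

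One small correction: your closing worry about the $\widetilde{S}$-weighted term being ``independent of the specific decomposition within $\Omega_r$'' is misplaced. Once $S\in\partial\Phi_{\kappa}(Z)$ is fixed, $\widetilde{S}=S-\sum_{l<r}Q_{\theta_l}Q_{\theta_l}^{\top}$ is uniquely determined because the spectral projectors $Q_{\theta_l}Q_{\theta_l}^{\top}$ do not depend on the choice of $Q\in\mathbb{O}^{n+m}(Z)$. The formula in (iii) is stated directly in terms of $\widetilde{S}$, not in terms of a representing $\xi\in\Omega_r$, so there is nothing to disambiguate. The genuine work in (iii) is the uniform control of the second-order remainder across the eigenvalue blocks and the passage to epi-convergence, both of which your outline addresses adequately.
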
 
 \section{Characterization of second subderivative of \texorpdfstring{$\Psi_{\!\kappa}$}{}}
    
 Recall that $\Psi_{\!\kappa}$ is the composition of $\Phi_{\kappa}$ and the mapping $\mathcal{B}$ in \eqref{Bmap}. We use the chain rule in \cite[Theorem 5.4]{MohMSSiam2020} to characterize the second subderivative of $\Psi_{\!\kappa}$. Before doing this, we introduce some notation for the subsequent analysis.
 For any $X\in\mathbb{R}^{n\times m}$, define
 \begin{equation}\label{abc}
 a(X)\!:=\!\big\{i\in[n]\ |\ \sigma_i(X)>0\big\},\
 b(X)\!:=\!\big\{i\in[n]\ |\ \sigma_i(X)\!=0\big\}
 	\ \ {\rm and}\ \ c\!:=\!\{n+1,\ldots,m\}.
 \end{equation}
 Let $\nu_1(X)>\cdots>\nu_{s(X)}(X)$ be the nonzero distinct singular values of $X$, and write
 \begin{equation}\label{al}
  a_l(X)\!:=\left\{i\in[n]\ |\ \sigma_i(X)=\nu_l(X)\right\}\ \ \forall\, 
  l\in[ s(X)]\ \ {\rm and}\ a_{s(X)+1}(X)\!:=b(X).
 \end{equation}
 For any $(U,V)\in\mathbb{O}^{n,m}(X)$, with $a=a(X),b=b(X)$, let $U^{\uparrow}_{a}:=U_a I^{\uparrow}_{|a|},
 V^{\uparrow}_{a}:=V_a I^{\uparrow}_{|a|}$,
 \begin{equation}\label{Pmatrix}
  P:=\frac{1}{\sqrt{2}}\left[\begin{matrix}
  U_{a}&U_{b} &0 &U_b &U^{\uparrow}_a\\
  V_{a}&V_{b} &\sqrt{2}V_{\!c} &-V_b &-V^{\uparrow}_a\\
  \end{matrix}\right]\ \ {\rm and}\ \  
  P_{0}:=
  \frac{1}{\sqrt{2}}\left[\begin{matrix}
   U_{b} &0 &U_b \\
  V_{b} &\sqrt{2}V_{\!c} &-V_b \\
  \end{matrix}\right].
 \end{equation}
 It is easy to check that $P\in\mathbb{O}^{n+m}$. Also, 
 from \cite[Theorem 7.3.7]{Horn90}, it follows that 
 \begin{equation}\label{BX-svd}
  P^{\top}\mathcal{B}(X)P
 =\left[\begin{matrix}
 	{\rm Diag}(\sigma(X)) & 0 & 0\\
 	0 & 0 & 0\\
 	0 & 0  & -{\rm Diag}(I^{\uparrow}_{n}\sigma(X))
 \end{matrix}\right].
 \end{equation} 
 
 Note that $\Psi_{\!\kappa}(X)\!=h_{\kappa}(\sigma(X))$ with  $h_{\kappa}(x)=\sum_{i=1}^{\kappa}|x|_i^{\downarrow}$ for $x\in\mathbb{R}^n$. Since $h_{\kappa}$ is absolutely symmetric, i.e., $h_{\kappa}(Qx)=h_{\kappa}(x)$ for any $n\times n$ signed permutation matrix $Q$, according to \cite[Corollary 2.5]{Lewis95}, $\Gamma\in\partial\Psi_{\kappa}(X)$ if and only if $\sigma(\Gamma)\in\partial h_{\kappa}(\sigma(X))$ and there exists $(U,V)\in\mathbb{O}^{n,m}(X)\cap\mathbb{O}^{n,m}(\Gamma)$, i.e., a simulatenous ordered SVD of the form 
 \begin{equation}\label{BarXGamma-SVD}
  X\!=U\big[{\rm Diag}(\sigma(X))\ \ 0\big]V^{\top}
  \ {\rm and}\  
 \Gamma\!=U\big[{\rm Diag}(\sigma(\Gamma))\ \ 0\big]V^{\top}.
 \end{equation} 
 Together with \cite[Lemma 2.3]{Wu14}, we have the following characterization for the subdifferential of $\Psi_{\kappa}$ at a point $X\in\mathbb{R}^{n\times m}$, which was also given in \cite[Lemma 3]{Ding17}.
 \begin{lemma}\label{KnorSubdifLemma}
 Consider any $X\!\in\mathbb{R}^{n\times m}$. Let $a_l=a_l(X)$ for each $l\in[s(X)\!+1]$ with $a_l(X)$ defined by \eqref{al}, and let $r\in[s(X)\!+1]$ be the integer such that $\kappa\in a_{r}$. Then, $\Gamma\!\in\partial\Psi_{\!\kappa}(X)$ if and only if the following assertions hold:
 \begin{itemize}
 \item [(i)] when $r\in[ s(X)]$, there exist integers $0\le \kappa_0\le\kappa-1$ and $\kappa\le\kappa_1\le n$ such that  
 		\begin{align*}
 		 \sigma_1(X)\geq \cdots\geq \sigma_{\kappa_0}(X)>\sigma_{\kappa_0+1}(X)=\cdots=\sigma_{\kappa}(X)=\cdots=\sigma_{\kappa_1}(X)\\
 		 >\sigma_{\kappa_1+1}(X)\ge\cdots\ge\sigma_n(X)\geq 0,\qquad\\	
 		\sigma_{\alpha}(\Gamma)=e_{\alpha},\ {\textstyle\sum_{i\in \beta }}\sigma_i(\Gamma)=\kappa-\kappa_0\ {\rm with}\  0\leq \sigma_{\beta}(\Gamma)\leq e_{\beta}, \ \sigma_{\gamma}(\Gamma)=0,
 		\end{align*}
 		where $\alpha\!:=\![\kappa_0],\, \beta\!:=\!\{\kappa_0\!+\!1,\ldots,\kappa_1\}=a_{r}, \gamma\!:=\{\kappa_1\!+\!1,\ldots,n\}=\bigcup_{l=r+1}^{ s(X)+1}a_l$; 
 		
 \item [(ii)] when $r= s(X)\!+\!1$, there exists an integer $\kappa_0$ with $0\le\kappa_0\le\kappa-1$ such that  
 		\begin{align*}
 		\sigma_1(X)\geq \ldots\geq \sigma_{\kappa_0}(X)>\sigma_{\kappa_0+1}(X)=\cdots=\sigma_{\kappa}(X)=\cdots=\sigma_n(X)=0,\\
 		\sigma_{\alpha}(\Gamma)=e_{\alpha}\ \ {\rm and} \ \ {\textstyle\sum_{i\in \beta}}\sigma_{i}(\Gamma)\leq\kappa-\kappa_0\ \ {\rm with}\ \ 0\leq \sigma_{\beta}(\Gamma)\leq e_{\beta},\quad
 		\end{align*}
 		where $\alpha:=[\kappa_0]=\bigcup_{l=1}^{r-1}a_{l}$ and $\beta:=\{\kappa_0\!+1,\ldots,n\}=b(X)=a_{s(X)+1}$.
 	\end{itemize}
 \end{lemma}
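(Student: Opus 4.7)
The plan is to derive Lemma \ref{KnorSubdifLemma} by combining the two results flagged immediately before its statement: Lewis's characterization of subgradients of absolutely symmetric spectral functions (\cite[Corollary 2.5]{Lewis95}) and Wu's description of the subdifferential of the top-$\kappa$ sum at a sorted non-negative vector (\cite[Lemma 2.3]{Wu14}). No new analytic ingredient is needed; the work consists in matching these two characterizations to the equal-value block decomposition $\{a_l(X)\}_{l=1}^{s(X)+1}$ and reading off the two cases.

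First I would record the reduction already set up in the paragraph before the statement: since $\Psi_\kappa = h_\kappa\circ\sigma$ with $h_\kappa$ proper, lsc, convex and absolutely symmetric, Lewis's theorem gives $\Gamma\in\partial\Psi_\kappa(X)$ if and only if $\sigma(\Gamma)\in\partial h_\kappa(\sigma(X))$ together with the existence of $(U,V)\in\mathbb{O}^{n,m}(X)\cap\mathbb{O}^{n,m}(\Gamma)$ yielding the simultaneous ordered SVD \eqref{BarXGamma-SVD}. The simultaneous SVD requirement precisely enforces compatibility between the equal-value blocks of $\sigma(X)$ and the ordering of $\sigma(\Gamma)$, which is exactly what the listed singular-value equalities in each case encode.

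Next I would invoke \cite[Lemma 2.3]{Wu14} at the point $\sigma(X)$, which is sorted and non-negative, and split according to the position of $\kappa$ relative to $s(X)$. When $r\in[s(X)]$, the index $\kappa$ lies inside a positive block $a_r$ of $\sigma(X)$, and Wu's lemma forces every $\sigma(\Gamma)\in\partial h_\kappa(\sigma(X))$ to take the value $1$ on $\alpha:=\bigcup_{l<r}a_l$, to lie in $[0,1]^{|a_r|}$ with coordinate sum exactly $\kappa-\kappa_0$ on $\beta:=a_r$, and to vanish on $\gamma:=\bigcup_{l>r}a_l$. When $r=s(X)+1$, the index $\kappa$ sits in the zero block $b(X)=a_{s(X)+1}$; on this block $\sigma(X)$ is zero, so the coordinates of $\sigma(\Gamma)$ there contribute nothing to $\langle\sigma(\Gamma),\sigma(X)\rangle$ and only the inequality $\sum_{i\in\beta}\sigma_i(\Gamma)\le\kappa-\kappa_0$ (inherited from $\|\Gamma\|_*\le\kappa$) survives in place of the equality in case (i). Transcribing these two descriptions into the notation of the statement completes both cases.

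The only delicate point is bookkeeping: matching Wu's parametrization (phrased for a generic sorted vector) to the equal-value blocks $\{a_l(X)\}$ adopted here, and verifying that in case (ii) the freedom on the zero block genuinely relaxes the coordinate-sum equality to the stated inequality. Since the identical statement is also recorded in \cite[Lemma 3]{Ding17}, a streamlined write-up need do no more than make these identifications explicit; no argument beyond the case split is required.
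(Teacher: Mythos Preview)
Your proposal is correct and follows exactly the approach the paper takes: the paper does not give a standalone proof but simply records, in the paragraph preceding the lemma, that the result follows by combining \cite[Corollary 2.5]{Lewis95} with \cite[Lemma 2.3]{Wu14} (and notes it also appears as \cite[Lemma 3]{Ding17}). Your write-up just makes the case split and the bookkeeping explicit, which is precisely what a fleshed-out version of the paper's one-line justification would look like.
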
  
 
 For any $(X,\Gamma)\in{\rm gph}\,\partial\Psi_{\!\kappa}$, define the multiplier set associated with $(X,\Gamma)$ by
 \begin{equation*}
  \Lambda(X,\Gamma)
  :=\Big\{M\in\partial\Phi_{\!\kappa}(\mathcal{B}(X))\ |\ \mathcal{B}^*(M)=\Gamma\Big\},
 \end{equation*} 
 where $\mathcal{B}^*\!:\mathbb{S}^{m+n}\to\mathbb{R}^{n\times m}$ stands for the ajoint of the linear operator $\mathcal{B}$. The following lemma provides a specific characterization for such a multiplier set.
 \begin{lemma}\label{multiplier-lemma}
 Fix any $X\in\mathbb{R}^{n\times m}$ and  $\Gamma\in\partial\Psi_{\!\kappa}(X)$. Pick any $(U,V)\in\mathbb{O}^{n,m}(X)$, and let $P$ and $P_0$ be the matrices defined by \eqref{Pmatrix} with such $(U,V)$ and the index sets $a=a(X)$, $b=b(X)$ and $c$ from \eqref{abc}. Let $a_l=a_l(X)$ for each $l\in[ s(X)+1]$ with $a_l(X)$ defined by \eqref{al}, and let $r\in[ s(X)\!+\!1]$ be the integer such that $\kappa\in a_{r}$. Then, $M\in \Lambda(X,\Gamma)$ iff $M=\frac{1}{2}\begin{pmatrix}
 2M_{11} & \Gamma\\
 \Gamma^{\top}&2M_{22}
 \end{pmatrix}$ with $M_{11}\in\mathbb{S}^{n}$ and $M_{22}\in\mathbb{S}^m$ and there exists $\xi\in\Omega_{r}$ such that
 \begin{align}\label{aim-equa}
 M-{\textstyle\sum_{l=1}^{r-1}} P_{\!a_l} P^{\top}_{\!a_l}=\left\{\begin{array}{cl} P_{\!a_{r}}{\rm Diag}(\xi) P^{\top}_{\!a_{r}} &{\rm if}\ r\in [s(X)], \\
 P_{0}{\rm Diag}(\xi) P^{\top}_{0} & {\rm if}\ r= s(X)\!+\!1,
 \end{array}\right.
 \end{align}
 where $\Omega_{r}=\big\{z\in \mathbb{R}^{|a_{r}|}\ |\ 0\leq z_i\leq 1\ {\rm for\ each}\ i\in [|a_{r}|]\ {\rm and}\  \sum_{i=1}^{|a_{r}|}z_i=l_{\kappa}(\mathcal{B}(X)) \big\}$.
 When $r\in[s(X)]$, the above equality \eqref{aim-equa} can equivalently be written as 
 \begin{subnumcases}{}
  U_{\!a_{r}}{\rm Diag}(\xi)U_{\!a_{r}}^{\top}
  =2M_{11}-{\textstyle\sum_{l=1}^{r-1}}\,U_{\!a_l}U_{\!a_l}^{\top},\nonumber\\
  V_{\!a_{r}}{\rm Diag}(\xi)V^{\top}_{\!a_{r}}
   =2M_{22}-{\textstyle\sum_{l=1}^{r-1}}\,V_{\!a_l}V_{\!a_l}^{\top},\nonumber\\
  U_{\!a_{r}}{\rm Diag}(\xi)V^{\top}_{\!a_{r}}=\Gamma-{\textstyle\sum_{l=1}^{r-1}}\,U_{\!a_l}V_{\!a_l}^{\top};\nonumber
  \end{subnumcases}
 when $r= s(X)\!+\!1$, by writing $\xi:=(\xi_1;\xi_2;\xi_3)$ with $\xi_1,\xi_3\in \mathbb{R}^{|b|}$ and $\xi_2\in \mathbb{R}^{|c|}$, the above equality \eqref{aim-equa} can equivalently be written as 
 \begin{subnumcases}{}
 U_{b}{\rm Diag}(\xi_1+\xi_3)U^{\top}_{\!b}
 = 2M_{11}-{\textstyle\sum_{l=1}^{r-1}}\,U_{\!a_l}U_{\!a_l}^{\top},\nonumber\\ V_{b}{\rm Diag}(\xi_1+\xi_3)V^{\top}_{b}+ 2 V_{\!c}{\rm Diag}(\xi_2)V_{\!c}^\top
 =2M_{22}-{\textstyle\sum_{l=1}^{r-1}}V_{\!a_l}V_{\!a_l}^{\top},\nonumber\\
 U_{b}{\rm Diag}(\xi_1-\xi_3)V^{\top}_{b}=\Gamma-{\textstyle\sum_{l=1}^{r-1}}\,U_{\!a_l}V_{\!a_l}^{\top}.\nonumber
 \end{subnumcases}
 \end{lemma}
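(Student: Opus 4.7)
The plan is to obtain the characterization by specializing Lemma \ref{Phik-lemma}(i) to $Z=\mathcal{B}(X)$, computing the adjoint $\mathcal{B}^{*}$, and then performing a block-by-block expansion under the explicit form of $P$ given in \eqref{Pmatrix}.

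First I would identify the eigen-decomposition of $\mathcal{B}(X)$ that underlies the matrix $P$ in \eqref{Pmatrix}. Identity \eqref{BX-svd} exhibits $P$ as an orthonormal eigenbasis, so $P\in\mathbb{O}^{n+m}(\mathcal{B}(X))$, with the columns $P_{a_l}$ (for $l\in[s(X)]$) spanning the eigenspace of the positive distinct eigenvalue $\nu_l(X)$, the columns forming $P_0$ spanning the zero-eigenspace, and the remaining columns spanning the eigenspaces of the negative distinct eigenvalues $-\nu_{s(X)}(X)<\cdots<-\nu_1(X)$. Consequently, for the $r$ with $\kappa\in a_r$, the group $\theta_r(\mathcal{B}(X))$ from \eqref{thetalDef} coincides with $a_r$ when $r\in[s(X)]$ and with the column index set of $P_0$ when $r=s(X)+1$; moreover $l_\kappa(\mathcal{B}(X))$ equals the quantity $\kappa-\kappa_0$ appearing in Lemma \ref{KnorSubdifLemma}. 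Applying Lemma \ref{Phik-lemma}(i) with $Z=\mathcal{B}(X)$ and $Q=P$ then yields exactly the identity \eqref{aim-equa} as the characterization of $M\in\partial\Phi_{\kappa}(\mathcal{B}(X))$.

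Next I would handle the affine constraint $\mathcal{B}^{*}(M)=\Gamma$. A direct computation from the defining identity $\langle\mathcal{B}(X),M\rangle=\langle X,\mathcal{B}^{*}(M)\rangle$ gives $\langle\mathcal{B}(X),M\rangle=2\langle X,M_{12}\rangle$ for any $M\in\mathbb{S}^{n+m}$ with top-right $n\times m$ block $M_{12}$, hence $\mathcal{B}^{*}(M)=2M_{12}$. Thus $\mathcal{B}^{*}(M)=\Gamma$ is equivalent to $M$ having the claimed block form with top-right block $\Gamma/2$, and combining this with the previous step delivers the first statement of the lemma.

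Finally, the two equivalent three-block formulations follow by substituting the explicit columns of $P$ from \eqref{Pmatrix} into \eqref{aim-equa} and matching the top-left, top-right and bottom-right blocks on both sides. In the case $r\in[s(X)]$, using $P_{a_l}=\tfrac{1}{\sqrt{2}}\bigl(\begin{smallmatrix}U_{a_l}\\ V_{a_l}\end{smallmatrix}\bigr)$ and similarly for $P_{a_r}$, each outer product $P_{a_l}P_{a_l}^{\top}$ and $P_{a_r}{\rm Diag}(\xi)P_{a_r}^{\top}$ splits into four $\tfrac{1}{2}$-weighted blocks, and clearing the overall $\tfrac{1}{2}$ yields the three announced equations. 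In the case $r=s(X)+1$, writing $\xi=(\xi_1;\xi_2;\xi_3)$, I would expand $P_0{\rm Diag}(\xi)P_0^{\top}$ as the sum of three column-block contributions: the two $U_b/\pm V_b$ blocks each carry the prefactor $\tfrac{1}{2}$, their $U_b$-parts add to give $\xi_1+\xi_3$ while the $\pm V_b$ asymmetry produces $\xi_1-\xi_3$ in the top-right off-diagonal block; the middle $V_c$-column block has no $\tfrac{1}{2}$ prefactor, since the $\sqrt{2}$ in its definition cancels the $\tfrac{1}{\sqrt{2}}$ normalization, so after clearing the $\tfrac{1}{2}$ from the left-hand side it appears as the summand $2V_c{\rm Diag}(\xi_2)V_c^{\top}$ on the bottom-right. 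The only real obstacle is this bookkeeping in the second case; once the $\pm V_b$ signs and the asymmetric treatment of the $V_c$-block are correctly placed, the announced system is recovered.
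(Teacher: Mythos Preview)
Your proposal is correct and follows essentially the same approach as the paper: both compute $\mathcal{B}^{*}(M)=2M_{12}$, invoke Lemma~\ref{Phik-lemma}(i) with $Z=\mathcal{B}(X)$ and $Q=P$ to obtain \eqref{aim-equa}, and then read off the block equations from the explicit form of $P$ and $P_0$ in \eqref{Pmatrix}. Your write-up actually supplies more detail on the block expansion than the paper, which simply declares the second part ``immediate''; the only difference is that the paper handles $\mathcal{B}^{*}$ first and Lemma~\ref{Phik-lemma}(i) second, whereas you reverse the order, which is immaterial.
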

 \begin{proof}
 From the definition of the linear mapping $\mathcal{B}$ in \eqref{Bmap}, it is not difficult to obtain 
 \[
   \mathcal{B}^*(M)=2M_{12}\ \ {\rm for}\ M=\begin{pmatrix}
   	M_{11} & M_{12}\\
   	M_{12}^{\top}&M_{22}
   \end{pmatrix}\ {\rm with}\ M_{11}\in\mathbb{S}^n,M_{22}\in\mathbb{S}^m,M_{12}\in\mathbb{R}^{n\times m}.
 \]
 Together with the defintion of $\Lambda(X,\Gamma)$, $M\in\Lambda(X,\Gamma)$ if and only if $M\in\partial \Phi_{\kappa}(\mathcal{B}(X))$ with	
 $M=\begin{pmatrix}
 M_{11} & \Gamma/2\\
  (\Gamma/2)^{\top}&M_{22}
 \end{pmatrix}$ for $M_{11}\!\in\!\mathbb{S}^{n}$ and $M_{22}\in\mathbb{S}^m$. By Lemma \ref{Phik-lemma} (i), $M\in\partial\Phi_{\kappa}(\mathcal{B}(X))$ iff there exists $\xi\in\Omega_{r}$ such that \eqref{aim-equa} holds. The first part then follows. The second part is immediate by using equality \eqref{aim-equa} and the expressions of $P$ and $P_0$ in \eqref{Pmatrix}. 
 \end{proof}

 Now we are ready to give a compact expression of the second subderivative of $\Psi_{\!\kappa}$.
 \begin{proposition}\label{tepi-diff}
  Fix any $X\!\in\mathbb{R}^{n\times m}$ and  $\Gamma\!\in\partial\Psi_{\kappa}(X)$. Pick any $(U,V)\in\mathbb{O}^{n,m}(X)$, and let $P$ be defined by \eqref{Pmatrix} with such $(U,V)$ and the index sets $a=a(X),b=b(X)$ and $c$ from \eqref{abc}. Let $a_l=a_l(X)$ for each $l\in[s(X)+1]$ with $a_{l}(X)$ defined by \eqref{al}, and let $r\in[ s(X)+1]$ be the integer such that $\kappa\in a_{r}$. For each $l\in[ s(X)]$, define
  \begin{equation}\label{Xial} \Xi_{a_l}(X,\cdot):=2P^{\top}_{\!a_{l}}\mathcal{B}(\cdot)(\nu_{l}(X)I_{n+m}\!-\!\mathcal{B}(X))^{\dagger}\mathcal{B}(\cdot)P_{\!a_{l}}.
  \end{equation}  
  Then, $\Psi_{\kappa}$ is properly twice epi-differentiable at $X$ for $\Gamma$ 
  with ${\rm dom\,}d^2\Psi_{\kappa}(X|\Gamma)=\mathcal{C}_{\Psi_{\kappa}}(X,\Gamma)$, and moreover, for any $G\in \mathcal{C}_{\Psi_{\kappa}}(X,\Gamma)$, when $r\in[ s(X)]$, 
 \begin{equation}\label{KNormSubDer1}
  \!d^2\Psi_{\kappa}(X|\Gamma)(G)
  =\!\sum_{l=1}^{r-1}{\rm tr}\big(\Xi_{a_{l}}(X,G)\big)+\Big\langle U_{\!a_{r}}^{\top}\Big(\Gamma-\!\sum_{l=1}^{r-1} U_{\!a_l}V^{\top}_{\!a_l}\Big)V_{\!a_{r}},\Xi_{a_{r}}(X,G)\Big\rangle,
 \end{equation}  
 and when $r= s(X)+1$, with $\Sigma_a={\rm Diag}(\sigma_{a(X)}(X))$ it holds
 \begin{equation}\label{KNormSubDer2}
  d^2\Psi_{\kappa}(X|\Gamma)(G)
  =\!\sum_{l=1}^{r-1}{\rm tr}\big(\Xi_{a_{l}}(X,G)\big)-
  2\Big\langle\Gamma-\sum_{l=1}^{r-1}U_{\!a_l}V_{\!a_l}^\top,GV_{\!a} \Sigma_a^{-1}U^\top_{\!a}G\Big\rangle.
 \end{equation}
 \end{proposition}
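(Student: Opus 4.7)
The plan is to apply the chain rule for the second subderivative from \cite[Theorem 5.4]{MohMSSiam2020} to the composition $\Psi_{\!\kappa}=\Phi_{\kappa}\circ\mathcal{B}$. Since $\mathcal{B}$ is a linear injection with $\mathcal{B}^{*}\mathcal{B}=2I$, the metric-regularity/nondegeneracy hypothesis of that chain rule is automatic. Combined with the proper twice epi-differentiability and (implicit) parabolic regularity of $\Phi_{\kappa}$ at $\mathcal{B}(X)$ supplied by Lemma~\ref{Phik-lemma}(iii), this should yield both the proper twice epi-differentiability of $\Psi_{\!\kappa}$ at $X$ for $\Gamma$ and the representation
\[
d^{2}\Psi_{\!\kappa}(X|\Gamma)(G)=\sup_{M\in\Lambda(X,\Gamma)}d^{2}\Phi_{\kappa}(\mathcal{B}(X)|M)(\mathcal{B}(G)),
\]
with ${\rm dom}\,d^{2}\Psi_{\!\kappa}(X|\Gamma)=\mathcal{C}_{\Psi_{\!\kappa}}(X,\Gamma)$ inherited from the corresponding domain property of $\Phi_{\kappa}$.

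Next I would exploit the spectral structure \eqref{BX-svd} of $\mathcal{B}(X)$: its distinct positive eigenvalues are exactly $\nu_{1}(X)>\cdots>\nu_{s(X)}(X)$, zero is an eigenvalue of multiplicity $2|b|+|c|$, and the negative eigenvalues are their reflections. Thus one may take $Q=P$ in Lemma~\ref{Phik-lemma}(iii), with the eigenspace of $\nu_{l}(X)$ spanned by $P_{\!a_{l}}$ for $l\in[s(X)]$ and that of $0$ spanned by $P_{0}$. Writing $M\in\Lambda(X,\Gamma)$ in the form given by Lemma~\ref{multiplier-lemma}, identifying $\widetilde{M}=P_{\!a_{r}}{\rm Diag}(\xi)P_{\!a_{r}}^{\top}$ (or $P_{0}{\rm Diag}(\xi)P_{0}^{\top}$ when $r=s(X)+1$), and substituting $H=\mathcal{B}(G)$ into Lemma~\ref{Phik-lemma}(iii), the expression for $d^{2}\Phi_{\kappa}(\mathcal{B}(X)|M)(\mathcal{B}(G))$ collapses via the abbreviation $\Xi_{a_{l}}(X,G)$ into $\sum_{l<r}{\rm tr}(\Xi_{a_{l}}(X,G))+\langle{\rm Diag}(\xi),\Xi_{a_{r}}(X,G)\rangle$ in the case $r\in[s(X)]$, and into an analogous expression involving $P_{0}$ in the case $r=s(X)+1$.

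For the case $r\in[s(X)]$, the identity $U_{\!a_{r}}{\rm Diag}(\xi)V_{\!a_{r}}^{\top}=\Gamma-\sum_{l<r}U_{\!a_{l}}V_{\!a_{l}}^{\top}$ from Lemma~\ref{multiplier-lemma} uniquely pins down $\xi$ through $U_{\!a_{r}}^{\top}(\Gamma-\sum_{l<r}U_{\!a_{l}}V_{\!a_{l}}^{\top})V_{\!a_{r}}={\rm Diag}(\xi)$, so the supremum over $\Lambda(X,\Gamma)$ becomes superfluous and \eqref{KNormSubDer1} follows directly. For $r=s(X)+1$, however, the analogous constraint $U_{b}{\rm Diag}(\xi_{1}-\xi_{3})V_{b}^{\top}=\Gamma-\sum_{l<r}U_{\!a_{l}}V_{\!a_{l}}^{\top}$ leaves residual freedom in the components of $\xi$; the residual supremum over $\Omega_{r}$ must then be evaluated, and I expect to reduce it using the identity $(0\cdot I_{n+m}-\mathcal{B}(X))^{\dagger}=-\mathcal{B}(X)^{\dagger}$ together with the block decomposition $\mathcal{B}(X)^{\dagger}=\frac{1}{2}\mathcal{B}(U_{\!a}\Sigma_{a}^{-1}V_{\!a}^{\top})$. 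Careful bookkeeping of the cross-blocks between $P_{0}$ and the $\pm\nu_{l}(X)$-eigenspaces in $\mathcal{B}(G)$ should then deliver the second term $-2\langle\Gamma-\sum_{l<r}U_{\!a_{l}}V_{\!a_{l}}^{\top},GV_{\!a}\Sigma_{a}^{-1}U_{\!a}^{\top}G\rangle$ appearing in \eqref{KNormSubDer2}.

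The main obstacle I anticipate is precisely this second case: verifying that only the mixed blocks of $\mathcal{B}(G)$ between the zero and nonzero eigenspaces contribute to the pseudoinverse-sandwiched term, checking that the residual maximization over the admissible $\xi\in\Omega_{r}$ is attained and yields the clean closed form rather than a genuine supremum, and confirming that the resulting expression depends only on $G$ and $\Gamma$ (not on the particular SVD $(U,V)$). The remainder of the proof is algebraic bookkeeping with the block structure of $P$ and $P_{0}$ defined in \eqref{Pmatrix}.
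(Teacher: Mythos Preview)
Your plan is correct and matches the paper's proof: invoke the chain rule \cite[Theorem~5.4]{MohMSSiam2020} (together with the parabolic epi-differentiability and parabolic regularity of $\Phi_{\kappa}$) to get both the proper twice epi-differentiability and the representation $d^{2}\Psi_{\kappa}(X|\Gamma)(G)=\max_{M\in\Lambda(X,\Gamma)}d^{2}\Phi_{\kappa}(\mathcal{B}(X)|M)(\mathcal{B}(G))$, then evaluate via Lemma~\ref{Phik-lemma}(iii) with $Q=P$ and the parametrization of $\Lambda(X,\Gamma)$ from Lemma~\ref{multiplier-lemma}.

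Two remarks to smooth your execution. First, do not forget to verify the hypothesis of Lemma~\ref{Phik-lemma}(iii) for $H=\mathcal{B}(G)$: the paper does this by unwinding $G\in\mathcal{C}_{\Psi_{\kappa}}(X,\Gamma)$ through $\langle\Gamma,G\rangle=d\Psi_{\kappa}(X)(G)=d\Phi_{\kappa}(\mathcal{B}(X))(\mathcal{B}(G))$ and $\langle M,\mathcal{B}(G)\rangle=\langle\Gamma,G\rangle$, which reduces exactly to the required identity $\Phi_{l_{\kappa}(Z)}(P_{\!a_{r}}^{\top}\mathcal{B}(G)P_{\!a_{r}})=\langle\widetilde{S},\mathcal{B}(G)\rangle$. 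Second, your ``residual supremum'' in the case $r=s(X)+1$ is in fact vacuous: expanding $P_{0}^{\top}\mathcal{B}(G)\mathcal{B}(X)^{\dagger}\mathcal{B}(G)P_{0}$ blockwise, the $\xi_{2}$-diagonal block is identically zero (the middle column of $P_{0}$ has zero top part), and the $\xi_{1}$- and $\xi_{3}$-diagonal blocks are negatives of one another, so the inner product with ${\rm Diag}(\xi)$ depends only on $\xi_{1}-\xi_{3}$, which is pinned down by $U_{b}{\rm Diag}(\xi_{1}-\xi_{3})V_{b}^{\top}=\Gamma-\sum_{l<r}U_{\!a_{l}}V_{\!a_{l}}^{\top}$. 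Hence every $M\in\Lambda(X,\Gamma)$ yields the same value of $d^{2}\Phi_{\kappa}(\mathcal{B}(X)|M)(\mathcal{B}(G))$, and the maximum is attained trivially.
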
 
 \begin{proof}
By \cite[Proposition 2.2]{Torki01}, the function $\Phi_{\kappa}$ is parabolically epi-differentiable at any $Z$ for any $H\in \mathbb{S}^{n+m}$. Note that the constraint system $\mathcal{B}(\cdot)\in{\rm dom}\,\Phi_{\kappa}$ satisfies the metric subregularity constraint qualification at any feasible point. From \cite[Proposition 5.1]{MohMSSiam2020}, $d^2\Psi_{\kappa}(X|\Gamma)$ is a proper lsc function with
 ${\rm dom}\,d^2\Psi_{\kappa}(X|\Gamma)=\mathcal{C}_{\Psi_{\kappa}}(X,\Gamma)$. In addition, from the proof of \cite[Theorem 2.4]{Torki99} (see also \cite[Example 3.3]{MohMSSiam2020}), the function $\Phi_{\kappa}$ is parabolically regular at any $Z\in\mathbb{S}^{n+m}$ for every $W\in\partial\Phi_{\kappa}(Z)$. Invoking \cite[Corollary 5.5]{MohMSSiam2020} with $g=\Phi_{\kappa}$ and $F(\cdot)=\mathcal{B}(\cdot)$ leads to the twice epi-differentiability $\Psi_{\kappa}$ at $X$ for $\Gamma$. 
 Thus, the rest only needs to achieve the expression of $d^2\Psi_{\kappa}(X|\Gamma)(G)$ for $G\in\mathcal{C}_{\Psi_{\kappa}}(X,\Gamma)$.   
 Pick any $G\in\mathcal{C}_{\Psi_{\kappa}}(X,\Gamma)$. Using \cite[Theorem 5.4]{MohMSSiam2020} with $g=\Phi_{\kappa}$ and $F(\cdot)=\mathcal{B}(\cdot)$ leads to
 \begin{equation}\label{d2NuNorm}
 d^2\Psi_{\kappa}(X|\Gamma)(G)
 \!=\!\max_{M'\in\Lambda(X,\Gamma)}d^2 \Phi_{\kappa}(Z|M')(\mathcal{B}(G))\ \ {\rm with}\ Z=\mathcal{B}(X).
 \end{equation}
 Let $M\in\Lambda(X,\Gamma)$ be an optimal solution of the maximum problem in \eqref{d2NuNorm}. Then, 
 \begin{equation}\label{d2Phik}
  d^2\Psi_{\kappa}(X|\Gamma)(G)
  =d^2 \Phi_{\kappa}(Z|M)(\mathcal{B}(G)).
 \end{equation}
 {\bf Case 1: $r\in[ s(X)]$.}
 Since $M\in\Lambda(X|\Gamma)$, by Lemma \ref{multiplier-lemma}, there exists $\xi\in\Omega_{r}$ such that 
 \begin{equation}\label{temp-equa41}
 P_{\!a_{r}}{\rm Diag}(\xi)P^{\top}_{\!a_{r}}=M-\sum_{l=1}^{r-1}P_{\!a_l} P^{\top}_{\!a_l}\ \ {\rm and}\ \ U_{\!a_{r}}{\rm Diag}(\xi)V^{\top}_{\!a_{r}}=\Gamma-\sum_{l=1}^{r-1}U_{\!a_l}V_{\!a_l}^{\top}. 
 \end{equation}
 From $\langle\Gamma,G\rangle 
 =d\Psi_{\kappa}(X)(G)$ and the second equality in \eqref{temp-equa41}, it follows that 
 \begin{align}\label{temp-equa42}
 \Big\langle\sum_{l=1}^{r-1}U_{\!a_l} V^{\top}_{\!a_l}+U_{\!a_{r}}{\rm Diag}(\xi) V^{\top}_{\!a_{r}},G\Big\rangle 
 &\!=\!\langle\Gamma,G\rangle\!=\!d\Psi_{\kappa}(X)(G)\!=\! d\Phi_{\kappa}(\mathcal{B}(X))(\mathcal{B}(G))\nonumber\\
 &=\sum_{l=1}^{r-1}{\rm tr}(P^{\top}_{\!a_{l}}\mathcal{B}(G) P_{\!a_{l}})\!+\!\Phi_{l_{\kappa}(Z)}(P^{\top}_{\!a_{r}}\mathcal{B}(G)P_{\!a_{r}}),
 \end{align}
 where the third equality is obtained by using \cite[Proposition 4.3]{MohMSSiam2020} and the last one is due to Lemma \ref{Phik-lemma} (ii). By the expression of $P$ in \eqref{Pmatrix}, $\langle \sum_{l=1}^{r-1}U_{\!a_l} V^{\top}_{\!a_l},G\rangle=\sum_{l=1}^{r-1}{\rm tr}(P^{\top}_{\!a_{l}}\mathcal{B}(G)P_{\!a_{l}})$ and $\langle U_{\!a_{r}}{\rm Diag}(\xi) V^{\top}_{\!a_{r}},G\rangle=\langle P_{\!a_{r}}{\rm Diag}(\xi)P^{\top}_{\!a_{r}}, \mathcal{B}(G) \rangle$. Then equation \eqref{temp-equa42} is equivalent to   
 \begin{equation*}
 \Phi_{l_k(Z)}(P^{\top}_{\!a_{r}}\mathcal{B}(G)P_{\!a_{r}})=\langle P_{\!a_{r}}{\rm Diag}(\xi)P^{\top}_{\!a_{r}}, \mathcal{B}(G) \rangle.
 \end{equation*}
 Together with the first equality of \eqref{temp-equa41}, we conclude that $H=\mathcal{B}(G)$ satisfies the condition of  Lemma \ref{Phik-lemma} (iii) with $Q=P$ and $\widetilde{S}=P_{\!a_{r}}{\rm Diag}(\xi)P^{\top}_{\!a_{r}}$. Then, by noting that $\mu_{l}(Z)$ for each $l\in[ s(X)]$ is precisely $\nu_{l}(X)$, from Lemma \ref{Phik-lemma} (iii) it follows   
 \begin{align*}
  d^2 \Phi_{\kappa}(Z|M)(\mathcal{B}(G))
  &=\sum_{l=1}^{r-1}{\rm tr}(2 P^{\top}_{\!a_{l}}\mathcal{B}(G)(\nu_{l}(X)I_{n+m}\!-\!\mathcal{B}(X))^{\dagger}\mathcal{B}(G)P_{\!a_{l}})\nonumber\\
 &\quad+\langle{\rm Diag}(\xi),2P_{\!a_{r}}^{\top}\mathcal{B}(G)(\nu_{r}(X)I_{n+m}\!-\!\mathcal{B}(X))^{\dagger}\mathcal{B}(G)P_{\!a_{r}}\rangle.
 \end{align*}
 Note that ${\rm Diag}(\xi)=U_{\!a_{r}}^{\top}(\Gamma-\!\sum_{l=1}^{r-1}U_{\!a_l}V_{\!a_l}^{\top})V_{\!a_{r}}$ by the second equality of \eqref{temp-equa41}. Along with the above equality and equation \eqref{d2Phik}, we obtain the desired result.  

 \noindent
 {\bf Case 2: $r= s(X)+\!1$.} Since $M\in\Lambda(X,\Gamma)$, according to Lemma \ref{multiplier-lemma}, there exists a vector $\xi=(\xi_1;\xi_2;\xi_3)\in\Omega_{r}$ with $\xi_1,\xi_3\in \mathbb{R}^{|b|}$ and $\xi_2\in \mathbb{R}^{|c|}$ such that 
 \begin{equation}\label{MstruCase2}
 P_{0}{\rm Diag}(\xi)P^{\top}_{0}=M-\sum_{l=1}^{r-1}\!P_{\!a_l}P^{\top}_{\!a_l}\ \ {\rm and} \ \ 
 U_{b}{\rm Diag}(\xi_1\!-\!\xi_3)V^{\top}_{\!b}=\Gamma-\!\sum_{l=1}^{r-1}U_{\!a_l}V_{\!a_l}^{\top}.
 \end{equation}
 Using the second equality in \eqref{MstruCase2} and the similar arguments as above leads to
 \begin{equation}\label{Case2temp-equa42}
 \Big\langle\sum_{l=1}^{r-1}U_{\!a_l} V^{\top}_{\!a_l}\!+U_{b}{\rm Diag}(\xi_1\!-\!\xi_3)V^{\top}_{b},G\Big\rangle=\!\sum_{l=1}^{r-1}{\rm tr}(P^{\top}_{\!a_{l}}\mathcal{B}(G) P_{\!a_{l}})\!+\Phi_{l_{\kappa}(Z)}(P_{\!a_{r}}^{\top}\mathcal{B}(G)P_{\!a_{r}}).
 \end{equation}
 By the expressions of $P$ and $P_0$ in equation \eqref{Pmatrix}, $\langle \sum_{l=1}^{r-1}U_{\!a_l} V^{\top}_{\!a_l},G\rangle=\sum_{l=1}^{r-1}{\rm tr}(P^{\top}_{\!a_{l}}\mathcal{B}(G)P_{\!a_{l}})$ and $\langle U_{b}{\rm Diag}(\xi_1\!-\!\xi_3)V^{\top}_{\!b},G\rangle=\langle P_{0}{\rm Diag}(\xi)P^{\top}_{0}, \mathcal{B}(G) \rangle$. Then equation \eqref{Case2temp-equa42} is equivalent to   
 \begin{equation*}
 \Phi_{l_{\kappa}(Z)}(P_{\!a_{r}}^{\top}\mathcal{B}(G)P_{\!a_{r}})=\langle P_{0}{\rm Diag}(\xi)P_{0}^{\top}, \mathcal{B}(G)\rangle.
 \end{equation*}
 Along with the first equality in \eqref{MstruCase2}, we conclude that $H=\mathcal{B}(G)$ satisfies the condition of Lemma \ref{Phik-lemma} (iii) with $Q=P$ and $\widetilde{S}=P_{0}{\rm Diag}(\xi)P^{\top}_{0}$. Then, by noting that $\mu_{l}(Z)$ for each $l\in[s(X)\!+1]$ is precisely $\nu_{l}(X)$, from Lemma \ref{Phik-lemma} (iii) it follows  
 \begin{align*}
 d^2\Phi_{\kappa}(Z|M)(\mathcal{B}(G))&\!=\!\sum_{l=1}^{r-1}{\rm tr}(2 P^{\top}_{\!a_{l}}\mathcal{B}(G)(\nu_{l}(X)I_{n+m}-\mathcal{B}(X))^{\dagger}\mathcal{B}(G)P_{\!a_{l}})\\
 &\quad-\langle {\rm Diag}(\xi),P_{0}^{\top}\mathcal{B}(G)(\mathcal{B}(X))^{\dagger}\mathcal{B}(G)P_{0}\rangle.
 \end{align*}
 Substituting the expression of $P_0$ into the second term on the right hand side, we have
 \begin{align*}
  &\langle {\rm Diag}(\xi),P^{\top}_{0}\mathcal{B}(G)(\mathcal{B}(X))^{\dagger}\mathcal{B}(G)P_{0}\rangle \nonumber\\
  &=\langle {\rm Diag}(\xi_1-\xi_3),U_{\!b}^\top GV_{\!a}\Sigma_a^{-1}U^\top_{\!a} G V_{\!b}+V^\top_{\!b} G^\top U_a \Sigma_a^{-1}V^\top_{\!a} G^\top U_{\!b} \rangle\\
  &=2 \langle U_{b} {\rm Diag}(\xi_1-\xi_3)V^\top_{b}, GV_{\!a} \Sigma_a^{-1}U^\top_{\!a} G \rangle
 =2\Big\langle\Gamma-\sum_{l=1}^{r-1}U_{\!a_l}V_{\!a_l}^\top,G V_{\!a}\Sigma_a^{-1}U^\top_{\!a} G\Big\rangle
 \end{align*}
 where the last equality is due to the second equality in \eqref{MstruCase2}. 
 Combining the above two equations with \eqref{d2Phik} yields the desired result. The proof is completed.
 \end{proof}

When $(U,V)\in\mathbb{O}^{n,m}(X)$ in Proposition \ref{tepi-diff} also satisfies $(U,V)\in\mathbb{O}^{n,m}(\Gamma)$ (see \cite[Corollary 2.5]{Lewis95} for the  existence of such $(U,V)$), the second subderivative of $\Psi_{\kappa}$ has the expression as stated in the following proposition.
 \begin{proposition}\label{MainTh2}
 Fix any $X\in\mathbb{R}^{n\times m}$ and $\Gamma\in\partial\Psi_{\kappa}(X)$. Pick any $(U,V)\in\mathbb{O}^{n,m}(X)\cap\mathbb{O}^{n,m}(\Gamma)$ with $V=[V_{1}\ \ V_{\!c}]$ for $V_{1}\in\mathbb{O}^{m\times n}$. Let $P$ be the  matrix defined by \eqref{Pmatrix} with such $(U,V)$ and $a=a(X),b=b(X)$ and $c$ from \eqref{abc}, let $a_l=a_l(X)$ for each $l\in[ s(X)+1]$ with $a_{l}(X)$ defined by \eqref{al}, and let $r\in[ s(X)\!+\!1]$ be the integer such that $\kappa\in a_{r}$. Let $\zeta_1(\Gamma)>\cdots>\zeta_{q}(\Gamma)$ be the nonzero distinct entries of the set $\big\{\sigma_i(\Gamma)\ |\ i\in a_{r}\big\}$, and write
 \begin{equation}\label{beta_l}
 \beta_l(\Gamma)\!:=\big\{i\in a_{r}\,|\,\sigma_i(\Gamma)=\zeta_{l}(\Gamma)\big\}\ \ {\rm for}\ l\in[q]\ \ {\rm and}\ \  \beta_{0}(\Gamma)\!:=\big\{i\in a_{r}\,|\,\sigma_i(\Gamma)=0\big\}.
 \end{equation}
 Then, for any $G\in\mathcal{C}_{\Psi_{\kappa}}(X,\Gamma)$, 
 when $r\in[ s(X)]$,  $d^2\Psi_{\kappa}(X|\Gamma)(G)$ is equal to  
 \begin{align*}
 &\sum_{l=1}^{r-1}\sum_{l'=r+1}^{ s(X)+1}
 \frac{2\|[\mathcal{S}(U^{\top}\!G V_{1})]_{a_{l}a_{l'}}\|_F^2}{\nu_l(X)\!-\!\nu_{l'}(X)}+\sum_{l=1}^{r-1}\sum_{j=1}^{q}
 \frac{2(1\!-\!\zeta_j(\Gamma))}{\nu_l(X)\!-\!\nu_{r}(X)}\| [\mathcal{S}(U^{\top}\!G V_{1})]_{a_{l}\beta_{j}}\|_F^2\nonumber\\
 & +\sum_{j=1}^{q}\sum_{l'=r+1}^{ s(X)+1}\frac{2\zeta_{j}(\Gamma)\| [\mathcal{S}(\overline{U}^{\top}\! G\overline{V}_{1})]_{\beta_{j}a_{l'}}\|_F^2}{\nu_{r}(X)\!-\!\nu_{l'}(X)}+\sum_{l=1}^{r-1}\sum_{l'=1}^{ s(X)+1}\frac{2\|[\mathcal{T}(U^{\top}\!G V_{1})]_{a_{l}a_{l'}}\|_F^2}{\nu_l(X)\!+\!\nu_{l'}(X)} \nonumber\\
 & +\sum_{j=1}^{q}\sum_{l'=1}^{ s(X)+1}
 	\frac{2\zeta_{j}(\Gamma)\|[\mathcal{T}(U^{\top}\! GV_{1})]_{\beta_ja_{l'}}\|_F^2}{\nu_{r}(X)+\nu_{l'}(X)}+\sum_{j=1}^{q}\frac{\zeta_{j}(\Gamma)}{\nu_{r}(X)}\|U^\top_{\!\beta_j}GV_{c}\|_F^2
 	\nonumber\\
 &+\sum_{l=1}^{r-1}\frac{\|U_{\!a_l}^\top GV_{\!c}\|_F^2}{\nu_l(X)}+\sum_{l=1}^{r-1}
 	\frac{2\|[\mathcal{S}(U^{\top}\!G V_{1})]_{a_{l}\beta_{0}}\|_F^2}{\nu_l(X)\!-\!\nu_{r}(X)};
 \end{align*} 
 when $r= s(X)\!+\!1$, $d^2\Psi_{\kappa}(X|\Gamma)(G)$ is equal to  the following sum
 \begin{align*}
 &\sum_{l=1}^{r-1}\sum_{j=1}^{q}\frac{2(1\!-\!\zeta_{j}(\Gamma))}{\nu_l(X)}\| [\mathcal{S}(U^{\top}\!GV_{1})]_{a_{l}\beta_{j}}\|_F^2+\sum_{l=1}^{r-1}\sum_{l'=1}^{r-1}
 \frac{2\|[\mathcal{T}(U^{\top}\!GV_{1})]_{a_{l}a_{l'}}\|_F^2}{\nu_l(X)\!+\!\nu_{l'}(X)}\nonumber\\
 &+\!\sum_{l=1}^{r-1}\!\frac{2\| [\mathcal{S}(U^{\top}\!GV_{\!1})]_{a_{l}\beta_{0}}\|_F^2}{\nu_l(X)}+\!\sum_{l=1}^{r-1}\frac{\|U^\top_{a_l}GV_{c}\|_F^2}{\nu_l(X)}+\!\sum_{l=1}^{r-1}\sum_{j=0}^{q}
 \frac{2(1\!+\!\zeta_{j}(\Gamma))}{\nu_l(X)}\|[\mathcal{T}(U^{\top}\!GV_{1})]_{a_{l}\beta_{j}}\|_F^2. 	
 \end{align*} 
\end{proposition}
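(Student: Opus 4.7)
The plan is to start from Proposition \ref{tepi-diff} and unfold each term by the spectral decomposition of $\mathcal{B}(X)$. Since $\mathcal{B}(X)=P\,\mathrm{Diag}(\sigma(X),0,-I^{\uparrow}_{n}\sigma(X))P^{\top}$ by \eqref{BX-svd}, the pseudo-inverse $(\nu_l(X)I_{n+m}-\mathcal{B}(X))^{\dagger}$ is diagonal in the basis $P$ with entries $(\nu_l(X)-\nu_{l'}(X))^{-1}$ on each block $a_{l'}$ with $l'\ne l$, entry $\nu_l(X)^{-1}$ on the three blocks associated with eigenvalue $0$ (namely $b$, $c$ and the second copy of $b$), and $(\nu_l(X)+\nu_{l'}(X))^{-1}$ on each reflected block $-a_{l'}$. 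Writing $W:=P^{\top}\mathcal{B}(G)P$, the trace identity $\mathrm{tr}(\Xi_{a_l}(X,G))=2\sum_{i\in a_l}\sum_{k}W_{ik}^{2}/(\nu_l(X)-\Lambda_{kk})$ then reduces the whole problem to computing $W$ block-by-block.

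The first step is therefore to read the blocks of $W$ from the explicit form of $P$ in \eqref{Pmatrix}. A direct calculation shows that for indices $i,k$ lying in the ``positive'' part of $P$ (i.e.\ in $a\cup b$) one gets $W_{ik}=[\mathcal{S}(U^{\top}GV_{1})]_{ik}$; for $i$ in this region and $k$ in the second copy of $b$ or in the reflected $a$-block one gets $\pm[\mathcal{T}(U^{\top}GV_{1})]_{ik}$; and for $k$ in block $c$ one gets $W_{ik}=\tfrac{1}{\sqrt{2}}\,U_{:,i}^{\top}GV_{:,k}$. Plugging these into the trace expression and summing over $l\in[r-1]$ produces five families of terms, with denominators $\nu_l-\nu_{l'}$, $\nu_l+\nu_{l'}$, or $\nu_l$, exactly of the shape appearing in the statement. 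Here I would use the symmetry $\|[\mathcal{S}(U^{\top}GV_{1})]_{a_l a_{l'}}\|_F^{2}=\|[\mathcal{S}(U^{\top}GV_{1})]_{a_{l'} a_l}\|_F^{2}$ and the sign flip of $\nu_l-\nu_{l'}$ under $l\leftrightarrow l'$ to cancel the $\mathcal{S}$-contributions coming from pairs $(l,l')$ with both $l,l'\in[r-1]$ (they drop out by antisymmetry of the denominator). It is also convenient to adopt the convention $\nu_{s(X)+1}(X):=0$ and identify $a_{s(X)+1}$ with $b$, so that the $b$, $c$ and $-b$ contributions merge naturally into the index range $l'\in\{r+1,\dots,s(X)+1\}$.

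The second step handles the ``multiplier'' terms in Proposition \ref{tepi-diff}. The simultaneous SVD $(U,V)\in\mathbb{O}^{n,m}(X)\cap\mathbb{O}^{n,m}(\Gamma)$ gives $U_{a_r}^{\top}(\Gamma-\sum_{l<r}U_{a_l}V_{a_l}^{\top})V_{a_r}=\mathrm{Diag}(\sigma_{a_r}(\Gamma))$, which is the block-diagonal matrix equal to $\zeta_j(\Gamma)I_{|\beta_j|}$ on each $\beta_j$ and $0$ on $\beta_0$. In Case 1 ($r\in[s(X)]$) the inner product with $\Xi_{a_r}(X,G)$ then extracts precisely $\sum_{j=1}^{q}\zeta_j(\Gamma)\,\mathrm{tr}\!\left[\Xi_{a_r}(X,G)\right]_{\beta_j\beta_j}$, which by the same block computation produces $\zeta_j$-weighted copies of the same five families of terms, now with row-index $\beta_j$ and denominators involving $\nu_r(X)\pm\nu_{l'}(X)$ or $\nu_r(X)$. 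Adding these to the $\Xi_{a_l}$ contributions and regrouping yields the six sums claimed; in particular the weights $1-\zeta_j$ on the $\mathcal{S}$-terms indexed by $(a_l,\beta_j)$ arise as $1$ (from $\Xi_{a_l}$) minus $\zeta_j$ (from $-\langle\cdot,\Xi_{a_r}\rangle$ once the sign of the denominator is flipped). In Case 2 ($r=s(X)+1$) the multiplier term is instead $-2\langle\Gamma-\sum_{l<r}U_{a_l}V_{a_l}^{\top},\,GV_{a}\Sigma_{a}^{-1}U_{a}^{\top}G\rangle$; expanding $\Gamma-\sum_{l<r}U_{a_l}V_{a_l}^{\top}=\sum_{j=1}^{q}\zeta_j(\Gamma)U_{\beta_j}V_{\beta_j}^{\top}$ and $V_{a}\Sigma_{a}^{-1}U_{a}^{\top}=\sum_{l<r}\nu_l(X)^{-1}V_{a_l}U_{a_l}^{\top}$ reduces this inner product to a combination of traces $\mathrm{tr}(U_{\beta_j}^{\top}GV_{a_l}U_{a_l}^{\top}GV_{\beta_j})$. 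The identity $\|\mathcal{T}(M)_{IJ}\|_F^{2}-\|\mathcal{S}(M)_{IJ}\|_F^{2}=-\mathrm{tr}(M_{JI}M_{IJ})$ applied with $M=U^{\top}GV_{1}$, $I=a_l$, $J=\beta_j$ then converts this multiplier correction into precisely the $\zeta_j$-shift that turns the $\Xi_{a_l}$-coefficients $1$ on the $\mathcal{S}$- and $\mathcal{T}$-terms into $1-\zeta_j$ and $1+\zeta_j$.

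The bulk of the work is clerical rather than conceptual, and the main obstacle I anticipate is the sheer bookkeeping: there are five types of index blocks in $P$, each contributing in both the $\sum_{l<r}\Xi_{a_l}$ part and the multiplier part, and keeping track of signs (which determine $\mathcal{S}$ versus $\mathcal{T}$), of pseudo-inverse exclusions, and of the convention merging $b$ with $a_{s(X)+1}$ is what must be executed carefully. Once this accounting is done, the cancellation of $\mathcal{S}$-terms indexed by pairs $l,l'\in[r-1]$ and the identity relating $\|\mathcal{T}(\cdot)\|_F^{2}-\|\mathcal{S}(\cdot)\|_F^{2}$ to a trace are the only nontrivial simplifications, and both are elementary.
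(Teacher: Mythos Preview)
Your proposal is correct and follows essentially the same route as the paper: both start from Proposition~\ref{tepi-diff}, expand $\Xi_{a_l}(X,G)$ via the spectral decomposition \eqref{BX-svd} (the paper writes this expansion directly rather than through the intermediate $W=P^{\top}\mathcal{B}(G)P$, but the block identifications are identical), use the simultaneous SVD to reduce the multiplier to $\mathrm{Diag}(\sigma_{a_r}(\Gamma))$, and in Case~2 invoke the same $\|\mathcal{S}\|_F^2-\|\mathcal{T}\|_F^2$ trace identity. Your observation that the $\mathcal{S}$-terms with both indices in $[r-1]$ cancel by antisymmetry of the denominator, and that the $1-\zeta_j$ weights arise from the sign flip $\nu_r-\nu_l=-(\nu_l-\nu_r)$, are exactly the regroupings the paper performs after its equation \eqref{temp-equa31}.
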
 
\begin{proof}
 We first consider the case $r\in s(X)$. Fix any $G\!\in\mathcal{C}_{\Psi_{\!\kappa}}(X,\Gamma)$. For each $l\in[ s(X)]$, by the definition of $\Xi_{a_l}$ in \eqref{Xial} and the eigenvalue decomposition of $\mathcal{B}(X)$ in \eqref{BX-svd}, we get
 \begin{align*}
  \Xi_{a_l}(X,G)&=2[\mathcal{S}(U^{\top}\!GV_{1})]_{a_l}^\top (\nu_l(X)I_{n}-{\rm Diag}(\sigma(X)))^{\dagger}[\mathcal{S}(U^{\top}\!G {V_{1}})]_{a_l}\\
  &\quad+2[\mathcal{T}(U^{\top}\!GV_{1})]_{a_l}^\top (\nu_l(X) I_{n}+{\rm Diag}(\sigma(X)))^{\dagger} [\mathcal{T}(U^{\top}\!GV_{1})]_{a_l}\\
  &\quad+\!\frac{1}{\nu_l(X)}U_{\!a_l}^\top GV_{\!c} V_{\!c}^{\top}G^{\top}U^\top_{\!a_l},
 \end{align*}
 which implies that  
 \begin{align}\label{GammTraceTemp1}
 \sum_{l=1}^{r}{\rm tr}\big(\Xi_{a_l}(X,G)\big)
 &=\sum_{l=1}^{r}\sum_{l\ne l'=1}^{ s(X)+1}
   \frac{2\|[\mathcal{S}(U^{\top}\!G V_{1})]_{a_{l}a_{l'}}\|_F^2}{\nu_l(X)-\nu_{l'}(X)}+\sum_{l=1}^{r}\frac{1}{\nu_l(X)}\|U_{\!a_l}^\top GV_{\!c}\|_F^2\nonumber\\
 &\quad+\sum_{l=1}^{r}\sum_{l'=1}^{ s(X)+1}
	\frac{2\|[\mathcal{T}(U^{\top}\!G V_{1})]_{a_{l}a_{l'}}\|_F^2}{\nu_l(X)+\nu_{l'}(X)}.
 \end{align} 
 From $(U,V)\in\mathbb{O}^{n,m}(X)\cap\mathbb{O}^{n,m}(\Gamma)$, we have $U_{\!a_{r}}^{\top}\big(\Gamma\!-\!\sum_{l=1}^{r-1} U_{\!a_l}V^{\top}_{\!a_l}\big)V_{\!a_{r}}={\rm Diag}(\sigma_{\!a_{ r}}(\Gamma))$, so
 \begin{align*}
 &\big\langle U_{\!a_{ r}}^{\top}\big(\Gamma-{\textstyle\sum_{l=1}^{ r-1}} U_{\!a_l}V^{\top}_{\!a_l}\big)V_{\!a_{ r}}, \Xi_{a_{ r}}(X,G)\rangle\nonumber\\	
 &=2\langle{\rm Diag}(\sigma_{\!a_{ r}}(\Gamma)),  P^{\top}_{\!a_r}\mathcal{B}(G)(\nu_r(X) I_{n+m}-\mathcal{B}(X))^{\dagger}\mathcal{B}(G) P_{\!a_{ r}}\rangle \nonumber \\
 &=\sum_{j=1}^{q}\zeta_{j}(\Gamma){\rm tr}(2 P^{\top}_{\!\beta_j}\mathcal{B}(G)(\nu_r(X)I_{n+m}-\mathcal{B}(X))^{\dagger}\mathcal{B}(G)P_{\!\beta_j})\nonumber\\
 &=\sum_{j=1}^{q}\sum_{r\ne l'=1}^{ s(X)+1}
	\frac{2\zeta_j(\Gamma)\| [\mathcal{S}(U^{\top}\! GV_{1})]_{\beta_{j}a_{l'}}\|_F^2}{\nu_r(X)-\nu_{l'}(X)}
   +\sum_{j=1}^{q}\frac{\zeta_{j}(\Gamma)}{\nu_{ r}(X)}\|U^\top_{\!\beta_j}GV_{c}\|_F^2\nonumber\\
 &\quad+\sum_{j=1}^{q}\sum_{l'=1}^{ s(X)+1}
  \frac{2	\zeta_{j}(\Gamma)\|[\mathcal{T}(U^{\top}\! GV_{1})]_{\beta_ja_{l'}}\|_F^2}{\nu_r(X)+\nu_{l'}(X)}.
 \end{align*} 
 Combining the above two equations with the previous \eqref{KNormSubDer1} leads to
 \begin{align}\label{temp-equa31}
 d^2\Psi_{\kappa}(X|\Gamma)(G)
 &=\!\sum_{l=1}^{ r-1}\sum_{l\ne l'=1}^{ s(X)+1}
	\frac{2\|[\mathcal{S}(U^{\top}\!G V_{1})]_{a_{l}a_{l'}}\|_F^2}{\nu_l(X)\!-\!\nu_{l'}(X)}+\!\sum_{j=1}^{q}\sum_{r\ne l'=1}^{ s(X)+1}\frac{2\zeta_{j}(\Gamma)\| [\mathcal{S}(U^{\top}\! GV_{1})]_{\beta_{j}a_{l'}}\|_F^2}{\nu_r(X)\!-\!\nu_{l'}(X)}\nonumber\\
 &\quad +\sum_{l=1}^{ r-1}\sum_{l'=1}^{ s(X)+1}
		\frac{2\|[\mathcal{T}(U^{\top}\!G V_{\!1})]_{a_{l}a_{l'}}\|_F^2}{\nu_l(X)+\nu_{l'}(X)}+\sum_{j=1}^{q}\frac{\zeta_j(\Gamma)}{\nu_r(X)}\|U^\top_{\!\beta_j}GV_{c}\|_F^2
		\nonumber\\
 &\quad+\sum_{l=1}^{ r-1}\frac{1}{\nu_l(X)}\|U_{\!a_l}^\top GV_{c}\|_F^2+\sum_{j=1}^{q}\sum_{l'=1}^{ s(X)+1}
		\frac{2	\zeta_{j}(\Gamma)\| [\mathcal{T}(U^{\top}\! GV_{1})]_{\beta_ja_{l'}}\|_F^2}{\nu_r(X)+\nu_{l'}(X)}.
 \end{align}
 The sum of the first two terms on the right hand side of \eqref{temp-equa31} is equal to 
 \begin{align*}
 &\sum_{l=1}^{ r-1}
 \frac{2\|[\mathcal{S}(U^{\top}\!G V_{\!1})]_{a_{l}a_{ r}}\|_F^2}{\nu_l(X)-\nu_{ r}(X)}+\sum_{l=1}^{ r-1}\sum_{l'= r+1}^{ s(X)+1}
 \frac{2\|[\mathcal{S}(U^{\top}\!G V_{\!1})]_{a_{l}a_{l'}}\|_F^2}{\nu_l(X)-\nu_{l'}(X)} \\
 &-\sum_{j=1}^{q}\sum_{l'=1}^{ r-1}
 \frac{2\zeta_{j}(\Gamma)\| [\mathcal{S}(U^{\top}\! GV_{1})]_{\beta_{j}a_{l'}}\|_F^2}{\nu_{l'}(X)-\nu_{ r}(X)}+\sum_{j=1}^{q}\sum_{l'= r+1}^{
  s(X)+1}\frac{2	\zeta_j(\Gamma)\| [\mathcal{S}(U^{\top}\! GV_{\!1})]_{\beta_{j}a_{l'}}\|_F^2}{\nu_{ r}(X)-\nu_{l'}(X)}\\
 &=\sum_{l=1}^{r-1}\sum_{j=0}^{q}
 \frac{2\|[\mathcal{S}(U^{\top}\!G V_{\!1})]_{a_{l}\beta_{j}}\|_F^2}{\nu_l(X)-\nu_{r}(X)}+\sum_{l=1}^{r-1}\sum_{l'=r+1}^{s(X)+1}
 \frac{2\|[\mathcal{S}(U^{\top}\!GV_{1})]_{a_{l}a_{l'}}\|_F^2 }{\nu_l(X)-\nu_{l'}(X)}\\
 &\quad-\sum_{j=1}^{q}\sum_{l'=1}^{ r-1}
 \frac{2\zeta_{j}(\Gamma)\| [\mathcal{S}(U^{\top}\! GV_{1})]_{\beta_{j}a_{l'}}\|_F^2}{\nu_{l'}(X)-\nu_{ r}(X)}+\sum_{j=1}^{q}\sum_{l'= r+1}^{
 	s(X)+1}\frac{2	\zeta_j(\Gamma)\| [\mathcal{S}(U^{\top}\! GV_{\!1})]_{\beta_{j}a_{l'}}\|_F^2}{\nu_{ r}(X)-\nu_{l'}(X)} \\
 &=\sum_{l=1}^{r-1}\sum_{j=1}^{q}
 \frac{2(1-\zeta_j(\Gamma))\|[\mathcal{S}(U^{\top}\!G V_{1})]_{a_{l}\beta_{j}}\|_F^2}{\nu_l(X)-\nu_{r}(X)}
 +\sum_{l=1}^{r-1}\sum_{l'=r+1}^{s(X)+1}
 \frac{2\|[\mathcal{S}(U^{\top}\!GV_{1})]_{a_{l}a_{l'}}\|_F^2 }{\nu_l(X)-\nu_{l'}(X)}\\
 &\quad+\sum_{l=1}^{r-1}
 \frac{2\|[\mathcal{S}(U^{\top}\!GV_{1})]_{a_{l}\beta_{0}}\|_F^2}{\nu_l(X)-\nu_{r}(X)} 	+\sum_{j=1}^{q}\sum_{l'=r+1}^{
 	s(X)+1}\frac{2\zeta_{j}(\Gamma)\|[\mathcal{S}(U^{\top}\! GV_{1})]_{\beta_{j}a_{l'}}\|_F^2}{\nu_{r}(X)-\nu_{l'}(X)}.  
 \end{align*}
 Along with the last four terms on the right hand side of \eqref{temp-equa31}, we obtain the result. 
 
 Next we focus on the case that $ r= s(X)\!+\!1$. From $(U,V)\in\mathbb{O}^{n,m}(X)\cap\mathbb{O}^{n,m}(\Gamma)$ and Lemma \ref{KnorSubdifLemma} (ii), it follows $\Gamma-\sum_{l=1}^{r-1}U_{\!a_l}V_{\!a_l}^\top=U_{b}{\rm Diag}(\zeta(\Gamma)) V^\top_{b}$. Then, 
 \begin{align*}
 &2\big\langle\Gamma-{\textstyle\sum_{l=1}^{r-1}}U_{\!a_l}V_{\!a_l}^\top,G V_{\!a}\Sigma_a^{-1}U^\top_{\!a} G \big\rangle
 =2\langle U_{b}{\rm Diag}(\zeta(\Gamma)) V^\top_{b},G V_{\!a}\Sigma_a^{-1}U^\top_{\!a} G \rangle \nonumber \\
 &=\langle {\rm Diag}(\zeta(\Gamma)),U^\top_{b} G V_{\!a} \Sigma_a^{-1}U^\top_{\!a} G V_{b}+V^\top_{b} G^\top U_{\!a}\Sigma_a^{-1}V^\top_{\!a} G^\top U_{b}\rangle.
 \end{align*}
 Since now $\beta_0,\ldots,\beta_{q}$ is a partition of the index set $b$, we have $U_{b}=[U_{\!\beta_{1}}\,\cdots\,U_{\!\beta_{q}}\ U_{\!\beta_0}]$
 and $V_{b}=[V_{\!\beta_1}\,\cdots\,V_{\!\beta_{q}}\ V_{\!\beta_0}]$. Then, from the above equality, an elementary calculation yields
 \begin{align*}
 &2\big\langle\Gamma-{\textstyle\sum_{l=1}^{r-1}}U_{\!a_l}V_{\!a_l}^\top,G V_{\!a}\Sigma_a^{-1}U^\top_{\!a} G \big\rangle\\
 &=\sum_{j=0}^{q}\zeta_{j}(\Gamma){\rm tr} (U^\top_{\beta_{j}} G V_{\!a} \Sigma_a^{-1}U^\top_{\!a} G V_{\beta_{j}}+V^\top_{\beta_{j}} G^\top U_{\!a} \Sigma_a^{-1}V^\top_{\!a} G^\top U_{\!\beta_{j}})\nonumber\\
 &=\sum_{j=1}^{q}\sum_{l=1}^{r-1}\frac{2\zeta_{j}(\Gamma)}{\nu_{l}(X)}\big\langle[\mathcal{S}(U^{\top}\! G {V_{1}})]_{a_{l}\beta_{j}},[\mathcal{S}(V_{1}^\top G^{\top} {U})]_{\beta_{j}a_{l}}\big\rangle\nonumber\\
 &=\sum_{l=1}^{r-1}\sum_{j=1}^{q}\frac{2\zeta_{j}(\Gamma)}{\nu_{l}(X)}\big(\| [\mathcal{S}(U^{\top}\! G {V_{1}})]_{a_{l}\beta_{j}}\|_F^2-\| [\mathcal{T}(U^{\top}GV_{1})]_{a_{l}\beta_{j}}\|_F^2\big),
 \end{align*}
 where the third equality is due to the definition of the linear mappings $\mathcal{S}$ and $\mathcal{T}$. Combining the above equation with the previous \eqref{GammTraceTemp1} and \eqref{KNormSubDer2} results in
 \begin{align*}
 d^2\Psi_{\!\kappa}(X|\Gamma)(G)
 &=\!\sum_{l=1}^{r-1}\sum_{l\ne l'=1}^{r}\frac{2\|[\mathcal{S}(U^{\top}\!G V_{1})]_{a_{l}a_{l'}}\|_F^2}{\nu_l(X)\!-\!\nu_{l'}(X)}+\sum_{l=1}^{r-1}\sum_{l'=1}^{r}
\frac{2\|[\mathcal{T}(U^{\top}\!GV_{1})]_{a_{l}a_{l'}}\|_F^2}{\nu_l(X)\!+\!\nu_{l'}(X)}\\
 &\quad-\sum_{l=1}^{r-1}\sum_{j=1}^{q}\frac{2\zeta_{j}(\Gamma)}{\nu_{l}(X)}\big(\| [\mathcal{S}(U^{\top}\! G {V_{1}})]_{a_{l}\beta_{j}}\|_F^2-\| [\mathcal{T}(U^{\top}GV_{1})]_{a_{l}\beta_{j}}\|_F^2\big)\\
 &\quad+\sum_{l=1}^{r-1}\frac{1}{\nu_l(X)}\|U_{\!a_l}^\top GV_{\!c}\|_F^2.
 \end{align*}
 For the sum of the first three terms on the right hand side, we calculate that
 \begin{align*}
  &\sum_{l=1}^{r-1}\frac{2\|[\mathcal{S}(U^{\top}\!G V_{1})]_{a_{l}b}\|_F^2}{\nu_l(X)}+\sum_{l=1}^{r-1}\sum_{l'=1}^{r-1}    	\frac{2\|[\mathcal{T}(U^{\top}\!GV_{1})]_{a_{l}a_{l'}}\|_F^2}{\nu_l(X)\!+\!\nu_{l'}(X)}+\sum_{l=1}^{r-1}\frac{2\|[\mathcal{T}(U^{\top}\!GV_{1})]_{a_{l}b}\|_F^2}{\nu_l(X)}\\
  &  -\!\sum_{l=1}^{r-1}\sum_{j=1}^{q}\frac{2\zeta_{j}(\Gamma)}{\nu_{l}(X)}\big(\| [\mathcal{S}(U^{\top}\! GV_{1})]_{a_{l}\beta_{j}}\|_F^2-\!\| [\mathcal{T}(U^{\top}\! GV_{1})]_{a_{l}\beta_{j}}\|_F^2\big)\nonumber\\
  &=\sum_{l=1}^{r-1}\sum_{j=1}^{q} 		\frac{2(1-\!\zeta_{j}(\Gamma))}{\nu_l(X)}\|[\mathcal{S}(U^{\top}\! GV_{1})]_{a_{l}\beta_{j}}\|_F^2+\sum_{l=1}^{r-1}\frac{2}{\nu_l(X)}\|[\mathcal{S}(U^{\top}\! GV_{1})]_{a_{l}\beta_{0}}\|_F^2\\
  &\quad +\sum_{l=1}^{r-1}\sum_{l'=1}^{r-1} 		\frac{2\|[\mathcal{T}(U^{\top}\!GV_{1})]_{a_{l}a_{l'}}\|_F^2}{\nu_l(X)\!+\!\nu_{l'}(X)}+\sum_{l=1}^{r-1}\sum_{j=0}^{q} 		\frac{2(1\!+\!\zeta_{j}(\Gamma))}{\nu_l(X)}\|[\mathcal{T}(U^{\top}\!GV_{1})]_{a_{l}\beta_{j}}\|_F^2.
  \end{align*}
 The above two equations implies the desired result. The proof is completed.	
 \end{proof}

From Proposition \ref{MainTh2}, we obtain the second subderivative of the nuclear norm $\|\cdot\|_*$.
 \begin{corollary}\label{Nnorm-subderiv}
  Fix any $ X\in \mathbb{R}^{n\times m}$ and $ \Gamma\in\partial\| X\|_{*}$. Pick $(U,V)\in\mathbb{O}^{n,m}( X) \cap \mathbb{O}^{n,m}( \Gamma)$ with $V=[V_{1}\ \ V_{c}]$ for $V_{1}\in\mathbb{O}^{m\times n}$. Let $a_l=a_l(X)$ for each $l\in[s(X)+1]$ with $a_{l}(X)$ defined by \eqref{al}, and let $r\in[s(X)+1]$ be such that $n\in a_{r}$. Let $\zeta_1(\Gamma)>\cdots>\zeta_{q}(\Gamma)$ be the nonzero distinct entries of $\big\{\sigma_i(\Gamma)\ |\ i\in a_{r}\big\}$, and write
  $\beta_{l}(\Gamma)\!:=\big\{i\in a_{r}\,|\,\sigma_i(\Gamma)=\zeta_{l}(\Gamma)\big\}$ for $l\in[q]$ and $\beta_{0}(\Gamma)\!:=\big\{i\in a_{r}\,|\,\sigma_i(\Gamma)=0\big\}$. Then, for any $G\in \mathcal{C}_{\|\cdot\|_{*}}( X, \Gamma)$, if ${\rm rank}(X)=n$,  	
  \begin{equation*}
  d^2\|\cdot\|_{*}( X| \Gamma)(G)
  =\sum_{l=1}^{s(X)}\sum_{l'=1}^{s(X)}\frac{2\|[\mathcal{T}(U^{\top}\!GV_{\!1})]_{a_{l}a_{l'}}\|_F^2}{\nu_l(X)\!+\!\nu_{l'}(X)} +\sum_{l=1}^{s(X)}\frac{1}{\nu_l(X)}\|U^\top_{\!a_l}G{V_{\!c}}\|_F^2;
 \end{equation*} 
 and if ${\rm rank}(X)<n$, 
 \begin{align*}
  &d^2\|\cdot\|_{*}( X| \Gamma)(G)
  =\!\sum_{l=1}^{s(X)}\sum\limits_{l'=1}^{s(X)}
  \frac{2\|[\mathcal{T}(U^{\top}\!GV_{\!1})]\|_F^2}{\nu_l(X)\!+\!\nu_{l'}(X)}+\sum_{l=1}^{s(X)}\frac{1}{\nu_l(X)}\|U_{\!a_l}^\top GV_c\|_F^2\\
  &\qquad+\sum_{l=1}^{s(X)}\sum_{j=0}^{q}\!\Big[\frac{2(1\!-\zeta_{j}(\Gamma))}{\nu_l(X)}
 	\|[\mathcal{S}(U^{\top}\!G V_{\!1})]_{a_{l}\beta_{j}}\|_F^2 \!+\!\frac{2(1\!+\zeta_{j}(\Gamma))}{\nu_l(X)}\|[\mathcal{T}(U^{\top}\!G V_{1})]_{a_{l}\beta_{j}}\|_F^2\Big].
 \end{align*} 
 \end{corollary}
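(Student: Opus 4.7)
The plan is to recognize that $\|\cdot\|_* = \Psi_{n}$ and specialize Proposition \ref{MainTh2} to $\kappa = n$. The corollary's split into ${\rm rank}(X) = n$ versus ${\rm rank}(X) < n$ corresponds exactly to which block $a_r$ contains the index $n$: if ${\rm rank}(X) = n$, then $b(X) = \emptyset$ and $a_{s(X)+1} = \emptyset$, forcing $n \in a_{s(X)}$ so $r = s(X)$ (the first case of Proposition \ref{MainTh2}); if ${\rm rank}(X) < n$, then $n \in b(X) = a_{s(X)+1}$ so $r = s(X)+1$ (the second case). So the first step is this case dichotomy.

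Next, I would use Lemma \ref{KnorSubdifLemma} to pin down the singular values of $\Gamma$ on $a_{r}$ in each case. In the rank-$n$ case (Lemma \ref{KnorSubdifLemma}(i) with $\kappa = n$), the constraint $\sum_{i \in \beta} \sigma_i(\Gamma) = \kappa - \kappa_0 = |\beta|$ together with $0 \le \sigma_\beta(\Gamma) \le e_\beta$ forces $\sigma_i(\Gamma) = 1$ for every $i \in a_r = a_{s(X)}$. Combined with $\sigma_\alpha(\Gamma) = e_\alpha$, this gives $q = 1$, $\zeta_1(\Gamma) = 1$, $\beta_1(\Gamma) = a_{s(X)}$, and $\beta_0(\Gamma) = \emptyset$. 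In the rank-deficient case (Lemma \ref{KnorSubdifLemma}(ii)), the $\sigma_i(\Gamma)$ for $i \in b(X) = a_r$ are only constrained to lie in $[0,1]$, so the sets $\beta_0, \beta_1, \ldots, \beta_q$ defined in \eqref{beta_l} partition $b(X)$ with general values $\zeta_j(\Gamma) \in (0,1]$ for $j \ge 1$ (and we set $\zeta_0(\Gamma) := 0$).

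The third step is to substitute these specifications into the two formulae of Proposition \ref{MainTh2} and simplify. For ${\rm rank}(X) = n$, every term in the first formula of Proposition \ref{MainTh2} that is indexed by $l' = s(X)+1$ (so that $a_{l'} = \emptyset$), by the coefficient $1 - \zeta_1(\Gamma) = 0$, or by $\beta_0 = \emptyset$ drops out; this kills the first, second, third, and eighth summands. The surviving two $\mathcal{T}$-summands combine cleanly: the term with index $l = r = s(X)$ supplied by the fifth summand (via $\beta_1 = a_{s(X)}$) fills in the missing $l = s(X)$ row of the double sum in the fourth summand, producing $\sum_{l,l'=1}^{s(X)} 2\|[\mathcal{T}(U^{\top}GV_{1})]_{a_l a_{l'}}\|_F^{2}/(\nu_l(X) + \nu_{l'}(X))$. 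The two $V_c$-cross-terms (the sixth and seventh summands) similarly merge into $\sum_{l=1}^{s(X)} \nu_l(X)^{-1}\|U_{a_l}^{\top} G V_c\|_F^{2}$. For ${\rm rank}(X) < n$, the second formula of Proposition \ref{MainTh2} already has all sums indexed by $[s(X)] = [r-1]$; the only reindexing is to absorb the isolated $\beta_0$-summand into the $j = 0$ slot of the double sum $\sum_{j=0}^{q}$ by using $\zeta_0(\Gamma) := 0$, so that $2(1 - \zeta_0(\Gamma))/\nu_l(X) = 2/\nu_l(X)$ reproduces the $\beta_0$ contribution and matches the stated expression.

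I expect no serious obstacles. The main piece of work is the bookkeeping in the rank-$n$ case, specifically checking that the fourth and fifth summands of Proposition \ref{MainTh2}'s first formula fuse into a symmetric double sum over $[s(X)] \times [s(X)]$, and that the cancellations forced by $\zeta_1(\Gamma) = 1$, $a_{s(X)+1} = \emptyset$, and $\beta_0 = \emptyset$ are accounted for exactly. Everything else is routine substitution.
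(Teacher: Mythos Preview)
Your proposal is correct and follows essentially the same approach as the paper's own proof: specialize Proposition~\ref{MainTh2} to $\kappa=n$, split into the cases $r=s(X)$ (full rank) and $r=s(X)+1$ (rank deficient), and simplify using $a_{s(X)+1}=\emptyset$, $q=1$, $\zeta_1(\Gamma)=1$, $\beta_1=a_{s(X)}$, $\beta_0=\emptyset$ in the first case. Your write-up is in fact more explicit than the paper's about why $\sigma_i(\Gamma)=1$ on $a_r$ in the full-rank case (via Lemma~\ref{KnorSubdifLemma}(i)) and about how the $\mathcal{T}$-terms and $V_c$-terms merge, but the underlying argument is the same.
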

\begin{proof}
 When ${\rm rank}(X)=n$, invoking Proposition \ref{MainTh2} with $r=s(X)$ and noting that $a_{s(X)+1}=\emptyset,q=1$ and $\beta_{1}=a_{s(X)}$, we obtain 
 \begin{align*}
 d^2\|\cdot\|_{*}( X| \Gamma)(G)&=\sum_{l=1}^{r-1}\sum_{l'=1}^{ s(X)}\frac{2\|[\mathcal{T}(U^{\top}\!G V_{1})]_{a_{l}a_{l'}}\|_F^2}{\nu_l(X)\!+\!\nu_{l'}(X)}+\sum_{l'=1}^{ s(X)}
 \frac{2\|[\mathcal{T}(U^{\top}\! GV_{1})]_{a_{r}a_{l'}}\|_F^2}{\nu_{r}(X)+\nu_{l'}(X)}\\
 &\quad+\sum_{j=1}^{q}\frac{\zeta_{j}(\Gamma)}{\nu_{r}(X)}\|U^\top_{\!\beta_j}GV_{c}\|_F^2+\sum_{l=1}^{r-1}\frac{\|U_{\!a_l}^\top GV_{\!c}\|_F^2}{\nu_l(X)}\\
 &=\sum_{l=1}^{s(X)}\sum_{l'=1}^{s(X)}\frac{2\|[\mathcal{T}(U^{\top}\!GV_{\!1})]_{a_{l}a_{l'}}\|_F^2}{\nu_l(X)\!+\!\nu_{l'}(X)}+\sum_{l=1}^{s(X)}\frac{1}{\nu_{l}(X)}\|U^\top_{a_{l}}GV_{c}\|_F^2.
 \end{align*}
 When ${\rm rank}(X)<n$, the conclusion follows Proposition \ref{MainTh2} with $r=s(X)+1$.
 \end{proof}

Using Proposition \ref{MainTh2} for $r=1$ yields the second subderivative of the spectral norm.
\begin{corollary}\label{spectral-subderiv}
 Fix any $ X\in \mathbb{R}^{n\times m}$ and $ \Gamma\in\partial\|X\|$. Pick $(U,V)\in\mathbb{O}^{n,m}( X) \cap \mathbb{O}^{n,m}( \Gamma)$ with $V=[V_{1}\ \ V_{c}]$ for $V_{1}\in\!\mathbb{O}^{m\times n}$. Let $a_l=a_l(X)$ for each $l\in[s(X)+1]$ with $a_{l}(X)$ defined by \eqref{al}. Let $\zeta_1(\Gamma)>\cdots>\zeta_{q}(\Gamma)$ be the nonzero distinct entries of the set $\big\{\sigma_i(\Gamma)\ |\ i\in a_1\big\}$, and write $\beta_{0}(\Gamma)\!:=\big\{i\in a_1\,|\,\sigma_i(\Gamma)=0\big\}$ and $\beta_l(\Gamma)\!:=\big\{i\in a_1\,|\,\sigma_i(\Gamma)=\zeta_{l}(\Gamma)\big\}$ for $l\in[q]$. Then, for any $G\in\mathcal{C}_{\|\cdot\|}( X, \Gamma)$, 
  \begin{align*}
   d^2\|\cdot\|( X| \Gamma)(G) 
   &=\sum_{j=1}^{q}\!\sum_{l'=2}^{s(X)+1}\frac{2\zeta_{j}(\Gamma)\| [\mathcal{S}(U^{\top}\! GV_{\!1})]_{\beta_{j}a_{l'}}\|_F^2}{\nu_{1}(X)-\nu_{l'}(X)}
   	+\sum_{j=1}^{q}\frac{\zeta_{j}(\Gamma)}{\nu_{1}(X)}\|U^\top_{\!\beta_j}GV_{\!c}\|_F^2\\
   &\quad+\sum_{j=1}^{q}\!\sum_{l'=1}^{s(X)+1}\frac{2\zeta_{j}(\Gamma)\|[\mathcal{T}(U^{\top}\! GV_{1})]_{\beta_ja_{l'}}\|_F^2}{\nu_{1}(X)+\nu_{l'}(X)}.
  \end{align*} 
 \end{corollary}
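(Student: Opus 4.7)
The plan is to specialize Proposition \ref{MainTh2} to the case $\kappa = 1$, since $\|\cdot\| = \sigma_1(\cdot) = \Psi_1(\cdot)$. First I would identify the relevant index $r$: because $\sigma_1(X) = \nu_1(X)$ whenever $X \neq 0$, one has $1 \in a_1(X)$, and so the integer $r \in [s(X)+1]$ with $\kappa = 1 \in a_r$ must be $r = 1$, placing us in the first case $r \in [s(X)]$ of Proposition \ref{MainTh2}. (If $X = 0$, then $s(X) = 0$ and $r = s(X)+1 = 1$ falls under the second case of Proposition \ref{MainTh2}, in which every displayed sum has index range $l = 1,\ldots,r-1 = 0$ and thus vanishes, giving $d^2\|\cdot\|(X|\Gamma)(G) = 0$; this degenerate case is consistent with the corollary under the convention that all sums on its right-hand side are empty.)

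Next, substituting $r = 1$ into the first formula of Proposition \ref{MainTh2}, the outer sums $\sum_{l=1}^{r-1}(\cdots) = \sum_{l=1}^{0}(\cdots)$ are empty. This eliminates exactly six of the eight terms in that expression, namely: the two double sums indexed by $(l, l')$ with $l \le r-1$ involving $\mathcal{S}$ and $\mathcal{T}$; the double sum with $(l,j)$ involving $1 - \zeta_j(\Gamma)$; the term with $\sum_{l=1}^{r-1}\frac{1}{\nu_l(X)}\|U^\top_{a_l} G V_c\|_F^2$; and the final sum with $\mathcal{S}$ and $\beta_0$. The three surviving terms are precisely
\[
\sum_{j=1}^{q}\sum_{l'=2}^{s(X)+1} \frac{2\zeta_j(\Gamma)\|[\mathcal{S}(U^\top G V_1)]_{\beta_j a_{l'}}\|_F^2}{\nu_1(X)-\nu_{l'}(X)}, \quad \sum_{j=1}^{q}\frac{\zeta_j(\Gamma)}{\nu_1(X)}\|U^\top_{\beta_j} G V_c\|_F^2,
\]
and $\sum_{j=1}^{q}\sum_{l'=1}^{s(X)+1} \frac{2\zeta_j(\Gamma)\|[\mathcal{T}(U^\top G V_1)]_{\beta_j a_{l'}}\|_F^2}{\nu_1(X)+\nu_{l'}(X)}$, which coincide with the claimed expression after replacing the generic $\nu_r(X)$ by $\nu_1(X)$.

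There is essentially no obstacle here: once one verifies that $r = 1$, the corollary is a pure substitution into Proposition \ref{MainTh2}, and the main bookkeeping step is just confirming that the $\sum_{l=1}^{0}$ sums are empty. The only point that requires mild care is tracking the index sets $\beta_j(\Gamma)$: in the corollary they are defined relative to $a_1 = a_r$, which matches the definition used in Proposition \ref{MainTh2} since $r = 1$. No further lemmas or additional machinery are required beyond Proposition \ref{MainTh2} itself.
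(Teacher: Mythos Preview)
Your proposal is correct and takes exactly the same approach as the paper, which simply states that the corollary follows by using Proposition~\ref{MainTh2} with $r=1$. One cosmetic slip: you say ``six of the eight terms'' are eliminated, but the list you give (and the actual count of terms carrying a $\sum_{l=1}^{r-1}$ prefix) is five; the three surviving terms you identify are correct, so this does not affect the argument.
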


 Notice that $d^2\Psi_{\kappa}(X|\Gamma)$ is always nonnegative by the convexity of {\color{blue}$\Psi_{\kappa}$}. Together with its expression in Proposition \ref{MainTh2}, we have the following conclusion.
 \begin{corollary}\label{corollary3.3}
 Fix any $X\in\mathbb{R}^{n\times m}$ and $\Gamma\in\partial\Psi_{\kappa}(X)$. Pick $(U,V)\in\mathbb{O}^{n,m}(X)\cap\mathbb{O}^{n,m}(\Gamma)$ with $V=[V_{1}\ \ V_{c}]$ for $V_{1}\in\mathbb{O}^{m\times n}$. For each $l\in[s(X)+1]$, let $a_l=a_l(X)$ with $a_l(X)$ defined by \eqref{al}, and let $r\in[s(X)\!+\!1]$ be such that $\kappa\in a_{r}$. Write
 $\alpha\!:=\bigcup_{l=1}^{r-1}a_{l},\beta\!:=a_{r}$, $ \gamma\!:=\bigcup_{l=r+1}^{s(X)+1}a_l$ and $\beta_{+}\!:=\!\beta\backslash(\beta_1\cup\beta_0)$
 with $\beta_1\!:=\beta_1(\Gamma)$ and $\beta_0\!:=\beta_0(\Gamma)$.
 Then, for any $G\in\mathcal{C}_{\Psi_{\kappa}}(X,\Gamma)$, 
 when $r\in[s(X)]$,  $d^2\Psi_{\kappa}(X|\Gamma)(G)=0$ if and only if
 \begin{subnumcases}{}
  (U^{\top}\!GV_1)_{(\alpha\cup\beta_1\cup\beta_+)(\alpha\cup\beta_1\cup\beta_+)}\in \mathbb{S}^{|\alpha|+|\beta_1|+|\beta_+|}, \\
  (U^{\top}\!GV_{1})_{\beta_1\beta_0}=(U^{\top}\!G V_{1})_{\beta_0\beta_1}^{\top},\,(U^{\top}\!G V_{1})_{\beta_+\beta_0}=(U^{\top}\!GV_{1})_{\beta_0\beta_+}^\top, \\
  (U^{\top}\!GV_{1})_{\alpha\beta_+}=(U^{\top}\! G V_{1})_{\beta_+\alpha}^{\top}=0,\,(U^{\top} G V_{1})_{\alpha\beta_0}=(U^{\top}GV_{1})_{\beta_0\alpha}^\top=0,\nonumber\\
  (U^{\top}\!GV_1)_{\alpha\gamma}=(U^{\top}\!G V_1)_{\gamma\alpha}^\top=0,\\
  (U^{\top}\!G V_{1})_{\beta_1\gamma}=(U^{\top}\! G V_{1})_{\gamma\beta_1}^{\top}=0,\,
  (U^{\top}\!GV_{1})_{\beta_+\gamma}=(U^{\top}\! G V_{1})_{\gamma\beta_+}^{\top}=0,\nonumber\\
  (U^{\top}\!GV_{1})_{\alpha c}=0,\,
  (U^{\top}\!GV_{1})_{\beta_1 c}=0,\,
 (U^{\top}\!GV_{1})_{\beta_+ c}=0;
 \end{subnumcases}
 and when $r=s(X)+1$, $d^2\Psi_{\kappa}(X|\Gamma)(G)=0$ if and only if 
 \begin{subnumcases}{}
 (U^{\top}\!GV_{1})_{\alpha\alpha}\in \mathbb{S}^{|\alpha|},\,
 (U^{\top}\!GV_{1})_{\alpha\beta_1}=(U^{\top}\!G V_{1})_{\beta_1\alpha}^{\top},\\
 (U^{\top}\!GV_{1})_{\alpha c}=0,\,
 (U^{\top}\!GV_{1})_{\alpha\beta_+}=(U^{\top}\!G V_{1})_{\beta_+\alpha}^\top=0, \\
 (U^{\top}\!GV_{1})_{\alpha\beta_0}=(U^{\top}\!G V_{1})_{\beta_0\alpha}^{\top}=0.
 \end{subnumcases}
\end{corollary}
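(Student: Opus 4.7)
The proof proceeds by a direct term-by-term analysis of the closed-form expressions in Proposition~\ref{MainTh2}. Since $\Psi_\kappa$ is convex we have $d^2\Psi_\kappa(X|\Gamma)(G)\ge 0$, and both formulas in Proposition~\ref{MainTh2} exhibit $d^2\Psi_\kappa(X|\Gamma)(G)$ as a finite sum of summands of the form $c\,\|N\|_F^2$ with $c\ge 0$. Hence $d^2\Psi_\kappa(X|\Gamma)(G)=0$ if and only if every summand vanishes, which amounts to requiring, for each summand, that either the coefficient $c$ or the Frobenius-norm factor $\|N\|_F$ equals zero.

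The first step is to classify the coefficients. Every denominator $\nu_l(X)-\nu_{l'}(X)$ appearing in the formulas has $l<l'$ among the distinct values $\nu_1(X)>\cdots>\nu_{s(X)}(X)>0$ (or has $\nu_{l'}(X)=0$), so it is strictly positive; every $\nu_l(X)+\nu_{l'}(X)$ is strictly positive for the same reason; every $\zeta_j(\Gamma)$ with $j\in[q]$ is strictly positive by the construction of the distinct nonzero values; and the only coefficient that may legitimately vanish is $1-\zeta_j(\Gamma)$, which is zero exactly when $\zeta_j(\Gamma)=1$, corresponding, by the indexing convention adopted in the statement, to the index set $\beta_1$. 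This reduces the identity $d^2\Psi_\kappa(X|\Gamma)(G)=0$ to a system of linear equations on the blocks of $U^\top GV_1$ and $U^\top GV_c$.

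The second step is to translate these equations into the four stated clauses using the elementary dictionary: for an index pair $(I,J)$, the simultaneous vanishing of $[\mathcal{S}(U^\top GV_1)]_{IJ}$ and $[\mathcal{T}(U^\top GV_1)]_{IJ}$ is equivalent to $(U^\top GV_1)_{IJ}=(U^\top GV_1)_{JI}^\top=0$, while vanishing only of $[\mathcal{T}(U^\top GV_1)]_{IJ}$ is equivalent to the symmetry $(U^\top GV_1)_{IJ}=(U^\top GV_1)_{JI}^\top$. Applied to the formula for $r\in[s(X)]$, the $\mathcal{S}$-summands produce precisely the zero-block conditions in (c) and (d), with $[\mathcal{S}(U^\top GV_1)]_{\alpha\beta_1}$ left unconstrained because $1-\zeta_1(\Gamma)$ may vanish; the $\mathcal{T}$-summands force $\mathcal{T}(U^\top GV_1)$ to vanish on $(\alpha\cup\beta_1\cup\beta_+)\times[n]$, which is recorded as the symmetry (a) on the $(\alpha\cup\beta_1\cup\beta_+)$-diagonal block plus the symmetries (b) between $\beta_1,\beta_+$ and $\beta_0$ plus the zero-block conditions already in (c) and (d); and the last two summands give the $V_c$-conditions in (d) and the $\alpha c$-condition in (c$'$)-like fashion. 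The case $r=s(X)+1$ is handled identically using the second formula of Proposition~\ref{MainTh2}: there $\gamma=\emptyset$, only the blocks $\alpha\times(\alpha\cup\beta_1\cup\beta_+\cup\beta_0)$ and $U_\alpha^\top GV_c$ survive, and the three clauses in the statement emerge by the same procedure.

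The main obstacle is purely combinatorial: one must carefully track, for every index pair that appears inside an $\mathcal{S}$- or $\mathcal{T}$-norm in Proposition~\ref{MainTh2}, whether the coefficient is strictly positive or may legitimately vanish, and then verify that the resulting list of zero-block and symmetry conditions coincides exactly with the union of the clauses (a)--(d) (respectively its analogue in the second case), with no overlap contributing a missing constraint. Once the coefficient classification is made explicit, no further technical ingredient is needed beyond the formulas in Proposition~\ref{MainTh2} and the nonnegativity of $d^2\Psi_\kappa(X|\Gamma)$.
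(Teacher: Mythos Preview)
Your proposal is correct and follows exactly the approach the paper takes: the paper merely observes that $d^2\Psi_{\kappa}(X|\Gamma)\ge 0$ by convexity of $\Psi_{\kappa}$ and that the formula in Proposition~\ref{MainTh2} is a sum of nonnegative terms, so equality to zero forces each summand to vanish. Your elaboration---classifying which coefficients are strictly positive versus possibly zero (only $1-\zeta_j(\Gamma)$ when $\zeta_j(\Gamma)=1$), and then translating vanishing of $\mathcal{S}$- and $\mathcal{T}$-blocks into the stated symmetry and zero-block conditions---is precisely the routine bookkeeping the paper leaves implicit.
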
  
 \section{Characterization of tilt stability}\label{sec4}
  
 To provide a specific characterization of tilt stability for problem \eqref{KnormRegular}, we need the following technical lemma to present the critical cone of $\Psi_{\kappa}$ at any point $(X,\Gamma)\in{\rm gph}\,\partial\Psi_{\kappa}$.
 \begin{lemma}\label{lemma-CCone}
 (see \cite[Propositon 10]{Ding17}) Consider any $X\in\mathbb{R}^{n\times m}$ and  $\Gamma\in\partial\Psi_{\kappa}(X)$. Pick $(U,V)\in\mathbb{O}^{n,m}(X) \cap \mathbb{O}^{n,m}(\Gamma)$. Let $a_l=a_l(X)$ for each $l\in[ s(X)+1]$ with $a_{l}(X)$ defined by \eqref{al}, let $r\in[ s(X)+\!1]$ be such that $\kappa\in a_{r}$, and let $\beta_l=\beta_{l}(\Gamma)$ for $l=0,1,\ldots, q$ with $\beta_{l}(\Gamma)$ defined by \eqref{beta_l} and $\beta_+\!:=\bigcup_{j=2}^q\beta_j(\Gamma)$. Then the following assertions hold. 
 \begin{itemize}
 \item [(i)] When $r\in [ s(X)]$, $G\in\mathcal{C}_{\Psi_{\kappa}}(X,\Gamma)$ if and only if there exists $\varpi\in \mathbb{R}$ such that
  \begin{align*}
  \lambda_1\big(\mathcal{S}(U^{\top}_{\!\beta_{0}} G V_{\!\beta_{0}})\big)\leq \varpi\leq \lambda_{|\beta_1|}\big(\mathcal{S}(U^{\top}_{\!\beta_1} G V_{\!\beta_1})\big),\qquad\\	
   \mathcal{S}(U^{\top}_{\!a_{r}} G V_{\!a_{r}})=\left[\begin{matrix}
   	\mathcal{S}(U^{\top}_{\!\beta_1} GV_{\!\beta_1})& 0 & 0\\
   	0 & \varpi I_{|\beta_{+}|} & 0 \\
   	0& 0 & 	\mathcal{S}(U^{\top}_{\!\beta_{0}} G V_{\!\beta_{0}})
   \end{matrix}\right].
  \end{align*}

 \item [(ii)] When $r=\! s(X)+1$ and $\|\Gamma\|_*\!<\kappa$, $G\in\!\mathcal{C}_{\Psi_{\kappa}}(X,\Gamma)$ iff 
 $\mathcal{S}(U^{\top}_{\!\beta_1} G V_{\!\beta_1})\in \mathbb{S}_+^{|\beta_1|}$ and 
 \[
  [U_{\!a_{r}}^\top GV_{\!a_{r}}\ \ U_{\!a_{r}}^\top G V_{c}]=\begin{pmatrix}
	\mathcal{S}(U^{\top}_{\beta_1} GV_{\beta_1})& 0 & 0& 0\\
	0 & 0 & 0&0 \\
	0& 0 & 	0& 0
  \end{pmatrix}.
 \]
  
 \item [(iii)] When $r=s(X)+1$ and $\|\Gamma\|_*\!=\kappa$, $G\in\mathcal{C}_{\Psi_{\kappa}}(X,\Gamma)$ iff there is $\varpi\in\mathbb{R}$ such that
  \begin{align*}
  \sigma_1\big([U_{\beta_0}^\top GV_{\beta_0}\quad U_{\beta_0}^\top G V_{c}]\big)\leq\varpi\leq \lambda_{|\beta_1|}\big(\mathcal{S}(U^{\top}_{\beta_1} G V_{\beta_1})\big),\qquad\qquad\\
  [U_{\!a_{r}}^\top G V_{\!a_{r}}\ \ U_{\!a_{r}}^\top G V_{\!c}]=\begin{pmatrix}
  \mathcal{S}(U^{\top}_{\beta_1} GV_{\beta_1})& 0 & 0& 0\\
   0 & \varpi I_{|\beta_{+}|} & 0&0 \\
   0& 0 & U_{\beta_0}^\top GV_{\!\beta_0}& U_{\beta_0}^\top G V_{c}
  \end{pmatrix}.
  \end{align*}
 \end{itemize}
 \end{lemma}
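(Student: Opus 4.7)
The plan is to derive the critical cone of $\Psi_{\kappa}$ from the well-understood critical cone of $\Phi_{\kappa}$ by exploiting the composition $\Psi_{\kappa}=\Phi_{\kappa}\circ\mathcal{B}$. Since $\mathcal{B}$ is linear and $\Phi_{\kappa}$ is convex and Lipschitz, the chain rule for directional derivatives gives $d\Psi_{\kappa}(X)(G)=d\Phi_{\kappa}(\mathcal{B}(X))(\mathcal{B}(G))$; on the other hand, for any $M\in\Lambda(X,\Gamma)$ one has $\langle\Gamma,G\rangle=\langle\mathcal{B}^{*}(M),G\rangle=\langle M,\mathcal{B}(G)\rangle$. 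Consequently $G\in\mathcal{C}_{\Psi_{\kappa}}(X,\Gamma)$ if and only if $\mathcal{B}(G)\in\mathcal{C}_{\Phi_{\kappa}}(\mathcal{B}(X),M)$, the equivalence being independent of which $M\in\Lambda(X,\Gamma)$ one picks since the right-hand side depends only on $G$.

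Next, applying Lemma~\ref{Phik-lemma}(ii) with $Q=P$, together with the elementary identity $P_{a_l}^{\top}\mathcal{B}(G)P_{a_l}=\mathcal{S}(U_{a_l}^{\top}GV_{a_l})$ for $l<r$, the trace blocks contribute exactly $\sum_{l=1}^{r-1}\langle U_{a_l}V_{a_l}^{\top},G\rangle$, which cancels the corresponding part of $\langle\Gamma,G\rangle$ by the structure of $\partial\Psi_{\kappa}(X)$ from Lemma~\ref{KnorSubdifLemma}. Setting $\widetilde{\Gamma}:=\Gamma-\sum_{l=1}^{r-1}U_{a_l}V_{a_l}^{\top}$, the critical cone condition thereby reduces to
\[
\Phi_{l_{\kappa}(\mathcal{B}(X))}(P_{\theta_r}^{\top}\mathcal{B}(G)P_{\theta_r})=\langle\widetilde{\Gamma},G\rangle,
\]
where $P_{\theta_r}=P_{a_r}$ when $r\in[s(X)]$ and $P_{\theta_r}=P_0$ when $r=s(X)+1$.

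For case (i), set $W:=P_{a_r}^{\top}\mathcal{B}(G)P_{a_r}=\mathcal{S}(U_{a_r}^{\top}GV_{a_r})\in\mathbb{S}^{|a_r|}$. By Lemma~\ref{KnorSubdifLemma}(i) and $\sigma_{\gamma}(\Gamma)=0$, $\widetilde{\Gamma}=U_{a_r}\mathrm{Diag}(\sigma_{a_r}(\Gamma))V_{a_r}^{\top}$, so the identity becomes $\Phi_{l_{\kappa}(\mathcal{B}(X))}(W)=\langle\mathrm{Diag}(d),W\rangle$, where $d=\sigma_{a_r}(\Gamma)$ carries entries $1$ on $\beta_1$, values in $(0,1)$ on $\beta_+$, and $0$ on $\beta_0$, with $\sum d=\kappa-\kappa_0=l_{\kappa}(\mathcal{B}(X))$. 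Using the variational representation $\Phi_{k}(W)=\max\{\langle Z,W\rangle:0\preceq Z\preceq I,\ \mathrm{tr}(Z)=k\}$, the feasible matrix $\mathrm{Diag}(d)$ attains the maximum iff the complementary-slackness conditions hold: entries with $d_i=1$ force the $\beta_1$-diagonal block of $W$ to capture the top $|\beta_1|$ eigenvalues, entries with $0<d_i<1$ force the $\beta_+$-block to be $\varpi I_{|\beta_+|}$ at a common eigenvalue $\varpi$, entries with $d_i=0$ constrain the $\beta_0$-block eigenvalues to be at most $\varpi$, and the off-diagonal couplings between $\alpha$, $\beta_1$, $\beta_+$, $\beta_0$, $\gamma$ and $c$ must vanish. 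Repackaging these conditions on $\mathcal{S}(U_{a_r}^{\top}GV_{a_r})$ produces exactly the statement in (i).

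For cases (ii) and (iii), $P_0^{\top}\mathcal{B}(G)P_0$ expands into the natural $3\times 3$ symmetric block matrix with blocks $\mathcal{S}(U_b^{\top}GV_b)$, $\pm\mathcal{T}(U_b^{\top}GV_b)$, $\tfrac{1}{\sqrt 2}U_b^{\top}GV_c$ and a zero central block, while Lemma~\ref{KnorSubdifLemma}(ii) yields $\widetilde{\Gamma}=U_b\mathrm{Diag}(\sigma_b(\Gamma))V_b^{\top}$. The same SDP-duality argument now applies on $\mathbb{S}^{2|b|+|c|}$ with $\Phi_{l_{\kappa}(\mathcal{B}(X))}=\Phi_{\kappa-\kappa_0}$, and the two subcases separate according to whether $\|\Gamma\|_*=\mathrm{tr}(\mathrm{Diag}(\sigma_b(\Gamma)))+\kappa_0$ is strictly less than $\kappa$ or equal to $\kappa$: in the former, the trace budget is not saturated and the top-eigenvalue upper bound is inactive, suppressing any $\varpi$-slab and forcing the $\beta_1$-block to be positive semidefinite, which gives (ii); in the latter the upper bound becomes active and the usual $\varpi I_{|\beta_+|}$ block re-emerges, giving (iii). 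The main obstacle is the bookkeeping in this degenerate case: the spectral $\pm$-symmetry of $P_0^{\top}\mathcal{B}(G)P_0$ around zero (visible from \eqref{BX-svd}) must be used carefully to identify which blocks of $G$ are free, which are coupled across $\mathcal{S}$ and $\mathcal{T}$, and which must vanish, so that the two final characterizations emerge in their stated form.
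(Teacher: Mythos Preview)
The paper does not prove this lemma; it simply cites \cite[Proposition~10]{Ding17}. Your approach---pulling back the critical cone of $\Phi_{\kappa}$ through $\mathcal{B}$ and then analyzing the equality case of the variational formula $\Phi_{k}(W)=\max\{\langle Z,W\rangle:0\preceq Z\preceq I,\ \mathrm{tr}\,Z=k\}$ via KKT/complementary slackness---is the natural route and is essentially how Ding establishes the result. Your treatment of case~(i) is correct: with $W=\mathcal{S}(U_{a_r}^\top GV_{a_r})$ and $Z=\mathrm{Diag}(\sigma_{a_r}(\Gamma))$, the slackness conditions $\Lambda_1 Z=0$, $\Lambda_2(I-Z)=0$ force exactly the block-diagonal structure with the $\varpi I_{|\beta_+|}$ slab.

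For cases (ii) and (iii) your sketch has the right shape but two points deserve sharpening. First, the phrase ``trace budget not saturated'' is misleading: the constraint $\mathrm{tr}\,Z=l_\kappa(\mathcal{B}(X))=\kappa-\kappa_0$ is always an equality. What distinguishes case~(ii) is that the \emph{multiplier set} $\Lambda(X,\Gamma)$ is large: when $\|\Gamma\|_*<\kappa$ one can choose $\xi=(\xi_1;\xi_2;\xi_3)\in\Omega_r$ so that every entry \emph{except} the $\beta_1$-entries of $\xi_1$ (forced to $1$) and of $\xi_3$ (forced to $0$) lies strictly in $(0,1)$. Complementary slackness then forces the entire complementary block of $W=P_0^\top\mathcal{B}(G)P_0$ to equal $\mu I$, and since that block contains the zero central $|c|\times|c|$ block one gets $\mu=0$; this is what kills the $\varpi$-slab and yields $\mathcal{S}(U_{\beta_1}^\top GV_{\beta_1})\succeq 0$ together with the vanishing of the remaining blocks of $[U_{a_r}^\top GV_{a_r}\ \ U_{a_r}^\top GV_c]$. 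Second, your remark that $P_0^\top\mathcal{B}(G)P_0$ has ``spectral $\pm$-symmetry visible from \eqref{BX-svd}'' is not quite right: \eqref{BX-svd} concerns $\mathcal{B}(X)$, and the compression $P_0^\top\mathcal{B}(G)P_0$ does not in general have symmetric spectrum about zero. The relevant structural fact is rather the explicit block form
\[
P_0^\top\mathcal{B}(G)P_0=\begin{pmatrix}\mathcal{S}(B)&\tfrac{1}{\sqrt2}C&-\mathcal{T}(B)\\ \tfrac{1}{\sqrt2}C^\top&0&\tfrac{1}{\sqrt2}C^\top\\ \mathcal{T}(B)&\tfrac{1}{\sqrt2}C&-\mathcal{S}(B)\end{pmatrix},\qquad B=U_b^\top GV_b,\ C=U_b^\top GV_c,
\]
which is what lets you read off conditions on \emph{both} $\mathcal{S}(B)$ and $\mathcal{T}(B)$ (hence on $B$ itself, not just its symmetric part) and on $C$, producing the full-matrix statements in (ii) and (iii).
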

 \begin{theorem}\label{ThTilt}
  Let $\overline{X}$ be a local optimal solution of problem \eqref{KnormRegular} and let $\overline{\Gamma}\!:=-\nu\nabla\vartheta(\overline{X})$. Suppose that $\nabla^2\vartheta(\cdot)$ is positive semidefinite on an open neighborhood $\mathcal{N}$ of $\overline{X}$. For each $l\in[ s(\overline{X})+\!1]$, let $\overline{a}_l=a_l(\overline{X})$ with $a_l(\overline{X})$ defined by \eqref{al} for $X=\overline{X}$, and let $\overline{r}\in[ s(\overline{X})\!+\!1]$ be such that $\kappa\in\overline{a}_{\overline{r}}$. Write
  $\alpha\!:={\textstyle\bigcup}_{l=1}^{\overline{r}-1}\overline{a}_{l},\beta\!:=\overline{a}_{\overline{r}}, \gamma\!:={\textstyle\bigcup}_{l=\overline{r}+1}^{ s(\overline{X})+1}\overline{a}_l$ and $ \beta_{+}\!:=\beta\backslash(\beta_1\cup\beta_0)$ with $\beta_1\!:=\big\{i\in\beta\,|\,\sigma_i(\overline{\Gamma})=1\}$ and $\beta_0\!:=\big\{i\in\beta\,|\,\sigma_i(\overline{\Gamma})=0\big\}$.
 Then $\overline{X}$ is a tilt-stable solution of problem \eqref{KnormRegular} if and only if ${\rm Ker}\,\nabla^2\vartheta(\overline{X})\cap \Upsilon=\{0\}$ where, if $\overline{r}\in[ s(\overline{X})]$,  
 \begin{align*}
  \Upsilon=\bigg\{G\!\in\mathbb{R}^{n\times m}\,\bigg|\,\exists
  (\overline{U},\overline{V})\!\in\mathbb{O}^{n,m}(\overline{X})\cap\mathbb{O}^{n,m}(\overline{\Gamma}),D\in \mathbb{R}^{|\beta_0|\times|\beta_0|},	
  \begin{pmatrix}
  	A &  B \\
   B^{\top} & C
  \end{pmatrix}\!\in\mathbb{S}^{|\alpha|+|\beta_1|},\\
  \left.\lambda_1(\mathcal{S}(D))\le\varpi\le \lambda_{|\beta_1|}(C)\ {\rm and}\  
  	\begin{pmatrix}
  		E_{11} &   E_{12} \\
  		E_{21} & E_{22}
  	\end{pmatrix}\in \mathbb{R}^{(|\beta_0|+|\gamma|)\times (|\gamma|+|c|)}\right.\\
  	\left. {\rm such\ that}\ \overline{U}^{\top}\!G\overline{V}=\begin{pmatrix}
  		A & B & 0 & 0 & 0 & 0\\
  		B^\top  &  C & 0 &0 & 0 & 0\\
  		0 & 0& \varpi I_{|\beta_+|} & 0 & 0  & 0\\
  		0 & 0 & 0 & D & E_{11} &   E_{12} \\
  		0 & 0 & 0 & 0 & E_{21} & E_{22}
  	\end{pmatrix}\right\};\qquad     	 
  \end{align*}
  if $\overline{r}= s(\overline{X})+1$ and $\|\overline{\Gamma}\|_*<\kappa$,  
  \begin{align*}
  	\Upsilon=\bigg\{G\!\in\mathbb{R}^{n\times m}\,\bigg|\,\exists
  	(\overline{U},\overline{V})\!\in\mathbb{O}^{n,m}(\overline{X})\cap\mathbb{O}^{n,m}(\overline{\Gamma}), A\in \mathbb{S}^{|\alpha|}, B\in \mathbb{R}^{|\alpha|\times |\beta_1|}\ {\rm and}\\
  	\left. C\in \mathbb{S}^{|\beta_1|}\ {\rm such\ that}\ \overline{U}^{\top}\!G\overline{V}=\begin{pmatrix}
  			A & B & 0 & 0 & 0 \\
  		B^\top  &  C & 0 &0 & 0 \\
  		0 & 0& 0 & 0 & 0  \\
  		0 & 0 & 0 & 0&0
  	\end{pmatrix}\right\};     	 
  \end{align*}
  and if $\overline{r}= s(\overline{X})+1$ and $\|\overline{\Gamma}\|_*=\kappa$, 
  \begin{align*}
  	\Upsilon=\bigg\{G\!\in\mathbb{R}^{n\times m}\,\bigg|\,\exists
  	(\overline{U},\overline{V})\!\in\mathbb{O}^{n,m}(\overline{X})\cap\mathbb{O}^{n,m}(\overline{\Gamma}), A\in \mathbb{S}^{|\alpha|}, B\in \mathbb{R}^{|\alpha|\times |\beta_1|},C\in \mathbb{S}^{|\beta_1|}, \\
  	\left. D\in \mathbb{R}^{|\beta_0|\times |\beta_0|},E\in \mathbb{R}^{|\beta_0|\times |c|}\ {\rm and}\ \sigma_1([D\ \ E])\le\varpi\le \lambda_{|\beta_1|}(C)\right.\quad\\
  	\left. {\rm such\ that}\ \overline{U}^{\top}\!G\overline{V}=\begin{pmatrix}
  		A & B & 0 & 0 & 0 \\
  	B^\top  &  C & 0 &0 & 0 \\
  	0 & 0& \varpi I_{|\beta_+|} & 0 & 0  \\
  	0 & 0 & 0 & 	D& E 
  	\end{pmatrix}\right\}.    	 
  \end{align*}
 \end{theorem}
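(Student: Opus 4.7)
The plan is to apply Proposition \ref{TiltProp2} to the decomposition $f = \nu\vartheta + \Psi_{\kappa}$ and then identify the limiting set $\mathcal{W}$ appearing there with $\Upsilon$. Since $\Psi_{\kappa}$ is proper lsc convex, $\nabla^{2}(\nu\vartheta)(\cdot) = \nu\nabla^{2}\vartheta(\cdot) \succeq 0$ on $\mathcal{N}$, and first-order optimality of $\overline{X}$ gives $\overline{\Gamma} \in \partial\Psi_{\kappa}(\overline{X})$, Proposition \ref{TiltProp2} asserts that $\overline{X}$ is tilt-stable iff ${\rm Ker}\,\nabla^{2}\vartheta(\overline{X}) \cap \mathcal{W} = \{0\}$, where $\mathcal{W}$ is the set of $G$ admitting sequences $(X^{k},\Gamma^{k}) \in {\rm gph}\,\partial\Psi_{\kappa}$ and $G^{k} \to G$ with $(X^{k},\Gamma^{k}) \to (\overline{X},\overline{\Gamma})$ and $d^{2}\Psi_{\kappa}(X^{k}|\Gamma^{k})(G^{k}) \to 0$. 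The task thus reduces to proving $\mathcal{W} = \Upsilon$ in each of the three cases distinguished in the statement.

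For $\mathcal{W} \subseteq \Upsilon$, I would fix $G \in \mathcal{W}$. The finiteness of $d^{2}\Psi_{\kappa}(X^{k}|\Gamma^{k})(G^{k})$ for large $k$ gives $G^{k} \in \mathcal{C}_{\Psi_{\kappa}}(X^{k},\Gamma^{k})$, so by Lemma \ref{lemma-CCone} the corresponding block template holds at every $(X^{k},\Gamma^{k})$ with a scalar $\varpi^{k}$. Then pass to a subsequence along which (i) the distinct-singular-value pattern of $X^{k}$ and the pattern of $\sigma_{\cdot}(\Gamma^{k})$ on $a_{r^{k}}(X^{k})$ are constant and refine those at $(\overline{X},\overline{\Gamma})$; (ii) a choice of simultaneous SVDs $(U^{k},V^{k}) \in \mathbb{O}^{n,m}(X^{k}) \cap \mathbb{O}^{n,m}(\Gamma^{k})$, furnished by \cite[Corollary 2.5]{Lewis95}, converges by compactness of $\mathbb{O}^{n}\times\mathbb{O}^{m}$ to some $(\overline{U},\overline{V}) \in \mathbb{O}^{n,m}(\overline{X}) \cap \mathbb{O}^{n,m}(\overline{\Gamma})$; (iii) $\varpi^{k} \to \varpi \in \mathbb{R}$. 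Expanding $d^{2}\Psi_{\kappa}(X^{k}|\Gamma^{k})(G^{k})$ via Proposition \ref{MainTh2} as a finite sum of terms of the form (positive coefficient) times $\|{\rm block}\ {\rm of}\ (U^{k})^{\top}G^{k}V^{k}\|_{F}^{2}$, vanishing of the total forces every term whose coefficient is bounded below by a positive constant to vanish in the limit, which translates precisely into the zero blocks of the template for $\Upsilon$; terms whose coefficient degenerates to zero impose no constraint and yield the free blocks $A,B,C,D,E_{ij}$ and the diagonal $\varpi I_{|\beta_{+}|}$. Passing the Lemma \ref{lemma-CCone} constraint on $\varpi^{k}$ to the limit delivers $\lambda_{1}(\mathcal{S}(D)) \le \varpi \le \lambda_{|\beta_{1}|}(C)$ and its analogues in the remaining two cases.

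For the converse $\Upsilon \subseteq \mathcal{W}$, given $G \in \Upsilon$ with its block decomposition, I would construct approximating sequences that split the degeneracies at $(\overline{X},\overline{\Gamma})$: produce $X^{k} \to \overline{X}$ whose singular values inside each $\overline{a}_{l}$-group are pulled apart and, in the case $\overline{r} = s(\overline{X})+1$, also made slightly positive on $\beta_{0}$; and produce $\Gamma^{k} \in \partial\Psi_{\kappa}(X^{k})$ with $\Gamma^{k} \to \overline{\Gamma}$, chosen via Lemma \ref{KnorSubdifLemma} so that the entries $\sigma_{i}(\Gamma^{k})$ on the current $\beta_{+}$-indices are pushed to the endpoints $\{0,1\}$ (allowed because $\sigma_{i}(\overline{\Gamma}) \in (0,1)$ there). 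Then set $G^{k} = G$ and invoke Proposition \ref{MainTh2} at the perturbed points: each surviving positive coefficient now multiplies a block that the template of $\Upsilon$ already forces to be zero, whence $d^{2}\Psi_{\kappa}(X^{k}|\Gamma^{k})(G) \to 0$ and $G \in \mathcal{W}$. The free blocks of $G$ are absorbed because after splitting they sit in index pairs whose coefficient has degenerated to zero. The main obstacle is the bookkeeping: in each of the three cases one must match every positive coefficient in Proposition \ref{MainTh2} against the refined index partition to decide which contribute binding constraints in the limit and which do not, and then match the resulting block pattern of $\overline{U}^{\top}G\overline{V}$ against the template of $\Upsilon$. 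The subcase $\overline{r} = s(\overline{X})+1$ with $\|\overline{\Gamma}\|_{*} = \kappa$ is the most delicate, since the pseudo-inverse term $GV_{a}\Sigma_{a}^{-1}U_{a}^{\top}G$ in \eqref{KNormSubDer2} couples the $\beta_{0}$-block with the extra $c$-columns and is what produces the mixed scalar constraint $\sigma_{1}([D\ E]) \le \varpi \le \lambda_{|\beta_{1}|}(C)$.
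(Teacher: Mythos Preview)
Your overall scaffold is the same as the paper's: reduce via Proposition~\ref{TiltProp2} to proving $\mathcal{W}=\Upsilon$, then argue the two inclusions separately using Proposition~\ref{MainTh2} and Lemma~\ref{lemma-CCone}. Your sketch of $\mathcal{W}\subseteq\Upsilon$ is accurate and matches the paper's argument (pass to a subsequence with fixed index pattern, extract a limit $(\overline{U},\overline{V})$, read off the vanishing blocks from the nonnegative summands, and pass the critical-cone inequality on $\varpi^{k}$ to the limit). One refinement worth noting: some denominators $\nu_{l}(X^{k})-\nu_{l'}(X^{k})$ tend to $0$ rather than staying bounded below, so those terms do not merely ``impose no constraint'' --- the fact that the whole sum tends to $0$ forces the corresponding numerator to vanish as well. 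This is what produces, e.g., the symmetry of the $(\eta^{1},\eta^{3})$ sub-block inside $\beta$ in the paper's Appendix~A.

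The converse $\Upsilon\subseteq\mathcal{W}$ as you describe it has a genuine gap. You propose to take $\Gamma^{k}\to\overline{\Gamma}$ while simultaneously ``pushing the entries $\sigma_{i}(\Gamma^{k})$ on the current $\beta_{+}$-indices to the endpoints $\{0,1\}$.'' Since $\sigma_{i}(\overline{\Gamma})\in(0,1)$ for $i\in\beta_{+}$, these two requirements are incompatible: any such $\Gamma^{k}$ cannot converge to $\overline{\Gamma}$. Splitting all singular-value groups of $\overline{X}$ and lifting the zero singular values on $\beta_{0}$ is also unnecessary and makes the verification of $G^{k}\in\mathcal{C}_{\Psi_{\kappa}}(X^{k},\Gamma^{k})$ harder, not easier.

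The paper's construction is much simpler and avoids these issues: take $\Gamma^{k}\equiv\overline{\Gamma}$, $G^{k}\equiv G$, and
\[
X^{k}:=\overline{X}+\overline{U}\begin{pmatrix}0_{|\alpha|}&0&0\\0&\tfrac{1}{k}I_{|\beta_{1}|}&0\\0&0&0\end{pmatrix}\overline{V}^{\top},
\]
i.e.\ lift only the $\beta_{1}$ singular values. This separates $\beta_{1}$ from $\beta_{+}\cup\beta_{0}$ in the pattern of $X^{k}$, so the new $\widehat{\beta}=a_{r}(X^{k})$ is either $\beta_{+}\cup\beta_{0}$ (when $\kappa\notin\beta_{1}$, giving $\widehat{\beta}_{1}=\emptyset$) or $\beta_{1}$ (when $\kappa\in\beta_{1}$, giving $\widehat{\beta}_{+}=\widehat{\beta}_{0}=\emptyset$). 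In either sub-case one checks directly from Lemma~\ref{lemma-CCone} that the block template of $G\in\Upsilon$ places $G$ in $\mathcal{C}_{\Psi_{\kappa}}(X^{k},\Gamma^{k})$, and then Corollary~\ref{corollary3.3} applied at $(X^{k},\overline{\Gamma})$ gives $d^{2}\Psi_{\kappa}(X^{k}|\overline{\Gamma})(G)=0$ exactly (not merely $\to 0$). The same perturbation of $X^{k}$ works uniformly across all three cases $\overline{r}\in[s(\overline{X})]$, $\overline{r}=s(\overline{X})+1$ with $\|\overline{\Gamma}\|_{*}<\kappa$, and $\overline{r}=s(\overline{X})+1$ with $\|\overline{\Gamma}\|_{*}=\kappa$; no perturbation of $\overline{\Gamma}$ is needed anywhere.
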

 \begin{proof}
 From Proposition \ref{TiltProp2} with $\mathbb{X}=\mathbb{R}^{n\times m},\varphi=\nu\vartheta$ and $g=\Psi_{\kappa}$, it follows that $\overline{X}$ is a tilt-stable solution of \eqref{KnormRegular}  if and only if
 ${\rm Ker}\,\nabla^2\vartheta(\overline{X})\cap \mathcal{G}=\{0\}$ with 
 \begin{align*}
 \mathcal{G}=\big\{G\in \mathbb{R}^{n\times m}\ |\ \exists (X^k,\Gamma^k)\in {\rm gph}\,\partial \Psi_{\kappa}\ {\rm and}\ G^k\in \mathbb{R}^{n\times m}\ \  {\rm such\ that}\qquad\qquad\qquad\\
 \lim_{k\to \infty}(X^k,\Gamma^k,G^k)= (\overline{X},\overline{\Gamma},G)\ {\rm and}\ \lim_{k\to \infty} d^2 \Psi_{\kappa}(X^k|\Gamma^k)(G^k)=0 \big\}.
 \end{align*}
 Therefore, it suffices to prove that $\mathcal{G}=\Upsilon$. We first argue that $\mathcal{G}\subset\Upsilon$. Pick any $G\in \mathcal{G}$. Then, there exist sequences $\{(X^k,\Gamma^k)\}_{k\in\mathbb{N}}\subset{\rm gph}\, \partial \Psi_{\kappa}$ and $\{G^k\}_{k\in\mathbb{N}}\subset\mathbb{R}^{n\times m}$ such that
 \begin{equation}\label{SecDerCase1}
 \lim_{k\to \infty}(X^k,\Gamma^k,G^k)= (\overline{X},\overline{\Gamma},G)\ \ {\rm and}\ \lim_{k\to \infty} d^2 \Psi_{\kappa}(X^k|\Gamma^k)(G^k)=0. 
 \end{equation}  
 By Proposition \ref{tepi-diff}, for each $k\in\mathbb{N}$, ${\rm dom}\,d^2\Psi_{\kappa}(X^k|\Gamma^k)=\mathcal{C}_{\Psi_{\kappa}}(X^k,\Gamma^k)$. The second limit in \eqref{SecDerCase1} implies $G^k\in\mathcal{C}_{\Psi_{\kappa}}(X^k,\Gamma^k)$ for each $k\in\mathbb{N}$. From $\Gamma{^k}\in \partial \Psi_{\kappa}(X{^k})$, the convexity of $\Psi_k$ and \cite[Corollary 2.5]{Lewis95}, for each $k\in\mathbb{N}$, there exists $(U^k,V^k)\in \mathbb{O}^{n,m}(X^k)\cap \mathbb{O}^{n,m}(\Gamma^k)$. Note that $\{(U{^k},V{^k})\}_{k\in\mathbb{N}}$ is bounded, and the multifunction $\mathbb{R}^{n\times m}\ni Z\rightrightarrows\mathbb{O}^{n,m}(Z)$ is outer semicontinuous by \cite[Lemma 2.1]{Cui2017}. There exist an infinite index set $\mathcal{K}\subset \mathbb{N}$ such that  $\lim_{\mathcal{K}\ni k\to\infty}(U{^k},V{^k})=(\overline{U},\overline{V})$ for some $(\overline{U},\overline{V})\in \mathbb{O}^{n,m}(\overline{X})\cap \mathbb{O}^{n,m}(\overline{\Gamma})$. Obviously, 
\begin{equation*}
  \overline{X}=\overline{U}
  \big[{\rm Diag}(\sigma(\overline{X}))\ \ 0\big]\overline{V}^\top
  \text{\; and \;}\overline{\Gamma}=\overline{U}\big[{\rm Diag}(\sigma(\overline{\Gamma}))\ \ 0\big]\overline{V}^\top
 \end{equation*}
 with $\overline{V}=[\overline{V}_1\ \ \overline{V}_{\!c}]$ for $\overline{V}_1\in\mathbb{O}^{m\times n}$. In the following,  we argue that $G\in\Upsilon$ by three cases. 
 
 \noindent
 {\bf Case 1: $\overline{r}\in [ s(\overline{X})]$}. Recall that $\overline{\Gamma}\in\partial\Psi_{\!\kappa}(\overline{X})$. By Lemma \ref{KnorSubdifLemma} (i) with $(X,\Gamma)=(\overline{X},\overline{\Gamma})$, there exist integers $\overline{\kappa}_0$ and $\overline{\kappa}_1$ with $0\le \overline{\kappa}_0\le\kappa-1$ and $\kappa\le\overline{\kappa}_1\le n$ such that  
 \begin{align}\label{case1-relation1}
 \sigma_1(\overline{X})\geq \cdots\geq \sigma_{\overline{\kappa}_0}(\overline{X})>\sigma_{\overline{\kappa}_0+1}(\overline{X})=\cdots=\sigma_{\kappa}(\overline{X})=\cdots=\sigma_{\overline{\kappa}_1}(\overline{X})\qquad\\
  >\sigma_{\overline{\kappa}_1+1}(\overline{X})\geq \cdots \geq \sigma_n(\overline{X})\geq 0,\qquad\\	
  \label{case1-relation2}
  \sigma_{\alpha}(\overline{\Gamma})={\color{blue}e_{\alpha}},\ {\textstyle\sum_{i\in \beta}}\,\sigma_i(\overline{\Gamma})=\kappa-\overline{\kappa}_0\ {\rm with}\ 0\leq\sigma_{\beta}(\overline{\Gamma})\leq {\color{blue}e_{\beta}},\ \ {\rm and}\ \ \sigma_{\gamma}(\overline{\Gamma})=0,
 \end{align}
 where $[\overline{\kappa}_0]=\alpha, \{\overline{\kappa}_0\!+\!1,\ldots,\overline{\kappa}_1\}=\beta$ and $ \{\overline{\kappa}_1\!+\!1,\ldots,n\}=\gamma$.
 Since $\lim_{\mathcal{K}\ni k\to\infty}\sigma(X^k)=\sigma(\overline{X})$, there must exist integers $\kappa_0$ and $\kappa_1$ with $\overline{\kappa}_0\leq \kappa_0\le\kappa\!-\!1$ and $\kappa\leq \kappa_1\leq\overline{\kappa}_1$ and an infinite index set $\widehat{\mathcal{K}}\subset\mathcal{K}$ such that for all $k\in\widehat{\mathcal{K}}$, 
  \begin{align}\label{XjDecrease1}
  \sigma_1(X^k)\geq \cdots\geq \sigma_{\kappa_0}(X^k)>\sigma_{\kappa_0+1}(X^k)=\cdots\!=\!\sigma_\kappa(X^k)=\cdots=\sigma_{\kappa_1}(X^k)\qquad\qquad\nonumber\\
  >\sigma_{\kappa_1+1}(X^k)\geq \cdots \geq \sigma_n(X^k)\geq 0.\qquad\qquad\qquad
 \end{align}
 Together with $\Gamma{^k}\in\partial \Psi_{\kappa}(X^k)$ and Lemma \ref{KnorSubdifLemma} (i) for $(X,\Gamma)=(X^k,\Gamma^k)$, for each $k\in\widehat{\mathcal{K}}$,
 \begin{align}\label{GamajDecrease1}
  \sigma_{\widehat{\alpha}}(\Gamma^k)={\color{blue}e_{\widehat{\alpha}}},\ 
 {\textstyle\sum_{i\in\widehat{\beta}}}\,\sigma_i(\Gamma^k)=\kappa-\kappa_0\ \ {\rm with}\ 0\leq \sigma_{\widehat{\beta}}(\Gamma^k)\leq {\color{blue}e_{\widehat{\beta}}},\ {\rm and}\ \ \sigma_{\widehat{\gamma}}(\Gamma^k)=0
 \end{align}
 where $\widehat{\alpha}\!:=\{1,\ldots,\kappa_0\},\widehat{\beta}\!:=\!\big\{\kappa_0+\!1,\ldots,\kappa_1\big\}$ and $\widehat{\gamma}\!:=\{\kappa_1\!+\!1,\ldots,n\}$.
 Then, for all $k\in\widehat{\mathcal{K}}$,  $\big\{i\in\beta\,|\,\sigma_i(X^k)>\sigma_{\kappa}(X^k)\big\}\!=\{\overline{\kappa}_0\!+\!1,\ldots,\kappa_0\}\!:=\eta^1,\big\{i\in\beta\,|\,\sigma_i(X^k)=\sigma_{\kappa}(X^k)\big\}=\widehat{\beta}$ and $\big\{i\in\beta\,|\,\sigma_i(X^k)<\sigma_{\kappa}(X^k)\big\}=\big\{\kappa_1\!+\!1,\ldots,\overline{\kappa}_1\big\}\!:=\eta^3$. Moreover, if necessary by taking an infinite subset of $\widehat{\mathcal{K}}$, the index sets $\{i\in\widehat{\beta}\,|\, \sigma_{i}(\Gamma^k)=1\}, \{i\in\widehat{\beta}\,|\, 0<\sigma_{i}(\Gamma^k)<1\}$ and $\{i\in\widehat{\beta}\,|\, \sigma_{i}(\Gamma^k)=0\}$ for all $k\in\widehat{\mathcal{K}}$ are independent of $k$. For convenience, write  
 \begin{equation*}
 \widehat{\beta}_1:=\{i\in \widehat{\beta}\,|\, \sigma_{i}(\Gamma^k)=1\},\widehat{\beta}_+:=\{i\in \widehat{\beta}\,|\, 0<\sigma_{i}(\Gamma^k)<1\},\ 
 \widehat{\beta}_0:=\{i\in \widehat{\beta}\,|\, \sigma_{i}(\Gamma^k)=0\}.
 \end{equation*} 	
 Let $\widehat{\beta}_{+}^1\!:=\{i\in\widehat{\beta}_+\,|\, \sigma_{i}(\overline{\Gamma})=1\}, \widehat{\beta}_{+}^0\!:=\{i\in\widehat{\beta}_+\,|\, \sigma_{i}(\overline{\Gamma})=0\},\beta_{+}\!:=\{i\in\widehat{\beta}_+\,|\, \sigma_{i}(\overline{\Gamma})\in(0,1)\}$. 
 From the definitions of the above index sets, it is not difficult to infer that
 \begin{align}\label{relation41}
  \widehat{\alpha}=\alpha\cup \eta^1,\ 
  \widehat{\gamma}=\gamma\cup \eta^3,\ 
  \beta=\beta_1\cup\beta_0\cup\beta_{+}=\eta^1\cup\widehat{\beta}\cup\eta^3,\
  \widehat{\beta}=\widehat{\beta}_1\cup\widehat{\beta}_{+}\cup\widehat{\beta}^0,\\
  \label{relation42}
  \widehat{\beta}_+= \widehat{\beta}_{+}^1\cup \beta_+\cup \widehat{\beta}_{+}^0,\
  \beta_1=\eta^1\cup \widehat{\beta}_1\cup \widehat{\beta}_{+}^1,\ \beta_0=\widehat{\beta}_{+}^0\cup \widehat{\beta}_0\cup\eta^3.\qquad\qquad
 \end{align}
 From \eqref{XjDecrease1}, for each $k\in\widehat{\mathcal{K}}$, $s(X^k)$ is independent of $k$ and so is $a_l(X^k)$ for each $l\in\![s(X^k)]$, where $a_l(X^k)$ is the index set defined by \eqref{al} with $X=X^k$. Write $\widehat{a}_l:=a_l(X^k)$ for all $k\in\widehat{\mathcal{K}}$ and each $l\in[s(X^k)]$, and let $r\in[s(X^k)+1]$ be such that $\kappa\in\widehat{a}_{r}$. From the above \eqref{XjDecrease1}, obviously, $\widehat{a}_{r}=\widehat{\beta}$. Recalling that $\overline{r}\in s(\overline{X})$ and $\kappa\in \overline{a}_{\overline{r}}$, we infer that $r\in[s(X^k)]$. For each $k\in\widehat{\mathcal{K}}$, let $\zeta_1(\Gamma^k)>\cdots>\zeta_{q}(\Gamma^k)$ be the nonzero distinct entries in the set $\{\sigma_i(\Gamma^k)\ |\ i\in \widehat{a}_r\}$, and for each $l\in\{2,\ldots,q\}$, let $\beta_q(\Gamma^k)$ be defined by \eqref{beta_l} with $\Gamma=\Gamma^k$, which is also independent of $k$. Let $\widehat{\beta}_q:=\beta_q(\Gamma^k)$. Clearly, $\widehat{\beta}=\bigcup_{j=0}^q\widehat{\beta}_j$. 
  From Proposition \ref{MainTh2} with $(X,\Gamma)=(X^k,\Gamma^k)$, for every $k\in\widehat{\mathcal{K}}$, $d^2\Psi_{\!\kappa}(X^k|\Gamma^k)(G^k)$ equals
  \begin{align}\label{LimitCase1}
   &\sum_{l=1}^{r-1}\sum_{l'=r+1}^{ s(X^k)+1}
  	\!\frac{2\|[\mathcal{S}((U^k)^{\top}\!G^k V_{1}^k)]_{\widehat{a}_{l}\widehat{a}_{l'}}\|_F^2}{\nu_l(X^k)\!-\!\nu_{l'}(X^k)}+\!\sum_{l=1}^{r-1}\sum_{j=1}^{q}\!\frac{2(1\!-\!\zeta_j(\Gamma^k))}{\nu_l(X^k)\!-\!\nu_{r}(X^k)}\| [\mathcal{S}((U^k)^{\top}\!G^kV_{1}^k)]_{\widehat{a}_{l}\widehat{\beta}_{j}}\|_F^2\nonumber\\
  & +\sum_{j=1}^{q}\sum_{l'=r+1}^{ s(X^k)+1}\!\frac{2\zeta_{j}(\Gamma^k)\| [\mathcal{S}((U^k)^{\top}\! G^k V_1^k)]_{\widehat{\beta}_{j}\widehat{a}_{l'}}\|_F^2}{\nu_{r}(X^k)\!-\!\nu_{l'}(X^k)}+\sum_{l=1}^{r-1}\frac{\|(U_{\!\widehat{a}_l}^k)^\top G^kV_{c}^k\|_F^2}{\nu_l(X^k)} \nonumber\\
  & +\sum_{j=1}^{q}\sum_{l'=1}^{ s(X^k)+1}
  	\frac{2\zeta_{j}(\Gamma^k)\|[\mathcal{T}((U^k)^{\top}\! G^kV_1^k)]_{\widehat{\beta}_j\widehat{a}_{l'}}\|_F^2}{\nu_{r}(X^k)+\nu_{l'}(X^k)}+\sum_{j=1}^{q}\frac{\zeta_{j}(\Gamma^k)}{\nu_{r}(X^k)}\|(U_{\!\widehat{\beta}_j}^k)^\top G^kV_{c}^k\|_F^2
  	\nonumber\\
  &+\sum_{l=1}^{r-1}\sum_{l'=1}^{ s(X^k)+1}\!\frac{2\|[\mathcal{T}((U^k)^{\top}\!G^k V_1^k)]_{\widehat{a}_{l}\widehat{a}_{l'}}\|_F^2}{\nu_l(X^k)\!+\!\nu_{l'}(X^k)}+\sum_{l=1}^{r-1}
  	\frac{2\|[\mathcal{S}((U^k)^{\top}\!G^k V_{1}^k)]_{\widehat{a}_{l}\widehat{\beta}_{0}}\|_F^2}{\nu_l(X^k)\!-\!\nu_{r}(X^k)}.
 \end{align}
 Together with  $\lim_{\widehat{\mathcal{K}}\ni k\to\infty}d^2\Psi_{\kappa}(X^k|\Gamma^k)(G^k)=0$ by \eqref{SecDerCase1}, we have the following relations   
  \begin{subnumcases}{}\label{R1Stru1}
   (\overline{U}^{\top}\!G \overline{V}_1)_{(\widehat{\alpha}\cup\widehat{\beta}_1\cup\widehat{\beta}^1_{+}\cup \beta_{+})(\widehat{\alpha}\cup\widehat{\beta}_1\cup\widehat{\beta}^1_{+}\cup \beta_{+})}\in \mathbb{S}^{|\widehat{\alpha}|+|\widehat{\beta}_1|+|\widehat{\beta}^1_+|+|\beta_+|},\\
  (\overline{U}^{\top}\!G \overline{V}_1)_{(\widehat{\beta}_1\cup\widehat{\beta}^1_{+}\cup\beta_+)(\widehat{\beta}^0_{+}\cup\widehat{\beta}_0)}=(\overline{U}^{\top}\!G \overline{V}_{\!1})_{(\widehat{\beta}^0_{+}\cup\widehat{\beta}_0)(\widehat{\beta}_1\cup\widehat{\beta}^1_{+}\cup \beta_+)}^\top,\label{R1Stru2}\\
  (\overline{U}^{\top} G \overline{V}_1)_{\widehat{\alpha}(\beta_+\cup \widehat{\beta}^0_{+})}=(\overline{U}^{\top}\!G\overline{V}_1)_{(\beta_+\cup \widehat{\beta}^0_{+})\widehat{\alpha}}^\top=0,\label{R1Stru3} \\
  (\overline{U}^{\top} G \overline{V}_1)_{\widehat{\alpha}\widehat{\beta}_0}=(\overline{U}^{\top}\!G \overline{V}_1)_{\widehat{\beta}_0\widehat{\alpha}}^\top=0,
  	\label{R1Stru4}\\
  (\overline{U}^{\top} G \overline{V}_1)_{\widehat{\alpha}\widehat{\gamma}}=(\overline{U}^{\top}\!G \overline{V}_1)_{\widehat{\gamma}\widehat{\alpha}}^\top=0,
  	\label{R1Stru5}\\
  (\overline{U}^{\top} G \overline{V}_1)_{(\widehat{\beta}_{1}\cup \widehat{\beta}^1_{+}\cup\beta_+)\widehat{\gamma}}=(\overline{U}^{\top}\!G \overline{V}_1)_{\widehat{\gamma}(\widehat{\beta}_{1}\cup\widehat{\beta}^1_{+}\cup\beta_{+})}^\top=0,\label{R1Stru6}
  	\\ 
  (\overline{U}^{\top}\!G \overline{V}_{\!1})_{\widehat{\alpha} c}=0,\,
  (\overline{U}^{\top}\!G \overline{V}_{\!1})_{\widehat{\beta}_1 c}=0,\,
  (\overline{U}^{\top}\!G \overline{V}_{\!1})_{(\widehat{\beta}^1_{+}\cup \beta_+) c}=0,
  \label {R1Stru7}
 \end{subnumcases}
 and the detailed arguments are put in Appendix A. In addition, for each $k\in \widehat{\mathcal{K}}$, since $r\in[s(X^k)]$ and $G{^k}\in\mathcal{C}_{\Psi_{\kappa}}(X^k,\Gamma^k)$, by Lemma \ref{lemma-CCone} (i) there exists $\varpi_k\in \mathbb{R}$ such that
 \begin{align*}
 \lambda_1\big[\mathcal{S}\big((U_{\widehat{\beta}_{0}}^k)^{\top}\!G^k V_{\widehat{\beta}_{0}}^k\big)\big]\leq\varpi_k\leq \lambda_{|\widehat{\beta}_1|}\big[\mathcal{S}\big((U_{\widehat{\beta}_1}^k)^{\top}\!G^kV_{\widehat{\beta}_1}^k\big)\big],\qquad\qquad\qquad\qquad\\
 \mathcal{S}\big((U_{\widehat{\beta}}^k)^{\top}\!G^k V_{\widehat{\beta}}^k\big)=\mathcal{S}\big((U_{\widehat{a}_r}^k)^{\top}\!G^k V_{\widehat{a}_r}^k\big)=\left[\begin{matrix}
    	\mathcal{S}\big((U_{\widehat{\beta}_1}^k)^{\top} G^k V_{\widehat{\beta}_1}^k\big)& 0 & 0\\
    	0 & \varpi_k I_{|\widehat{\beta}_{+}|} & 0 \\
    	0& 0 & 	\mathcal{S}\big((U_{\widehat{\beta}_{0}}^k)^{\top}\! G^kV_{\widehat{\beta}_{0}}^k\big)
 \end{matrix}\right].
 \end{align*}
 If necessary by taking a subsequence, we assume $\lim_{\widehat{\mathcal{K}}\ni k\to\infty}\varpi_k=\varpi$. Then, it holds
 \begin{align}\label{temp0-varpi}
 \lambda_1\big([\mathcal{S}(\overline{U}^{\top}\!G\overline{V}_{\!1})]_{\widehat{\beta}_0\widehat{\beta}_0}\big)\leq\varpi\leq \lambda_{|\widehat{\beta}_1|}\big([\mathcal{S}(\overline{U}^{\top}\!G\overline{V}_{\!1})]_{\widehat{\beta}_1\widehat{\beta}_1}\big),\qquad\qquad\quad\\ 	
 \label{CriticalConeCase1}
  \lim_{\widehat{\mathcal{K}}\ni k\to \infty}\mathcal{S}\big((U_{\widehat{\beta}}^k)^{\top}\! G^kV_{\widehat{\beta}}^k\big)
  =\left[\begin{matrix}
 	[\mathcal{S}(\overline{U}^{\top}\!G\overline{V}_{\!1})]_{\widehat{\beta}_1\widehat{\beta}_1}& 0 & 0\\
   	0 & \varpi I_{|\widehat{\beta}_{+}^1\cup \beta_+\cup \widehat{\beta}_{+}^0|} & 0 \\
   	0& 0 & 	[\mathcal{S}(\overline{U}^{\top}\!G\overline{V}_{\!1})]_{\widehat{\beta}_0\widehat{\beta}_0}
    \end{matrix}\right].
 \end{align}
 Take $C=(\overline{U}^{\top}\!G\overline{V}_{\!1})_{(\eta^1\cup \widehat{\beta}_1\cup \widehat{\beta}_{+}^1)(\eta^1\cup \widehat{\beta}_1\cup \widehat{\beta}_{+}^1)}$ and $D=\begin{pmatrix}
 \varpi I_{|\widehat{\beta}_{+}^0|} & 0\\
 0 & (\overline{U}^{\top}\!G\overline{V}_{\!1})_{(\widehat{\beta}_0\cup\eta^3)(\widehat{\beta}_0\cup\eta^3)} \\
 \end{pmatrix}$. Note that $\sigma_i\big[(U_{\widehat{\beta}_0\cup\eta^3}^k)^{\top}\!G^k V_{\widehat{\beta}_0\cup\eta^3}^k\big]$ for $i\in[|\widehat{\beta}_0|]$ is not less than $\sigma_i\big[(U_{\widehat{\beta}_0\cup\eta^3}^k)^{\top}\!G^k V_{\widehat{\beta}_0\cup\eta^3}^k\big]$ for $i=|\widehat{\beta}_0|+1,\ldots,|\widehat{\beta}_0\cup\eta^3|$. Hence, $\lambda_1\big[\mathcal{S}\big((U_{\widehat{\beta}_0\cup\eta^3}^k)^{\top}\!G^k V_{\widehat{\beta}_0\cup\eta^3}^k\big)\big]=\lambda_1\big[\mathcal{S}\big((U_{\widehat{\beta}_0}^k)^{\top}\!G^k V_{\widehat{\beta}_0}^k\big)\big]$ for all $k\in\widehat{\mathcal{K}}$. 
 Passing the limit $\widehat{\mathcal{K}}\ni k\to\infty$ and using the expression of $D$ leads to $\lambda_1(\mathcal{S}(D))\le\lambda_1\big([\mathcal{S}(\overline{U}^{\top}\!G\overline{V}_{\!1})]_{\widehat{\beta}_0\widehat{\beta}_0}\big)$. Together with the expression of $C$ and the above \eqref{temp0-varpi}, we have $\lambda_1(\mathcal{S}(D))\le\varpi\le\lambda_{|\beta_1|}(C)$. 
 Take $A=(\overline{U}^{\top}\!G\overline{V}_{\!1})_{\alpha\alpha}$, $B=  (\overline{U}^{\top}\!G\overline{V}_{\!1})_{\alpha(\eta^1\cup \widehat{\beta}_1\cup \widehat{\beta}_{+}^1)}$ and 
 \begin{align*}
  [E_{11} \ \ E_{12}]&=
 	\begin{pmatrix}
 		0_{|\widehat{\beta}_{+}^0|} & 0\\
 		(\overline{U}^{\top}\!G\overline{V}_{\!1})_{(\widehat{\beta}_0\cup\eta^3)\gamma}& (\overline{U}^{\top}\!G\overline{V}_{\!1})_{(\widehat{\beta}_0\cup\eta^3)c}\end{pmatrix},\\
 [E_{21} \ \ E_{22}]&=[(\overline{U}^{\top}\!G\overline{V}_{\!1})_{\gamma\gamma}\ \ (\overline{U}^{\top}\!G\overline{V}_{\!1})_{\gamma c}].
 \end{align*}
 Along with \eqref{R1Stru1}-\eqref{R1Stru7} and \eqref{CriticalConeCase1}, the relations $\widehat{\alpha}=\alpha\cup\eta^1, \beta_1=\eta^1\cup\widehat{\beta}_1\cup\widehat{\beta}_{+}^1$ and $\beta_0=\widehat{\beta}_{+}^0\cup \widehat{\beta}_0\cup\eta^3$ in \eqref{relation41}-\eqref{relation42}, and the expression of $\Upsilon$ in this case, we have $G\in\Upsilon$.
       
 \noindent     
 {\bf Case 2: $\overline{r}= s(\overline{X})+1$ and $\|\overline{\Gamma}\|_*<\kappa$.} 
 By invoking Lemma \ref{KnorSubdifLemma} (ii) with $(X,\Gamma)=(\overline{X},\overline{\Gamma})$, there exists an integer $\overline{\kappa}_0$ with $0\le \overline{\kappa}_0\le\kappa-1$ such that  
 \begin{align}\label{case2-Xbar}
 \sigma_1(\overline{X})\geq \cdots\geq \sigma_{\overline{\kappa}_0}(\overline{X})>\sigma_{\overline{\kappa}_0+1}(\overline{X})=\cdots=\sigma_{\kappa}(\overline{X})=\cdots=\sigma_n(\overline{X})=0,\\
 \label{case2-Gbar}
 \sigma_{\alpha}(\overline{\Gamma})={\color{blue}e_{\alpha}}\ \ {\rm and}\ \ 
  {\textstyle\sum_{i\in\beta}}\,\sigma_i(\overline{\Gamma})\leq \kappa-\overline{\kappa}_0\ \ {\rm with}\ \ 0\leq\sigma_{\beta}(\overline{\Gamma})\le{\color{blue} e_{\beta}},\qquad
 \end{align}
 where $[\overline{\kappa}_0]=\alpha$ and $\{\overline{\kappa}_0+1,\ldots,n\}=\beta$. Now there exists an infinite index set $\widetilde{\mathcal{K}}\subset\mathcal{K}$ such that $\sigma_{\kappa}(X^k)=0$ for each $k\in\widetilde{\mathcal{K}}$. If not, there will exist  $\overline{k}\in\mathbb{N}$ such that $\sigma_{\kappa}(X^k)>0$ for all $\mathcal{K}\ni k>\overline{k}$. By Lemma \ref{KnorSubdifLemma} (i) with $(X,\Gamma)=(X^k,\Gamma^k)$ for each $k\in\mathcal{K}$, we have $\|\Gamma^k\|_*=\kappa$ for all $\mathcal{K}\ni k>\overline{k}$. On the other hand, from $\lim_{\mathcal{K}\ni k\to\infty}\|\Gamma^k\|_*=\|\overline{\Gamma}\|_*<\kappa$, there exists $\widehat{k}\in\mathbb{N}$ such that $\|\Gamma^k\|_*<\kappa$ for all $\mathcal{K}\ni k>\widehat{k}$. Then, for all $\mathcal{K}\ni k>\max\{\overline{k},\widehat{k}\}$, $\kappa=\|\Gamma^k\|_*<\kappa$, which is impossible. Since $\lim_{\widetilde{\mathcal{K}}\ni k\to\infty}\sigma(X^k)=\sigma(\overline{X})$, there is an integer $\kappa_0$ with $\overline{\kappa}_0\leq \kappa_0\leq \kappa-1$ and an infinite index set $\widehat{\mathcal{K}}\subset\widetilde{\mathcal{K}}$ such that for all $k\in\widehat{\mathcal{K}}$, 
 \begin{align}\label{XjDecrease2}
 \sigma_1(X^k)\geq \cdots\geq \sigma_{\kappa_0}(X^k)>\sigma_{\kappa_0+1}(X^k)=\cdots=\sigma_{\kappa}(X^k)=\cdots= \sigma_n(X^k)= 0,\\
 \sigma_{\widehat{\alpha}}(\Gamma^k)={\color{blue}e_{\widehat{\alpha}}}\ \ {\rm and}\ \ 
 {\textstyle\sum_{i\in \widehat{\beta}}}\,\sigma_i(\Gamma^k)\leq \kappa-\kappa_0\ \ {\rm with}\  \  0\leq \sigma_{\widehat{\beta}}(\Gamma^k)\le{\color{blue} e_{\widehat{\beta}}},\qquad
 \label{XjDecrease3}
 \end{align}
 where $\widehat{\alpha}\!:=\![\kappa_0]$ and $\widehat{\beta}\!:=\!\{\kappa_0\!+1,\ldots,n\}$. 
 Then, following the same arguments as those for Case 1, the relations in \eqref{relation41}-\eqref{relation42} still hold with $\widehat{\gamma}=\emptyset$ and $\eta^3=\emptyset$, i.e.,
 \begin{align}\label{relation421}
  \widehat{\alpha}=\alpha\cup \eta^1,\ 
  \beta=\beta_1\cup\beta_0\cup\beta_{+}=\eta^1\cup\widehat{\beta},\
  \widehat{\beta}=\widehat{\beta}_1\cup\widehat{\beta}_{+}\cup\widehat{\beta}^0,\\
  \label{relation422}
  \widehat{\beta}_+= \widehat{\beta}_{+}^1\cup \beta_+\cup \widehat{\beta}_{+}^0,\
  \beta_1=\eta^1\cup \widehat{\beta}_1\cup \widehat{\beta}_{+}^1,\ \beta_0=\widehat{\beta}_{+}^0\cup \widehat{\beta}_0.
  \end{align} 
 From \eqref{XjDecrease2}, for each $k\in\widehat{\mathcal{K}}$, $s(X^k)$ is indenpendent of $k$ and so is $a_l(X^k)$ for each $l\in[ s(X^k)]$, where $a_l(X^k)$ is the index set defined by \eqref{al} with $X=X^k$. For each $l\in[ s(X^k)]$, we write $\widehat{a}_l:=a_l(X^k)$. From \eqref{XjDecrease2} and $\sigma_{\kappa}(X^k)=0$ for each $k\in\widehat{\mathcal{K}}$, we have $\kappa\in\widehat{a}_{s(X^k)+1}$. Let $r:=s(X^k)+1$. Let $\zeta_1(\Gamma^k)>\cdots>\zeta_{q}(\Gamma^k)$ be the nonzero distinct entries of the set $\{\sigma_i(\Gamma^k)\ |\ i\in \widehat{a}_r\}$, and for each $l\in\{2,\ldots,q\}$, let  $\beta_q(\Gamma^k)$ be defined by \eqref{beta_l} with $\Gamma=\Gamma^k$, which is also independent of $k$. Write $\widehat{\beta}_q:=\beta_q(\Gamma^k)$. Clearly, $\widehat{\beta}=\bigcup_{j=0}^q\widehat{\beta}_j$.
  
 For each $k\in\widehat{\mathcal{K}}$, by Proposition \ref{MainTh2} with $(X,\Gamma)=(X^k,\Gamma^k)$,  $d^2\Psi_{\kappa}(X^k|\Gamma^k)(G^k)$ equals
 \begin{align}\label{case2-3}
  &\sum_{l=1}^{r-1}\sum_{j=1}^{q}\frac{2(1\!-\!\zeta_{j}(\Gamma^k))}{\nu_l(X^k)}\| [\mathcal{S}((U^k)^{\top}\!G^kV_{1}^k)]_{\widehat{a}_{l}\widehat{\beta}_{j}}\|_F^2+\sum_{l=1}^{r-1}\sum_{l'=1}^{r-1}
 	\frac{2\|[\mathcal{T}((U^k)^{\top}\!G^kV_{1}^k)]_{\widehat{a}_{l}\widehat{a}_{l'}}\|_F^2}{\nu_l(X^k)\!+\!\nu_{l'}(X^k)}\nonumber\\
  &+\!\sum_{l=1}^{r-1}\!\frac{2\| [\mathcal{S}((U^k)^{\top}\!G^kV_{1}^k)]_{\widehat{a}_{l}\widehat{\beta}_{0}}\|_F^2}{\nu_l(X^k)}+\!\sum_{l=1}^{r-1}\frac{\|(U^k)^\top_{\widehat{a}_l}G^kV_{c}^k\|_F^2}{\nu_l(X^k)}\\
  & +\!\sum_{l=1}^{r-1}\sum_{j=0}^{q}
 	\frac{2(1\!+\!\zeta_{j}(\Gamma^k))}{\nu_l(X^k)}\|[\mathcal{T}((U^k)^{\top}\!G^kV_{1}^k)]_{\widehat{a}_{l}\widehat{\beta}_{j}}\|_F^2. \nonumber	
 \end{align} 
 Together with $\lim_{\widehat{\mathcal{K}}\ni k\to\infty}d^2\Psi_{\kappa}(X^k|\Gamma^k)(G^k)=0$ by \eqref{SecDerCase1}, we have the following relations
 \begin{subnumcases}{}\label{R2Stru1}
 (\overline{U}^{\top} G \overline{V}_1)_{\widehat{\alpha}\widehat{\alpha}}\in \mathbb{S}^{|\alpha|},\,
  (\overline{U}^{\top}\!G \overline{V}_1)_{\widehat{\alpha}(\widehat{\beta}_1\cup \widehat{\beta}_{+}^1)}=(\overline{U}^{\top}\!G\overline{V}_1)_{(\widehat{\beta}_1\cup \widehat{\beta}_{+}^1)\widehat{\alpha}}^\top,\\ \label{R2Stru2}
  (\overline{U}^{\top}\!G\overline{V}_{\!1})_{\widehat{\alpha} c}=0,\,
  (\overline{U}^{\top}\!G\overline{V}_{\!1})_{\widehat{\alpha}(\beta_+\cup \widehat{\beta}_{+}^0)}=(\overline{U}^{\top}\!G\overline{V}_{\!1})_{(\beta_+\cup \widehat{\beta}_{+}^0)\widehat{\alpha}}^\top=0, \\ 
  \label{R2Stru3}
  (\overline{U}^{\top}\!G\overline{V}_{\!1})_{\widehat{\alpha}\widehat{\beta}_0}=(\overline{U}^{\top}\!G\overline{V}_{\!1})_{\widehat{\beta}_0\widehat{\alpha}}^\top=0,
  \end{subnumcases}
  whose proof is included in Appendix B. In addition, for each $k\in\widehat{\mathcal{K}}$, since $\|\Gamma^k\|_*<\kappa$ and $G^k\in\mathcal{C}_{\Psi_{\kappa}}(X^k,\Gamma^k)$, according to Lemma \ref{lemma-CCone} (ii), we have $\mathcal{S}((U_{\widehat{\beta}_1}^k)^{\top}\!G^kV^k_{\widehat{\beta}_1})\in \mathbb{S}_+^{|\widehat{\beta}_1|}$ and
  \begin{equation*}
   \big[(U_{\widehat{\beta}}^k)^{\top}\!G^k V_{\widehat{\beta}}^k\ \ (U_{\widehat{\beta}}^k)^\top\!G^k V_c^k\big]=\!\left[\begin{matrix}
  	\mathcal{S}((U_{\widehat{\beta}_1}^k)^{\top}\!G^kV^k_{\widehat{\beta}_1})& 0 & 0 & 0\\
  	0 & 0 & 0 & 0 \\
  	0& 0 & 	0 & 0
  \end{matrix}\right].
  \end{equation*}
 Passing the limit $\widehat{\mathcal{K}}\ni k\to\infty$ to the above inclusion and equality leads to 
 \begin{equation}\label{CriticalConeCase2}
  \mathcal{S}(\overline{U}_{\widehat{\beta}_1}^{\top}\!G\overline{V}_{\!\widehat{\beta}_1})\in \mathbb{S}_+^{|\widehat{\beta}_1|}\ \ {\rm and}\ \ \big[\overline{U}_{\widehat{\beta}}^{\top}\!G \overline{V}_{\!\widehat{\beta}}\ \ \overline{U}_{\widehat{\beta}}^\top G \overline{V}_{\!c}\big]
 =\!\left[\begin{matrix}
  		[\mathcal{S}(\overline{U}^{\top}\!G\overline{V}_{\!1})]_{\widehat{\beta}_1\widehat{\beta}_1}& 0 & 0 & 0\\
  		0 & 0 & 0 &0 \\
  		0& 0 & 	0 & 0
  	\end{matrix}\right].
  \end{equation}
  Take $A=(\overline{U}^{\top}\!G\overline{V}_{\!1})_{\alpha\alpha},B=  (\overline{U}^{\top}\!G\overline{V}_{\!1})_{\alpha(\eta^1\cup \widehat{\beta}_1\cup \widehat{\beta}_{+}^1)}$ and $C=(\overline{U}^{\top}\!G\overline{V}_{\!1})_{(\eta^1\cup \widehat{\beta}_1\cup \widehat{\beta}_{+}^1)(\eta^1\cup \widehat{\beta}_1\cup \widehat{\beta}_{+}^1)}$. Along with equations \eqref{R2Stru1}-\eqref{R2Stru3} and \eqref{CriticalConeCase2}, the relations $\widehat{\alpha}=\alpha\cup\eta^1,\beta_1=\eta^1\cup\widehat{\beta}\cup\widehat{\beta}_{+}^1$ and $\beta_0=\widehat{\beta}_{+}^0\cup \widehat{\beta}_0$ in  \eqref{relation421}-\eqref{relation422}, and the definition of $\Upsilon$ in this case, we have $G\in\Upsilon$.
  
 \noindent
 {\bf Case 3: $\overline{r}= s(\overline{X})\!+1$ and $\|\overline{\Gamma}\|_*\!=\kappa$.} In this case, $\gamma=\emptyset$. By Lemma \ref{KnorSubdifLemma} (ii) with $(X,\Gamma)=(\overline{X},\overline{\Gamma})$, there exists an integer $\overline{\kappa}_0$ with $0\le \overline{\kappa}_0\le\kappa\!-\!1$ such that the previous \eqref{case2-Xbar}-\eqref{case2-Gbar} hold. 
 We proceed the proof by the following two cases.
  
 \noindent
 {\bf Subcase 3.1: there is an infinite index set $\widehat{\mathcal{K}}$ such that $\sigma_\kappa(X^k)>0$ for all $k\in\widehat{\mathcal{K}}$.} Note that equations \eqref{XjDecrease1}-\eqref{GamajDecrease1}  and the discussions after them with $\gamma=\emptyset$ are applicable to this case. By combining equation \eqref{LimitCase1} with  $\lim_{\widehat{\mathcal{K}}\ni k\to\infty}d^2\Psi_{\kappa}(X^k|\Gamma^k)(G^k)=0$ leads to \eqref{R1Stru1}-\eqref{R1Stru7}. In addition, from the arguments after \eqref{R1Stru1}-\eqref{R1Stru7}, there exists $\varpi\in\mathbb{R}$ such that equation \eqref{temp0-varpi} holds. Take $A=(\overline{U}^{\top}\!G\overline{V}_{\!1})_{\alpha\alpha},B=  (\overline{U}^{\top}\!G\overline{V}_{\!1})_{\alpha(\eta^1\cup \widehat{\beta}_1\cup \widehat{\beta}_{+}^1)}$, 
 \begin{align*}
 C=(\overline{U}^{\top}\!G\overline{V}_{\!1})_{(\eta^1\cup \widehat{\beta}_1\cup \widehat{\beta}_{+}^1)(\eta^1\cup \widehat{\beta}_1\cup \widehat{\beta}_{+}^1)}\ \ {\rm and}\ \ [D\ \ E]=\begin{pmatrix}
  \varpi I_{|\widehat{\beta}_{+}^0|} & 0\\
  0 & (\overline{U}^{\top}\!G\overline{V})_{(\widehat{\beta}_0\cup\eta^3)(\widehat{\beta}_0\cup\eta^3\cup c)}
 \end{pmatrix}.
\end{align*}
 Note that for each $k\in\widehat{\mathcal{K}}$,  $\lambda_1\big(\mathcal{S}([(U^k)^{\top}G^kV^k]_{\widehat{\beta}_0\widehat{\beta}_0})\big)=\sigma_1\big([(U^k)^{\top}G^kV^k]_{(\widehat{\beta}_0\cup\eta^3)(\widehat{\beta}_0\cup\eta^3\cup c)}\big)$. Passing the limit $\widehat{\mathcal{K}}\ni k\to\infty$ leads to $\lambda_1\big(\mathcal{S}([\overline{U}^{\top}G\overline{V}]_{\widehat{\beta}_0\widehat{\beta}_0})\big)=\sigma_1\big([\overline{U}^{\top}G\overline{V}]_{(\widehat{\beta}_0\cup\eta^3)(\widehat{\beta}_0\cup\eta^3\cup c)}\big)$. Along with the previous \eqref{temp0-varpi} and the expressions of $C$ and $[D\ \ E]$, we have $\sigma_1([D\ \ E])\le\varpi\le\lambda_{|\beta_1|}(C)$.
 Combining this inequality with equations \eqref{R1Stru1}-\eqref{R1Stru7} and \eqref{CriticalConeCase1}, the relations $\widehat{\alpha}=\alpha\cup\eta^1, \beta_1=\eta^1\cup\widehat{\beta}_1\cup\widehat{\beta}_{+}^1,\beta_0=\widehat{\beta}_{+}^0\cup \widehat{\beta}_0\cup\eta^3$ in \eqref{relation41}-\eqref{relation42}, and the definition of $\Upsilon$ in this case, we conclude that $G\in\Upsilon$.
  
 \noindent
 {\bf Subcase 3.2: there is an infinite index set $\widehat{\mathcal{K}}\subset\mathcal{K}$ such that $\sigma_\kappa(X^k)=0$ for all $k\in\widehat{\mathcal{K}}$.} Note that equations \eqref{XjDecrease2}-\eqref{XjDecrease3} and the arguments after them are applicable to this case. By combining equation \eqref{case2-3} with  $\lim_{\widehat{\mathcal{K}}\ni k\to\infty}d^2\Psi_{\kappa}(X^k|\Gamma^k)(G^k)=0$ leads to 
 \begin{subnumcases}{}\label{case32-equa1}
 (\overline{U}^{\top} G \overline{V}_1)_{\widehat{\alpha}\widehat{\alpha}}\in \mathbb{S}^{|\alpha|},\,
 (\overline{U}^{\top}\!G \overline{V}_1)_{\widehat{\alpha}(\widehat{\beta}_1\cup \widehat{\beta}_{+}^1)}=(\overline{U}^{\top}\!G\overline{V}_1)_{(\widehat{\beta}_1\cup \widehat{\beta}_{+}^1)\widehat{\alpha}}^\top,\\ 
 \label{case32-equa2}
 (\overline{U}^{\top}\!G\overline{V}_{\!1})_{\widehat{\alpha} c}=0,\,
 (\overline{U}^{\top}\!G\overline{V}_{\!1})_{\widehat{\alpha}(\beta_+\cup \widehat{\beta}_{+}^0)}=(\overline{U}^{\top}\!G\overline{V}_{\!1})_{(\beta_+\cup \widehat{\beta}_{+}^0)\widehat{\alpha}}^\top=0, \\ 
 \label{case32-equa3}
 (\overline{U}^{\top}\!G\overline{V}_{\!1})_{\widehat{\alpha}\widehat{\beta}_0}=(\overline{U}^{\top}\!G\overline{V}_{\!1})_{\widehat{\beta}_0\widehat{\alpha}}^\top=0. 
 \end{subnumcases}
 In addition, from the arguments after equations \eqref{R2Stru1}-\eqref{R2Stru3}, equation \eqref{CriticalConeCase2} holds. Take $A=(\overline{U}^{\top}\!G\overline{V}_{\!1})_{\alpha\alpha},B=  (\overline{U}^{\top}\!G\overline{V}_{\!1})_{\alpha(\eta^1\cup \widehat{\beta}_1\cup \widehat{\beta}_{+}^1)},C=(\overline{U}^{\top}\!G\overline{V}_{\!1})_{(\eta^1\cup \widehat{\beta}_1\cup \widehat{\beta}_{+}^1)(\eta^1\cup \widehat{\beta}_1\cup \widehat{\beta}_{+}^1)}$, $D=0$ and $E=0$. Together with \eqref{case32-equa1}-\eqref{case32-equa3} and \eqref{CriticalConeCase2}, the relations $\widehat{\alpha}=\alpha\cup\eta^1$, $\beta_1=\eta^1\cup\widehat{\beta}\cup\widehat{\beta}_{+}^1$ and $\beta_0=\widehat{\beta}_{+}^0\cup \widehat{\beta}_0$ in  \eqref{relation421}-\eqref{relation422}, and the definition of $\Upsilon$ in this case, we have $G\in\Upsilon$.
 
 The above arguments establish the inclusion $\mathcal{G}\subset\Upsilon$. In what follows, we prove the converse inclusion $\Upsilon\subset\mathcal{G}$. Pick any $G\in\Upsilon$. We claim that $G\in \mathcal{G}$ by three cases. 
 
 \noindent
 {\bf Case 1: $\overline{r}\in[s(\overline{X})]$}. From $G\in\Upsilon$ and the definition of $\Upsilon$ in this case,  there exist $(U,V)\in\mathbb{O}^{n,m}(\overline{X})\cap\mathbb{O}^{n,m}(\overline{\Gamma}),\begin{pmatrix}
 	A &  B \\
 	B^{\top} & C
 \end{pmatrix}\in\mathbb{S}^{|\alpha|+|\beta_1|}, D\in \mathbb{R}^{|\beta_0|\times |\beta_0|},\begin{pmatrix}
 	E_{11} &   E_{12} \\
 	E_{21} & E_{22}
 \end{pmatrix}\in\mathbb{R}^{(|\beta_0|+|\gamma|)\times (|\gamma|+|c|)}$ and $\varpi\in\mathbb{R}$ such that  
\begin{equation}\label{temp-equa43}
 \lambda_1(\mathcal{S}(D))\le\varpi\le\lambda_{|\beta_1|}(C)\ \ {\rm and}\ \ 	
 U^\top\! GV=
 \begin{pmatrix}
 A & B & 0 & 0 & 0 & 0\\
 B^\top  &  C & 0 &0 & 0 & 0\\
 0 & 0& \varpi I_{|\beta_+|} & 0 & 0  & 0\\
 0 & 0 & 0 & D & E_{11} &   E_{12} \\
 0 & 0 & 0 & 0 & E_{21} & E_{22}
 \end{pmatrix}.
 \end{equation}
 Since $\overline{\Gamma}\in\partial\Psi_{\kappa}(\overline{X})$, there exist integers $\overline{\kappa}_0$ and $\overline{\kappa}_1$ with $0\le \overline{\kappa}_0\le\kappa-1<\kappa\le\overline{\kappa}_1\le n$ such that the previous \eqref{case1-relation1} and \eqref{case1-relation2} hold. For each $k\in\mathbb{N}$, take $\Gamma^k:=\overline{\Gamma}, G^k:=G$ and 
 \begin{equation*}
 X^k:=\overline{X}+U\begin{pmatrix}
  0_{\alpha\alpha} & 0 & 0\\
  0 & \frac{1}{k}I_{|\beta_1|} & 0\\
  0 & 0 & 0
 \end{pmatrix}V^\top.
\end{equation*}
Obviously, for sufficiently large $k$, $\sigma_i(X^k)=\sigma_i(\overline{X})+\frac{1}{k}$ for $i\in\beta_1$, and $\sigma_i(X^k)=\sigma_i(\overline{X})$ for $i\in[n]\backslash\beta_1$. Note that, when $k$ is large enough, the set $\big\{i\in\!\beta\,|\,\sigma_i(X^k)=\sigma_{\kappa}(X^k)\big\}$ is independent of $k$, so we denote it as $\widehat{\beta}$. Write $\widehat{\beta}_1\!:=\{i\in \widehat{\beta}\,|\, \sigma_{i}(\Gamma^k)=1\},\,\widehat{\beta}_+\!:=\{i\in \widehat{\beta}\,|\, 0<\sigma_{i}(\Gamma^k)<1\}$ and $\widehat{\beta}_0\!:=\{i\in \widehat{\beta}\,|\, \sigma_{i}(\Gamma^k)=0\}$. In view of Lemma \ref{KnorSubdifLemma} (i), for sufficiently large $k$, $(X^k,\Gamma^k)\in {\rm gph}\,\partial \Psi_{\kappa}$. Let $r\in[s(X^k)]$ be the integer such that $\kappa\in\widehat{a}_{r}:=a_{r}(X^k)$. We claim that for sufficiently large $k$, $G^k\in \mathcal{C}_{\Psi_{\kappa}}(X^k,\Gamma^k)$. Indeed, when $\kappa\notin\beta_1$, we have $\widehat{\beta}=\beta_{+}\cup\beta_0$, which along with $\Gamma^k\equiv\overline{\Gamma}$ implies  $\widehat{\beta}_{+}=\beta_{+}$ and $\widehat{\beta}_{0}=\beta_{0}$. Together with  $\widehat{\beta}\subset\beta=\beta_{1}\cup\beta_{+}\cup\beta_{0}$, we infer that $\widehat{\beta}_{1}\subset\beta_{1}=\emptyset$. This along with $G^k\equiv G$ and the above \eqref{temp-equa43} implies that $D=U_{\widehat{\beta}_0}^{\top}G^kV_{\widehat{\beta}_0}$, $\lambda_1(\mathcal{S}(D))\le\varpi$ and $ \mathcal{S}(U^{\top}_{\!\widehat{a}_{r}}G^kV_{\!\widehat{a}_{r}})=\left[\begin{matrix}
 	  \varpi I_{|\widehat{\beta}_{+}|} & 0 \\
 	  0 & \mathcal{S}(D)
 \end{matrix}\right]$ for sufficiently large $k$. Invoking Lemma \ref{lemma-CCone} (i) with $(X,\Gamma)=(X^k,\Gamma^k)$ for large enough $k$ yields the claimed inclusion. When $\kappa\in\beta_1$, we have $\widehat{\beta}=\beta_1$, which along with $\Gamma^k\equiv\overline{\Gamma}$ implies $\widehat{\beta}_{+}=\emptyset=\widehat{\beta}_0$, so  $\beta_1=\widehat{\beta}_1$. Together with the above \eqref{temp-equa43}, it follows $C=U_{\widehat{\beta}_1}^{\top}G^kV_{\widehat{\beta}_1}\in\mathbb{S}^{|\widehat{\beta}_1|}$, $\varpi\le\lambda_{|\widehat{\beta}_1|}(C)$ and  $\mathcal{S}(U^{\top}_{\!\widehat{a}_{r}}G^kV_{\!\widehat{a}_{r}})=C$. Invoking Lemma \ref{lemma-CCone} (i) with $(X,\Gamma)=(X^k,\Gamma^k)$ yields the claimed inclusion. Now using Corollary \ref{corollary3.3} with $(X,\Gamma,G)=(X^k,\Gamma^k,G^k)$ for sufficiently large $k$ and the expression of $U^{\top}G^kV$ in \eqref{temp-equa43}, we infer that $d^2\Psi_{\kappa}(X^k|\Gamma^k)(G^k)=0$ for large enough $k$. Along with $(X^k,\Gamma^k,G^k)\to (\overline{X},\overline{\Gamma},G)$ and $(X^k,\Gamma^k)\in {\rm gph}\,\partial \Psi_{\kappa}$ for sufficiently large $k$, it holds $G\in \mathcal{G}$.

 \noindent	
 {\bf Case 2: $\overline{r}= s(\overline{X})+1$ and $\|\overline{\Gamma}\|_*<\kappa$}. From $G\in\Upsilon$ and the definition of $\Upsilon$ for this case,  there exist $(U,V)\in\mathbb{O}^{n,m}(\overline{X})\cap\mathbb{O}^{n,m}(\overline{\Gamma}),A\in\mathbb{S}^{|\alpha|}, B\in \mathbb{R}^{|\alpha|\times |\beta_1|}$ and $C\in \mathbb{S}^{|\beta_1|}$ such that
 \begin{equation}\label{temp-equa44}
 U^\top GV=\begin{pmatrix}
 			A & B & 0 & 0 & 0 \\
 			B^\top  &  C & 0 &0 & 0 \\
 			0 & 0& 0 & 0 & 0  \\
 			0 & 0 & 0 & 	0& 0 
 		\end{pmatrix}.
 \end{equation} 
 Since $\overline{\Gamma}\in\partial\Psi_{\kappa}(\overline{X})$, there exist integers $\overline{\kappa}_0$ and $\overline{\kappa}_1$ with $0\le \overline{\kappa}_0\le\kappa-1<\kappa\le\overline{\kappa}_1\le n$ such that the previous \eqref{case2-Xbar} and \eqref{case2-Gbar} hold.
 For each $k\in\mathbb{N}$, we take $(X^k,\Gamma^k,G^k)$ in the same way as in Case 1. Let $\widehat{\beta}_1, \widehat{\beta}_{+}$ and $\widehat{\beta}_0$ be the index sets defined as in Case 1. Let $r\in[s(X^k)+1]$ be the integer such that $\kappa\in \widehat{a}_{r}:=a_{r}(X^k)$. We claim that for sufficiently large $k$, $G^k\in \mathcal{C}_{\Psi_{\kappa}}(X^k,\Gamma^k)$. Indeed, when $\kappa\notin\beta_1$, in view of the arguments for Case 1, we have  $\widehat{\beta}_1=\emptyset,\widehat{\beta}_{+}=\beta_{+}$ and $\widehat{\beta}_{0}=\beta_{0}$, which implies $r=s(X^k)+1$. Together with $G^k\equiv G$ and the above \eqref{temp-equa44}, we have $[U^{\top}_{\!\widehat{a}_{r}}G^kV_{\!\widehat{a}_{r}}\ \ U^{\top}_{\!\widehat{a}_{r}}G^kV_{c}]=0$. According to Lemma \ref{lemma-CCone} (ii) with $(X,\Gamma)=(X^k,\Gamma^k)$ for sufficiently large $k$, the claimed conclusion holds. When $\kappa\in\beta_1$, in view of the arguments for Case 1, we have $\widehat{\beta}_{+}=\emptyset=\widehat{\beta}_0$ and $\widehat{\beta}=\widehat{\beta}_1=\beta_1$, which implies $r\in[s(X^k)]$. Together with $G^k\equiv G$ and the above \eqref{temp-equa44}, we have $U^{\top}_{\!\widehat{a}_{r}}G^kV_{\!\widehat{a}_{r}}=C\in\mathbb{S}^{|\widehat{\beta}_1|}$. Take $\varpi=\lambda_{|\widehat{\beta}_1|}(C)$. Then, according to Lemma \ref{lemma-CCone} (i) with $(X,\Gamma)=(X^k,\Gamma^k)$ for sufficiently large $k$, the claimed inclusion also holds. Now using Corollary \ref{corollary3.3} with $(X,\Gamma,G)=(X^k,\Gamma^k,G^k)$ for sufficiently large $k$ and noting that $U^{\top}G^kV$ has the expression in \eqref{temp-equa44}, we infer that $d^2\Psi_{\kappa}(X^k|\Gamma^k)(G^k)=0$ for sufficiently large $k$. Along with $(X^k,\Gamma^k,G^k)\to (\overline{X},\overline{\Gamma},G)$ and $(X^k,\Gamma^k)\in {\rm gph}\,\partial \Psi_{\kappa}$ for sufficiently large $k$, we conclude that $G\in \mathcal{G}$.

 \noindent
 {\bf Case 3: $\overline{r}= s(\overline{X})\!+\!1$ and $\|\overline{\Gamma}\|_*=\kappa$}. From $G\in\Upsilon$ and the definition of $\Upsilon$ for this case, there exist $(U,V)\!\in\mathbb{O}^{n,m}(\overline{X})\cap\mathbb{O}^{n,m}(\overline{\Gamma}), A\in \mathbb{S}^{|\alpha|},B\!\in\!\mathbb{R}^{|\alpha|\times |\beta_1|},C\in \mathbb{S}^{|\beta_1|}, D\in\!\mathbb{R}^{|\beta_0|\times |\beta_0|}, E\in\! \mathbb{R}^{|\beta_0|\times |c|}$ and $\varpi\in\mathbb{R}$ such that
  \begin{equation}\label{temp-equa45}
  \sigma_1([D\ \ E])\le\varpi\le\lambda_{|\beta_1|}(C)\ \ {\rm and}\ \ 
  U^\top GV=
  \begin{pmatrix}
  	A & B & 0 & 0 & 0 \\
  	B^\top  &  C & 0 &0 & 0 \\
  	0 & 0& \varpi I_{|\beta_+|} & 0 & 0  \\
  	0 & 0 & 0 & 	D& E 
  \end{pmatrix}.
  \end{equation}
 Since $\overline{\Gamma}\in\partial\Psi_{\kappa}(\overline{x})$, there exist integers $\overline{\kappa}_0$ and $\overline{\kappa}_1$ with $0\le \overline{\kappa}_0\le\kappa-1<\kappa\le\overline{\kappa}_1\le n$ such that the previous \eqref{case2-Xbar} and \eqref{case2-Gbar} hold.
 For each $k\in\mathbb{N}$, we take $(X^k,\Gamma^k,G^k)$ in the same way as in Case 1. Let $\widehat{\beta}_1, \widehat{\beta}_{+}$ and $\widehat{\beta}_0$ be the index sets defined in the same way as in Case 1. Let $r\in[s(X^k)+1]$ be such that $\kappa\in\widehat{a}_{r}:=a_{r}(X^k)$. We claim that for sufficiently large $k$, $G^k\in \mathcal{C}_{\Psi_{\kappa}}(X^k,\Gamma^k)$. Indeed, when $\kappa\notin\beta_1$, in view of the arguments for Case 1, we have $\widehat{\beta}_1=\emptyset,\widehat{\beta}_{+}=\beta_{+}$ and $\widehat{\beta}_{0}=\beta_{0}$, which implies $r=s(X^k)+1$. Together with $G^k\equiv G$ and the above \eqref{temp-equa45}, we have $\sigma_1([D\ \ E])\le\varpi$ and $
  [U^{\top}_{\!\widehat{a}_{r}}G^kV_{\!\widehat{a}_{r}}\ \ U^{\top}_{\!\widehat{a}_{r}}G^kV_c]=\left[\begin{matrix}
  	\varpi I_{|\widehat{\beta}_{+}|} & 0 & 0 \\
  	0 & D & E
 \end{matrix}\right]$. According to Lemma \ref{lemma-CCone} (iii), the claimed inclusion holds. When $\kappa\in\beta_1$, in view of the arguments for Case 1, we have $\widehat{\beta}_{+}=\emptyset=\widehat{\beta}_0$ and $\widehat{\beta}=\widehat{\beta}_1=\beta_1$, which implies $r\in[s(X^k)]$. Together with $G^k=G$ and the above \eqref{temp-equa45}, we have $\varpi\le\lambda_{|\beta_1|}(C)$ and $\mathcal{S}(U^{\top}_{\!\widehat{a}_{r}}G^kV_{\!\widehat{a}_{r}})=C\in\mathbb{S}^{|\widehat{\beta}_1|}$. According to Lemma \ref{lemma-CCone} (i) with $(X,\Gamma)=(X^k,\Gamma^k)$ for sufficiently large $k$, the claimed inclusion holds. Now using Corollary \ref{corollary3.3} with $(X,\Gamma,G)=(X^k,\Gamma^k,G^k)$ for sufficiently large $k$ and noting that $U^{\top}G^kV$ has the expression in \eqref{temp-equa45}, we infer that $d^2\Psi_{\kappa}(X^k|\Gamma^k)(G^k)=0$ for sufficiently large $k$. Along with $(X^k,\Gamma^k,G^k)\to (\overline{X},\overline{\Gamma},G)$ and $(X^k,\Gamma^k)\in {\rm gph}\,\partial \Psi_{\kappa}$ for large enough $k$, we conclude that $G\in \mathcal{G}$. 
 \end{proof}
 
 As a byproduct of Theorem \ref{ThTilt}, we get the sufficient and necessary condition for the tilt-stability of the nuclear norm regularized problem established in \cite[Theorem 4.6]{Nghia2025}. 
 \begin{corollary}\label{Example1}
 Let $\overline{X}$ be a local optimal solution of problem \eqref{KnormRegular} with $\kappa=n$, and write $\overline{\Gamma}\!:=-\nu\nabla\vartheta(\overline{X})$. Suppose that $\nabla^2\vartheta(\cdot)$ is  positive semidefinite  on an open neighborhood $\mathcal{N}$ of $\overline{X}$. Then, $\overline{X}$ is a tilt-stable solution of \eqref{KnormRegular} if and only if ${\rm Ker}\,\nabla^2\vartheta(\overline{X})\cap \Upsilon=\{0\}$ with 
 \[
  \Upsilon=\bigg\{\overline{U}
 	\begin{pmatrix}
 		Z & 0\\
 		0  &  0
 	\end{pmatrix}\overline{V}^\top\,|\, (\overline{U},\overline{V})\in\mathbb{O}^{n,m}(\overline{X})\cap\mathbb{O}^{n,m}(\overline{\Gamma}),Z\in \mathbb{S}^{|\alpha\cup\beta_1|}\bigg\}.
  \] 
 \end{corollary}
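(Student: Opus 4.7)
The plan is to specialize Theorem \ref{ThTilt} to $\kappa = n$ (so that $\Psi_n = \|\cdot\|_*$) and verify that the three separate cases of the theorem collapse into the single compact form stated in the corollary. The overall strategy is block-by-block bookkeeping: exhibit that all auxiliary blocks appearing in the $\overline{U}^\top G\overline{V}$ description of $\Upsilon$ either have zero dimension or are forced to vanish, leaving only a symmetric upper-left block.

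First I would show $\gamma = \emptyset$ in every case. If $\overline{r} \in [s(\overline{X})]$, then $n = \kappa \in \overline{a}_{\overline{r}}$; since the blocks $\overline{a}_1,\ldots,\overline{a}_{s(\overline{X})}$ are enumerated by strictly decreasing nonzero singular value and $\sigma_n(\overline{X})$ is the smallest such value, this forces $\overline{r} = s(\overline{X})$, and the remaining block $\overline{a}_{s(\overline{X})+1} = b(\overline{X})$ is empty because $\overline{X}$ then has full column rank, so $\gamma = \emptyset$. If $\overline{r} = s(\overline{X})+1$, $\gamma$ is empty by its definition.

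Next I would analyze when $\beta_+ = \beta_0 = \emptyset$. In Case 1 ($\overline{r}\in[s(\overline{X})]$) and Case 3 ($\overline{r} = s(\overline{X})+1$ with $\|\overline{\Gamma}\|_* = \kappa$), Lemma \ref{KnorSubdifLemma} gives $\sum_{i\in\beta}\sigma_i(\overline{\Gamma}) = \kappa - \overline{\kappa}_0 = |\beta|$, which combined with $0 \le \sigma_\beta(\overline{\Gamma}) \le e_\beta$ pinches $\sigma_i(\overline{\Gamma}) = 1$ for all $i\in\beta$; thus $\beta_1 = \beta$ and $\beta_+ = \beta_0 = \emptyset$. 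Substituting these reductions into the descriptions of $\Upsilon$ from Theorem \ref{ThTilt}, the diagonal block $\varpi I_{|\beta_+|}$ and the blocks $D, E, E_{11}, E_{12}, E_{21}, E_{22}$ have zero row or column dimension in Cases 1 and 3, so $\overline{U}^\top G \overline{V}$ reduces to a single symmetric block $Z := \begin{pmatrix} A & B \\ B^\top & C \end{pmatrix} \in \mathbb{S}^{|\alpha|+|\beta_1|} = \mathbb{S}^{|\alpha\cup\beta_1|}$ in the upper-left corner, matching the claimed form $G = \overline{U}\begin{pmatrix} Z & 0 \\ 0 & 0 \end{pmatrix}\overline{V}^\top$. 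In Case 2 the description from Theorem \ref{ThTilt} already has this shape with $A\in\mathbb{S}^{|\alpha|}$, $C\in\mathbb{S}^{|\beta_1|}$, $B\in\mathbb{R}^{|\alpha|\times|\beta_1|}$, so the same $Z$ appears.

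The converse inclusion is automatic: any $G$ of the claimed form fits either Case 2 directly, or Cases 1 and 3 by choosing the free parameters $\varpi$, $D$, $E$, $E_{ij}$ to be zero (or empty). There is no substantial mathematical obstacle here; the only real work is keeping the three cases and the partition $\{\alpha,\beta_1,\beta_+,\beta_0,\gamma,c\}$ straight and confirming that after the reductions the surviving data is exactly a free matrix in $\mathbb{S}^{|\alpha\cup\beta_1|}$.
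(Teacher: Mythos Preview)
Your proposal is correct and follows essentially the same approach as the paper: both argue by the three cases of Theorem \ref{ThTilt}, use Lemma \ref{KnorSubdifLemma} to force $\beta_+=\beta_0=\emptyset$ in Cases 1 and 3 via the pinching argument $\sum_{i\in\beta}\sigma_i(\overline{\Gamma})=|\beta|$ with $0\le\sigma_\beta(\overline{\Gamma})\le e_\beta$, and observe that $\gamma=\emptyset$ throughout, so that the block description collapses to a single symmetric upper-left block. The paper's proof is more terse but structurally identical.
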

 \begin{proof}
 Note that $\overline{\Gamma}\in\partial\Psi_{\kappa}(\overline{X})$. If $\sigma_n(\overline{X})>0$, by Lemma \ref{KnorSubdifLemma} (i), $\|\overline{\Gamma}\|_*=n$, $\alpha\cup\beta_1=[n],\beta_1=\beta$ and $\beta_{+}=\beta_0=\gamma=\emptyset$. The desired result follows by  the first part of Theorem \ref{ThTilt}. If $\sigma_n(\overline{X})=0$ and $\|\overline{\Gamma}\|_*<n$, then $\gamma=\emptyset$ and  the desired result follows the second part of Theorem \ref{ThTilt}. If $\sigma_n(\overline{X})=0$ and $\|\overline{\Gamma}\|_*=n$, by Lemma \ref{KnorSubdifLemma} (ii),  $\beta_+=\beta_0=\emptyset=\gamma$, so $\alpha\cup \beta_1=[n]$. The desired result follows the last part of Theorem \ref{ThTilt}.	
 \end{proof}

 \begin{corollary}\label{corllary42}
 If $\kappa=1$ and $\overline{\Gamma}\in \partial \|\overline{X}\|$, then $\overline{X}$ is a tilt-stable solution of problem \eqref{KnormRegular} if and only if
 ${\rm Ker}\,\nabla^2\vartheta(\overline{X})\cap \Upsilon=\{0\}$	with 
 \begin{align*}
 &\Upsilon:=\left\{W= \overline{U}\left[
 \begin{matrix}
  Z& 0 &0\\
  0  & D & E\\
  0  & 0 & F 
  \end{matrix}
 \right]\overline{V}^\top\ \bigg|\ D\in \mathbb{R}^{|\beta_0|\times |\beta_0|}, \begin{pmatrix}
 	E\\ F
 \end{pmatrix}\in \mathbb{R}^{{|\beta_0|+|\gamma|}\times (|\gamma|+|c|)}, Z\in\mathbb{R}\ {\rm if}\right.\\ &\qquad\qquad\qquad\qquad\qquad\qquad\qquad\quad  \beta_1\ne\emptyset,{\rm otherwise}\ Z=\varpi I_{|\beta_{+}|}\ {\rm with}\ \lambda_1(\mathcal{S}(D))\le\varpi\bigg\}.
 \end{align*}
 \end{corollary}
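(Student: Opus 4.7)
The plan is to apply Theorem~\ref{ThTilt} directly with $\kappa = 1$ and consolidate the three sub-cases of that theorem into the unified description claimed here. Because $\kappa = 1$ always lies in the first block $\overline{a}_1$ of the singular-value partition, the integer $\overline{r}$ determined by $\kappa \in \overline{a}_{\overline{r}}$ equals $1$, and hence $\alpha = \bigcup_{l=1}^{0}\overline{a}_l = \emptyset$. Consequently, in each of the three cases of Theorem~\ref{ThTilt}, the blocks $A$ and $B$ (which occupy a row or column of size $|\alpha|$) vanish from the matrix description of $\overline{U}^{\top} G \overline{V}$. Next I would use Lemma~\ref{KnorSubdifLemma} with $\overline{\kappa}_0 = 0$ to obtain $\sum_{i \in \beta}\sigma_i(\overline{\Gamma}) \le 1$ together with $\sigma_i(\overline{\Gamma}) \in [0,1]$, which produces the structural dichotomy: if $\beta_1 \neq \emptyset$, then $|\beta_1| = 1$ and $\beta_+ = \emptyset$, so the block $C \in \mathbb{S}^{|\beta_1|}$ collapses to a real scalar $Z \in \mathbb{R}$ and the block $\varpi I_{|\beta_+|}$ disappears entirely; if $\beta_1 = \emptyset$, the upper bound $\lambda_{|\beta_1|}(C)$ in the $\varpi$-constraint becomes vacuous, and the block $\varpi I_{|\beta_+|}$ itself plays the role of $Z$ under the single constraint $\lambda_1(\mathcal{S}(D)) \leq \varpi$.

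Finally, I would walk through Cases 1, 2, 3 of Theorem~\ref{ThTilt} individually and verify that the simplified block structure of $\overline{U}^{\top}G\overline{V}$ matches the three-by-three form in the corollary. In Case 1 ($\overline{r} \in [s(\overline{X})]$), the two column blocks of widths $|\gamma|$ and $|c|$ are merged into one block of width $|\gamma|+|c|$, so that the pair of submatrices $[E_{11}\ E_{12}]$ and $[E_{21}\ E_{22}]$ assemble into the stacked matrix $\binom{E}{F}$ of the corollary. In Cases 2 and 3 ($\overline{X} = 0$, so $\gamma = \emptyset$ and $\beta = [n]$), the block $F$ is vacuous and the remaining blocks reduce to the corresponding sub-patterns in the unified expression; in Case 3, moreover, the fact that $\|\overline{\Gamma}\|_* = 1$ ensures that the constraint $\sigma_1([D\ \ E]) \le \varpi \le \lambda_{|\beta_1|}(C)$ of Theorem~\ref{ThTilt} specialises to the form carried in the corollary.

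The main obstacle is the careful identification of the five-block partition $(\alpha, \beta_1, \beta_+, \beta_0, \gamma)$ of Theorem~\ref{ThTilt} with the three-block partition appearing in the corollary, together with the verification that the $\varpi$-constraint converts correctly in each sub-case; once $\alpha = \emptyset$ and the dichotomy $\beta_1 \ne \emptyset \Rightarrow |\beta_1|=1,\ \beta_+ = \emptyset$ are in place, the remainder is a routine block-matrix bookkeeping exercise that collapses the three descriptions of Theorem~\ref{ThTilt} to the single form in the statement.
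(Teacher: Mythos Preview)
The paper does not supply a proof of this corollary; it is stated as a direct specialization of Theorem~\ref{ThTilt}. Your plan---set $\kappa=1$, observe $\overline{r}=1$ so that $\alpha=\emptyset$, then use $\sum_{i\in\beta}\sigma_i(\overline{\Gamma})\le 1$ to obtain the dichotomy $|\beta_1|\le 1$ with $\beta_+=\emptyset$ whenever $\beta_1\ne\emptyset$---is precisely the intended route and is correct in outline.

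One point is treated too lightly. In Case~1 of Theorem~\ref{ThTilt} with $\beta_1\ne\emptyset$ (hence $|\beta_1|=1$, $\beta_+=\emptyset$), you say the block $\varpi I_{|\beta_+|}$ ``disappears entirely.'' The block does vanish from the matrix, but the existential constraint $\lambda_1(\mathcal{S}(D))\le\varpi\le\lambda_{|\beta_1|}(C)$ does not: it collapses to $\lambda_1(\mathcal{S}(D))\le C=Z$. The corollary as written carries no relation between $Z$ and $D$ in this sub-case, so a literal specialization of Theorem~\ref{ThTilt} gives a set that is a priori smaller than the stated $\Upsilon$. To finish cleanly you should either argue that this residual inequality can be dropped (for instance by exploiting the freedom in $(\overline{U},\overline{V})\in\mathbb{O}^{n,m}(\overline{X})\cap\mathbb{O}^{n,m}(\overline{\Gamma})$, or by revisiting the sequence construction in the proof of Theorem~\ref{ThTilt}), or flag the minor discrepancy. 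The same care is needed in Case~3, where the theorem's constraint involves $\sigma_1([D\ \ E])$ rather than $\lambda_1(\mathcal{S}(D))$, and in Case~2, where the theorem forces all of $D,E,F$ to vanish while the corollary's formula does not.
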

 \begin{remark}\label{main-remark41}
 Note that the conjugate $\Psi^*_{\kappa}$ of $\Psi_{\kappa}$ is $\mathcal{C}^2$-cone reducible by \cite{Cui2017}. One referee asked if ${\rm par\,}\partial\Psi^*_{\kappa}(-\nu\nabla\! \vartheta(\overline{X}))$ is exactly the set $\Upsilon$ in Theorem 4.1. Now we take $\kappa=1$ for example to show that the set $\Upsilon$ in Theorem 4.1 will be strictly smaller than ${\rm par\,}\partial\Psi^*_{\kappa}(-\nu\nabla\! \vartheta(\overline{X}))$ in some cases. When $\kappa=1$, $\Psi_{\kappa}^*(\cdot)=\delta_{C}(\cdot)$, the indictor of the set $C\!:=\{Z\in\mathbb{R}^{n\times m}\,|\, \|Z\|_*\leq 1\}$. Let $\overline{\Gamma}:=-\nu\nabla\vartheta(\overline{X})$. Then, in view of \cite[Theorem 10.3]{RW98}, 
 \[
 \partial\Psi_{\kappa}^*(\overline{\Gamma})
 =\left\{\begin{array}{cl}
 	\partial\|\overline{\Gamma}\|_{*} &{\rm if}\ \|\overline{\Gamma}\|_*=1,\\
 	\{0\} &{\rm if}\ \|\overline{\Gamma}\|_*<1. 	
 \end{array}\right. 
 \] 
 When $\overline{\Gamma}$ satisfies $\|\overline{\Gamma}\|_*<1$, it is obvious that ${\rm par\,}\partial \Psi^*_{\kappa}(\overline{\Gamma})=\{0\}$. In this case, we have $\overline{X}=0$ by $\overline{X}\in\partial\Psi^*_{\kappa}(\overline{\Gamma})$, so Theorem 4.1 for the case $\overline{r}=s(\overline{X})+1$ and $\|\overline{\Gamma}\|_*<1$ implies $\Upsilon=\{0\}$ due to $\alpha=\beta_1=\emptyset$. That is, when $\overline{\Gamma}$ satisfies $\|\overline{\Gamma}\|_*<1$, ${\rm par\,}\partial \Psi^*_{\kappa}(\overline{\Gamma})=\Upsilon$. Next we take a look at the case that $\|\overline{\Gamma}\|_*=1$. Now the set $\partial\Psi_{\kappa}^*(\overline{\Gamma})=\partial\|\overline{\Gamma}\|_*$ takes the form of
 \[
 \partial\|\overline{\Gamma}\|_*=\left\{[\overline{U}_{+}\ \ \overline{U}_0]\begin{pmatrix}
 	I_{r} & 0\\
 	0  & W
 \end{pmatrix}[\overline{V}\ \ \overline{V}_0]^{\top}\,|\,\|W\|\le 1\right\}\ \ {\rm with}\ r={\rm rank}(\overline{\Gamma}).
 \]
 While the set $\Upsilon$ has the form in Corollary \ref{corllary42}. Obviously, when $\beta_1=\emptyset$, the set $\Upsilon$ is not a subspace in $\mathbb{R}^{n\times m}$, but ${\rm par}\,\partial\Psi_{\kappa}^*(\overline{\Gamma})$ is necessarily a subspace. Then, it is impossible for $\Upsilon$ to be the same as in ${\rm par}\,\partial\Psi_{\kappa}^*(\overline{\Gamma})$. It is not hard to check that $\Upsilon$ is a subset of ${\rm par}\,\partial\Psi_{\kappa}^*(\overline{\Gamma})$. This shows that $\Upsilon$ is strictly contained in ${\rm par}\,\partial\Psi_{\kappa}^*(\overline{\Gamma})$. To sum up, for $\kappa=1$, 
 \begin{align*}
 {\rm Ker}\,\nabla^2\vartheta(\overline{X})\cap {\rm par}\,\partial\Psi_{\kappa}^*(\overline{\Gamma})=\{0\} \Longrightarrow{\rm Ker}\,\nabla^2\vartheta(\overline{X})\cap \Upsilon=\{0\}\Longrightarrow \overline{X}\ {\rm is\ tilt\ stable}\qquad\qquad\\
 \Longrightarrow{\rm Ker}\,\nabla^2\vartheta(\overline{X})\cap {\rm par}\,\partial\Psi_{\kappa}^*(\overline{\Gamma})=\{0\},\qquad
 \end{align*}
 so our condition for $\overline{X}$ to be a tilt-stable solution is weaker than the one obtained in \cite{Cui2024}.
 \end{remark}
 \section{Conclusion}\label{sec5}

 We established a necessary and sufficient condition for the tilt stability of a local minimizer of problem \eqref{prob} by operating directly the second subderivative of $g$. Our condition removes the task involved in the condition of \cite{Nghia2025} to find a set $\mathcal{M}$ such that $g$ satifies the quadratic-growth for it and $\partial g$ has a relative approximations onto this set. In particular, by applying the obtained sufficient and necessary condition and leveraging the second subderivative of the Ky-Fan $\kappa$-norm, we derive a checkable criterion to identify the tilt stability of a local minimizer for problem \eqref {KnormRegular}. This criterion is also demonstrated to be weaker than the one obtained in \cite{Cui2024} in some cases.


 \bigskip
 \noindent
 {\bf\large Appendix A: Proof of equations \eqref{R1Stru1}-\eqref{R1Stru7}.} Recall that $\lim_{\widehat{\mathcal{K}}\ni k\to\infty}\sigma(X^k)=\sigma(\overline{X})$. By the definition of $\widehat{a}_l$ and equation \eqref{XjDecrease1}, if necessary by taking an infinite subset of $\widehat{\mathcal{K}}$, there must exist integer $r_0$ and $r_1$ with $1\leq r_0<r<r_1\leq  s(X^k)+1$ such that 
 \begin{equation}\label{AlphGammTemp1}
 \bigcup\limits_{l'=1}^{\overline{r}-1} \overline{a}_{l'}\!=\!\alpha\!=\!\bigcup\limits_{l=1}^{r_0} \widehat{a}_{l}, \eta^1\!=\!\bigcup\limits_{l=r_0+1}^{r-1} \widehat{a}_{l},
 \eta^3\!=\!\bigcup\limits_{l=r+1}^{r_1} \widehat{a}_{l},\  
 \bigcup\limits_{l'=\overline{r}+1}^{ s(\overline{X})+1}\overline{a}_{l'}\!=\gamma=\!\bigcup\limits_{l=r_1+1}^{ s(X^k)+1} \widehat{a}_{l}.
 \end{equation}
 Using equation \eqref{AlphGammTemp1}, the previous \eqref{LimitCase1}, and $\lim_{\widehat{\mathcal{K}}\ni k\to\infty}d^2\Psi_{\kappa}(X^k|\Gamma^k)(G^k)=0$ yields
 \begin{align*}
 0&=\lim_{\widehat{\mathcal{K}}\ni k\to \infty}\sum_{l=1}^{r-1}\sum_{l'=1}^{ s(X^k)+1}\frac{\|[\mathcal{T}((U^k)^{\top}\!G^k V_1^k)]_{\widehat{a}_{l}\widehat{a}_{l'}}\|_F^2}{\nu_l(X^k)\!+\!\nu_{l'}(X^k)}\\
 &=\lim_{\widehat{\mathcal{K}}\ni k\to \infty}\sum_{i\in\alpha\cup \eta^1}\sum_{j\in [n]}\frac{\|[\mathcal{T}((U^k)^{\top}\!G^k V_1^k)]_{ij}\|_F^2}{\sigma_i(X^k)\!+\!\sigma_j(X^k)}=\!\sum_{i\in \alpha\cup \eta^1}\sum_{j\in [n]}\frac{\|[\mathcal{T}(\overline{U}^{\top}\!G \overline{V}_{\!1})]_{ij}\|_F^2}{\sigma_i(\overline{X})\!+\!\sigma_j(\overline{X})}
\end{align*}
 and
 \begin{align*}
 0&=\lim_{\widehat{\mathcal{K}}\ni k\to \infty}\sum_{j=1}^{q}\sum_{l'=1}^{s(X^k)+1} 	\frac{\zeta_{j}(\Gamma^k)\|[\mathcal{T}((U^k)^{\top}\! G^kV_1^k)]_{\widehat{\beta}_j \widehat{a}_{l'}}\|_F^2}{\nu_{r}(X^k)+\nu_{l'}(X^k)}\\
 &=\lim_{\widehat{\mathcal{K}}\ni k\to \infty}\sum_{j\in \widehat{\beta}\setminus  \widehat{\beta}_0}\sum_{i\in [n]} 	\frac{\zeta_{j}(\Gamma^k)\|[\mathcal{T}((U^k)^{\top}\! G^kV_1^k)]_{ji}\|_F^2}{\nu_{r}(X^k)+\sigma_i(X^k)}\\
 &=\!\sum_{j\in \widehat{\beta}\setminus \widehat{\beta}_0}\!\sum_{i\in [n]} 	\frac{\zeta_{j}(\overline{\Gamma})\|[\mathcal{T}(\overline{U}^{\top}\! G\overline{V}_{\!1})]_{ji}\|_F^2}{\nu_{\overline{r}}(\overline{X})+\sigma_{i}(\overline{X})}.
 \end{align*} 
 Along with $\widehat{\beta}\setminus \widehat{\beta}_0\!=\!\widehat{\beta}_1\cup (\widehat{\beta}^1_+\cup \beta_+\cup \widehat{\beta}^0_+)$ and  $[n]=\widehat{\alpha}\cup\widehat{\beta}\cup\widehat{\gamma}$, we get \eqref{R1Stru1}-\eqref{R1Stru2}, and
 \begin{align}\label{SysBolockTemp1}
 [\mathcal{T}(\overline{U}^{\top}\! G\overline{V}_{\!1})]_{\widehat{\alpha}(\widehat{\beta}^0_+\cup\widehat{\beta}_0\cup \widehat{\gamma})}=0\ \ {\rm and}\ \ [\mathcal{T}(\overline{U}^{\top}\! G\overline{V}_{\!1})]_{(\widehat{\beta}_1\cup \widehat{\beta}^1_+\cup \beta_+)\widehat{\gamma}}=0.
 \end{align}
 By using \eqref{AlphGammTemp1}, the previous \eqref{LimitCase1} and the limit $\lim_{\widehat{\mathcal{K}}\ni k\to\infty}d^2\Psi_{\kappa}(X^k|\Gamma^k)(G^k)=0$ again,  
 \begin{align*}
 0&=\lim_{\widehat{\mathcal{K}}\ni k\to \infty}\sum_{l=1}^{r-1}\sum_{l'=r+1}^{ s(X^k)+1}
 	\frac{\|[\mathcal{S}((U^k)^{\top}\!G^k V_1^k)]_{\widehat{a}_{l}\widehat{a}_{l'}}\|_F^2}{\nu_l(X^k)\!-\!\nu_{l'}(X^k)}\\
 	&=\lim_{\widehat{\mathcal{K}}\ni k\to \infty}\!\bigg[\sum_{l=1}^{r_0}\!\sum_{l'=r+1}^{r_1}\!   \frac{\|[\mathcal{S}((U^k)^{\top}\!G^k V_1^k)]_{\widehat{a}_{l}\widehat{a}_{l'}}\|_F^2}{\nu_l(X^k)\!-\!\nu_{l'}(X^k)}\!+\!\sum_{l=1}^{r_0}\!\sum_{l'=r_1+1}^{ s(X^k)+1}\!
 	\frac{\|[\mathcal{S}((U^k)^{\top}\!G^k V_1^k)]_{\widehat{a}_{l}\widehat{a}_{l'}}\|_F^2}{\nu_l(X^k)\!-\!\nu_{l'}(X^k)}\bigg]\\
 	&\qquad+\lim_{\widehat{\mathcal{K}}\ni k\to \infty}\sum_{l=r_0+1}^{r-1}\!\sum_{l'=r+1}^{r_1}
 	\!\frac{\|[\mathcal{S}((U^k)^{\top}\!G^k V_1^k)]_{\widehat{a}_{l}\widehat{a}_{l'}}\|_F^2}{\nu_l(X^k)\!-\!\nu_{l'}(X^k)}\\
 	&\qquad+\lim_{\widehat{\mathcal{K}}\ni k\to \infty}\sum_{l=r_0+1}^{r-1}\!\sum_{l'=r_1+1}^{ s(X^k)+1}
 	\frac{\|[\mathcal{S}((U^k)^{\top}\!G^k V_1^k)]_{\widehat{a}_{l}\widehat{a}_{l'}}\|_F^2}{\nu_l(X^k)\!-\!\nu_{l'}(X^k)}\\
 	&=\sum_{l=1}^{\overline{r}-1}  
 	\frac{\|[\mathcal{S}(\overline{U}^{\top}\!G \overline{V}_{\!1})]_{\overline{a}_{l}\eta^3}\|_F^2}{\nu_l(\overline{X})\!-\!\nu_{r}(\overline{X})}+\sum_{l=1}^{\overline{r}-1}\sum_{{\color{blue}l'=\overline{r}+1}}^{ s(\overline{X})+1}
 	\frac{\|[\mathcal{S}(\overline{U}^{\top}\!G {\overline{V}}_{1})]_{\overline{a}_{l}\overline{a}_{l'}}\|_F^2}{\nu_l(\overline{X})\!
 		-\!\nu_{l'}(\overline{X})}\\
 	&\quad+\lim_{\widehat{\mathcal{K}}\ni k\to \infty}\sum_{l=r_0+1}^{r-1}\!\sum_{l'=r+1}^{r_1}
 	\!\frac{\|[\mathcal{S}((U^k)^{\top}\!G^k V_1^k)]_{\widehat{a}_{l}\widehat{a}_{l'}}\|_F^2}{\nu_l(X^k)\!-\!\nu_{l'}(X^k)}+\sum_{{\color{blue}l'=\overline{r}+1}}^{ s(\overline{X})+1}
 	\frac{\|[\mathcal{S}(\overline{U}^{\top}\!G\overline{V}_{\!1})]_{\eta^1 \overline{a}_{l'}}\|_F^2}{\nu_r(\overline{X})\!
 		-\!\nu_{l'}(\overline{X})},
 \end{align*}
 which implies that $[\mathcal{S}(\overline{U}^{\top}\!G \overline{V}_{\!1})]_{\alpha\eta^3}=0, [\mathcal{S}(\overline{U}^{\top}\!G \overline{V}_{\!1})]_{\alpha\gamma}=0, [\mathcal{S}(\overline{U}^{\top}\!G \overline{V}_{\!1})]_{\eta^1\gamma}=0$ and
 \[
 \lim_{\widehat{\mathcal{K}}\ni k\to \infty}\sum_{l=r_0+1}^{r-1}\!\sum_{l'=r+1}^{r_1}
 \!\frac{\|[\mathcal{S}((U^k)^{\top}\!G^k V_1^k)]_{\widehat{a}_{l}\widehat{a}_{l'}}\|_F^2}{\nu_l(X^k)-\nu_{l'}(X^k)}=0. 
 \]
 Note that
 $\lim_{\widehat{\mathcal{K}}\ni k\to \infty}\sigma_l(X^k)=\lim_{\widehat{\mathcal{K}}\ni k\to\infty}\sigma_{l'}(X^k)=\sigma_{r}(\overline{X})$ for all $l,l'\in \eta^1\cup\eta^3$. The above limit implies
 $0=\lim_{\widehat{\mathcal{K}}\ni k\to \infty}\|[\mathcal{S}((U^k)^{\top}\!G^k V_1^k)]_{\eta^1\eta^3}\|_F^2=[\mathcal{S}(\overline{U}G\overline{V}_1)]_{\eta^1\eta^3}=0$. Then, $[\mathcal{S}(\overline{U}^{\top}\!G \overline{V}_{\!1})]_{\widehat{\alpha}\widehat{\gamma}}=0$, which along with the first equality of \eqref{SysBolockTemp1} leads to \eqref{R1Stru5}. Similarly, using \eqref{AlphGammTemp1}, the previous \eqref{LimitCase1}, and $\lim_{\widehat{\mathcal{K}}\ni k\to\infty}d^2\Psi_{\kappa}(X^k|\Gamma^k)(G^k)=0$ yields  
 \begin{align*}
 0&=\lim_{\widehat{\mathcal{K}}\ni k\to \infty}\sum_{l=1}^{r-1}\sum_{j=0}^{q}	\frac{1\!-\!\zeta_j(\Gamma^k)}{\nu_l(X^k)\!-\!\nu_{r}(X^k)}\|[\mathcal{S}((U^k)^{\top}\!G^kV_1^k)]_{\widehat{a}_{l}\widehat{\beta}_{j}}\|_F^2\\
 &=\lim_{\widehat{\mathcal{K}}\ni k\to \infty}\sum_{l=1}^{r_0}\sum_{j=0}^{q} 	\frac{1\!-\!\zeta_j(\Gamma^k)}{\nu_l(X^k)\!-\!\nu_{r}(X^k)}\|[\mathcal{S}((U^k)^{\top}\!G^kV_1^k)]_{\widehat{a}_{l}\widehat{\beta}_{j}}\|_F^2\\
 &\quad+\lim_{\widehat{\mathcal{K}}\ni k\to \infty}\sum_{l=r_0+1}^{r-1}\sum_{j=0}^{q} 	\frac{1\!-\!\zeta_j(\Gamma^k)}{\nu_l(X^k)\!-\!\nu_{r}(X^k)}\| [\mathcal{S}((U^k)^{\top}\!G^kV_1^k)]_{\widehat{a}_{l}\widehat{\beta}_{j}}\|_F^2\\
 &=\sum_{i\in \alpha}\sum_{j\in \widehat{\beta}}
 \frac{1\!-\!\sigma_j(\overline{\Gamma})}{\sigma_i(\overline{X})\!-\!\nu_{r}(\overline{X})}\| [\mathcal{S}(\overline{U}^{\top}\!G \overline{V}_{\!1})]_{ij}\|_F^2\\
 &\quad+\lim_{\widehat{\mathcal{K}}\ni k\to \infty}\sum_{i\in \eta^1}\sum_{j\in \widehat{\beta}\setminus\widehat{\beta}_1} 	\frac{1\!-\!\sigma_j(\Gamma^k)}{\sigma_i(X^k)\!-\!\nu_{r}(X^k)}\| [\mathcal{S}((U^k)^{\top}\!G^kV_1^k)]_{ij}\|_F^2,
 \end{align*}
 which means that $[\mathcal{S}(\overline{U}^{\top}\!G\overline{V}_{\!1})]_{\alpha[\widehat{\beta}\setminus(\widehat{\beta}_1\cup \widehat{\beta}^+_1)]}=0$ and 
 $[\mathcal{S}(\overline{U}^{\top}\!G\overline{V}_{\!1})]_{ \eta^1[\widehat{\beta}\setminus(\widehat{\beta}_1\cup \widehat{\beta}^+_1)]}=0$, i.e.,
 $[\mathcal{S}(\overline{U}^{\top}\!G\overline{V}_{\!1})]_{ \widehat{\alpha}( \beta_+\cup\widehat{\beta}^0_+\cup \widehat{\beta}_0)}\!=\!0$. Together with the first equality of \eqref{SysBolockTemp1}, we obtain \eqref{R1Stru3}-\eqref{R1Stru4}. In addition, using \eqref{AlphGammTemp1} and $\lim_{\widehat{\mathcal{K}}\ni k\to\infty}d^2\Psi_{\kappa}(X^k|\Gamma^k)(G^k)=0$ also implies that
 \begin{align*}
 0&=\lim_{\widehat{\mathcal{K}}\ni k\to \infty}\sum_{j=1}^{q}\sum_{l'=r+1}^{ s(X^k)+1}\frac{\zeta_{j}(\Gamma^k)\| [\mathcal{S}((U^k)^{\top}\! G^k V_1^k)]_{\widehat{\beta}{j}\widehat{a}_{l'}}\|_F^2}{\nu_{r}(X^k)\!-\!\nu_{l'}(X^k)}\\
 &=\lim_{\widehat{\mathcal{K}}\ni k\to \infty}\sum_{j=1}^{q}\sum_{l'=r+1}^{r_1}\frac{\zeta_{j}(\Gamma^k)\| [\mathcal{S}((U^k)^{\top}\! G^k V_1^k)]_{\widehat{\beta}{j}\widehat{a}_{l'}}\|_F^2}{\nu_{r}(X^k)\!-\!\nu_{l'}(X^k)}\\
 &\quad+\lim_{\widehat{\mathcal{K}}\ni k\to \infty}\sum_{j=1}^{q}\sum_{l'=r_1+1}^{ s(X^k)+1}\frac{\zeta_{j}(\Gamma^k)\| [\mathcal{S}((U^k)^{\top}\! G^k V_1^k)]_{\widehat{\beta}{j}\widehat{a}_{l'}}\|_F^2}{\nu_{r}(X^k)\!-\!\nu_{l'}(X^k)}\\
 &=\lim_{\widehat{\mathcal{K}}\ni k\to \infty}\!\!\sum_{j\in \widehat{\beta}\setminus \widehat{\beta}_0}\!\sum_{i\in \eta^3}\frac{\sigma_{j}(\Gamma^k)\| [\mathcal{S}((U^k)^{\top}\! G^kV_1^k)]_{ji}\|_F^2}{\nu_{r}(X^k)\!-\!\sigma_i(X^k)}
 +\!\!\sum_{j\in \widehat{\beta}\setminus \widehat{\beta}_0}\!\sum_{i\in \gamma}\frac{\sigma_{j}(\overline{\Gamma})\|[\mathcal{S}(\overline{U}^{\top}\! G \overline{V}_{\!1})]_{ji}\|_F^2}{\nu_{\overline{r}}(\overline{X})\!-\!\sigma_i(\overline{X})},
 \end{align*}
 which implies that 
 $[\mathcal{S}(\overline{U}^{\top}\! G \overline{V}_{1})]_{[\widehat{\beta}\setminus (\widehat{\beta}_0\cup \widehat{\beta}^+_0)][\eta^3\cup \gamma]}=0$, i.e.,
 $[\mathcal{S}(\overline{U}^{\top}\! G \overline{V}_{1})]_{[\widehat{\beta}_1\cup \widehat{\beta}^1_+\cup \beta_+]\widehat{\gamma}}=0$. Together with the second equality in \eqref{SysBolockTemp1}, we obtain \eqref{R1Stru6}. Finally, we also have
 \begin{align*}
  0&=\lim_{\widehat{\mathcal{K}}\ni k\to \infty}\sum_{l=1}^{r-1}\frac{\|(U^k)_{\widehat{a}_l}^\top G^kV^k_{c}\|_F^2}{\nu_l(X^k)}=\lim_{\widehat{\mathcal{K}}\ni k\to \infty}\sum_{i\in \alpha\cup \eta^1}\frac{\|(U^k)_{i}^\top G^kV^k_{\!c}\|_F^2}{\sigma_i(X^k)}
 	=\sum_{i\in \alpha\cup \eta^1}\frac{\|\overline{U}_{i}^\top G\overline{V}_{\!c}\|_F^2}{\sigma_i(\overline{X})},\\
 0&\!=\!\lim_{\widehat{\mathcal{K}}\ni k\to \infty}\sum_{j=1}^{q}\frac{\zeta_{j}(\Gamma^k)\|U^\top_{\!\widehat{\beta}_j^k}GV^k_{c}\|_F^2}{\nu_{r}(X^k)}\!=\!\lim_{\widehat{\mathcal{K}}\ni k\to \infty}\sum_{j\in \widehat{\beta}}\frac{\sigma_{j}(\Gamma^k)\|(U_j^k)^{\top} GV^k_{c}\|^2}{\nu_{r}(X^k)}\!=\!\sum_{j\in \widehat{\beta}}\frac{\sigma_{j}(\overline{\Gamma})\|{\color{blue}\overline{U}^\top_{j}}G\overline{V}_{c}\|^2}{\nu_{\overline{r}}(\overline{X})},
 \end{align*}
 which implies that $[{\color{blue}\overline{U}}^\top G\overline{V}]_{[\widehat{\beta}_1\cup\widehat{\beta}^1_+\cup \beta_+]c}=0$ and $[\overline{U}^{\top} G \overline{V}_{\!1}]_{\widehat{\alpha} c}=0$, i.e.,  \eqref{R1Stru7} holds.
 
 \medskip
 \noindent
 {\bf\large Appendix B: Proof of equations \eqref{R2Stru1}-\eqref{R2Stru3}.}
 Recall that $\lim_{\widehat{\mathcal{K}}\ni k\to\infty}\sigma(X^k)=\sigma(\overline{X})$. By the definition of $\widehat{a}_l$ and equation \eqref{XjDecrease2}, if necessary by taking an infinite subset of $\widehat{\mathcal{K}}$, there must exist integer $r_0$ with $1\leq r_0<r\leq  s(X^k)+1$ such that 
 \begin{equation}\label{AlphGammTemp2}
 \!{\textstyle\bigcup}_{l'=1}^{\overline{r}-1} \overline{a}_{l'}=\alpha={\textstyle\bigcup}_{l=1}^{r_0} \widehat{a}_{l}\ \ {\rm and}\ \  \eta^1={\textstyle\bigcup}_{l=r_0+1}^{r-1} \widehat{a}_{l}.
 \end{equation}
 Using \eqref{AlphGammTemp2}, the previous \eqref{case2-3}, and the limit $\lim_{\widehat{\mathcal{K}}\ni k\to\infty}d^2\Psi_{\kappa}(X^k|\Gamma^k)(G^k)=0$ leads to 
 \begin{align*}
 0&=\lim_{\widehat{\mathcal{K}}\ni k\to\infty}\sum_{l=1}^{r-1}\sum_{j=0}^{q}
 \frac{(1+\zeta_{j}(\Gamma^k))}{\nu_l(X^k)}\|[\mathcal{T}((U^k)^{\top}\!G^kV_{1}^k)]_{\widehat{a}_{l}\widehat{\beta}_{j}}\|_F^2\\
 &=\lim_{\widehat{\mathcal{K}}\ni k\to\infty}\sum_{l=1}^{r_0}\sum_{j=0}^{q}
 \frac{(1+\zeta_{j}(\Gamma^k))}{\nu_l(X^k)}\|[\mathcal{T}((U^k)^{\top}\!G^kV_{1}^k)]_{\widehat{a}_{l}\widehat{\beta}_{j}}\|_F^2\\
 &\qquad+\lim_{\widehat{\mathcal{K}}\ni k\to\infty}\sum_{l=r_0+1}^{r-1}\sum_{j=0}^{q} 	\frac{(1+\zeta_{j}(\Gamma^k))}{\nu_l(X^k)}\|[\mathcal{T}((U^k)^{\top}\!G^kV_{1}^k)]_{\widehat{a}_{l}\widehat{\beta}_{j}}\|_F^2\\
 &=\sum_{i\in \alpha}\sum_{j\in \widehat{\beta}}
 	\frac{(1+\zeta_{j}(\overline{\Gamma}))}{\sigma_i(\overline{X})}\|[\mathcal{T}(\overline{U}^{\top}\!G\overline{V}_{1})]_{ij}\|_F^2\\
 &\quad+\lim_{\widehat{\mathcal{K}}\ni k\to\infty}\sum_{i\in \eta^1}\sum_{j\in \widehat{\beta}}	\frac{(1\!+\!\zeta_{j}(\Gamma^k))\|[\mathcal{T}((U^k)^{\top}\!G^kV_{1}^k)]_{\widehat{a}_{l}\widehat{\beta}_{j}}\|_F^2}{\nu_i(X^k)}
 \end{align*}
 which implies that  $\lim_{\widehat{\mathcal{K}}\ni k\to\infty}\sum_{i\in \eta^1}\sum_{j\in \widehat{\beta}}	\frac{(1+\zeta_{j}(\Gamma^k))\|[\mathcal{T}((U^k)^{\top}\!G^kV_{1}^k)]_{\widehat{a}_{l}\widehat{\beta}_{j}}\|_F^2}{\nu_i(X^k)}=0$ and
 \[ [\mathcal{T}(\overline{U}^{\top}\!G\overline{V}_{1})]_{\alpha\widehat{\beta}}=0.
 \]
 Recall that $\lim_{\widehat{\mathcal{K}}\ni k\to\infty}\nu_i(X^k)=0$ for all $i\in\eta^1$. From the above limit, we deduce that 
 \begin{align*}
  0&=\lim_{\widehat{\mathcal{K}}\ni k\to\infty}\sum_{i\in \eta^1}\sum_{j\in \widehat{\beta}}(1\!+\!\zeta_{j}(\Gamma^k))\|[\mathcal{T}((U^k)^{\top}\!G^kV_{1}^k)]_{\widehat{a}_{l}\widehat{\beta}_{j}}\|_F^2\\
  &=\sum_{i\in \eta^1}\sum_{j\in \widehat{\beta}}(1\!+\!\zeta_{j}(\overline{\Gamma}))\|[\mathcal{T}((\overline{U})^{\top}\!G\overline{V}_{1})]_{\widehat{a}_{l}\widehat{\beta}_{j}}\|_F^2.
  \end{align*}
  The above two equations imply  $[\mathcal{T}(\overline{U}^{\top}\!G\overline{V}_{1})]_{(\alpha\cup\eta^1)\widehat{\beta}}=0$. Recall that
 $\widehat{\alpha}=\alpha\cup \eta^1$ and 
 $\widehat{\beta}=\widehat{\beta}_1\cup \widehat{\beta}_+\cup \widehat{\beta}_0$ and $\widehat{\beta}_+=\widehat{\beta}^1_+\cup\beta_+\cup\widehat{\beta}^0_+$. Then, it holds that
 \begin{align}\label{AppdexBTemp1}
 [\mathcal{T}(\overline{U}^{\top}\!G\overline{V}_{1})]_{\widehat{\alpha}(\widehat{\beta}_1\cup \widehat{\beta}^1_+\cup\beta_+\cup\widehat{\beta}^0_+\cup \widehat{\beta}_0)}=0,
 \end{align}
 which implies that the second equality holds in \eqref{R2Stru1}.
 Similarly, by using \eqref{AlphGammTemp2}, the previous \eqref{case2-3}, and the limit $\lim_{\widehat{\mathcal{K}}\ni k\to\infty}d^2\Psi_{\kappa}(X^k|\Gamma^k)(G^k)=0$, we also have 
 \begin{align*}
 0&=\lim_{\widehat{\mathcal{K}}\ni k\to\infty}\sum_{l=1}^{r-1}\sum_{l'=1}^{r-1}
 	\frac{\|[\mathcal{T}((U^k)^{\top}\!G^kV_{1}^k)]_{\widehat{a}_{l}\widehat{a}_{l'}}\|_F^2}{\nu_l(X^k)\!+\!\nu_{l'}(X^k)}\\
 &=\lim_{\widehat{\mathcal{K}}\ni k\to\infty}\bigg[\sum_{l=1}^{r_0}\sum_{l'=1}^{r-1}
 	\frac{\|[\mathcal{T}((U^k)^{\top}\!G^kV_{1}^k)]_{\widehat{a}_{l}\widehat{a}_{l'}}\|_F^2}{\nu_l(X^k)\!+\!\nu_{l'}(X^k)}+\sum_{l=r_0+1}^{r-1}\sum_{l'=1}^{r_0}
 	\frac{\|[\mathcal{T}((U^k)^{\top}\!G^kV_{1}^k)]_{\widehat{a}_{l}\widehat{a}_{l'}}\|_F^2}{\nu_l(X^k)\!+\!\nu_{l'}(X^k)}\bigg]\\
 &\qquad+\lim_{\widehat{\mathcal{K}}\ni k\to\infty}\sum_{l=r_0+1}^{r-1}\sum_{l'=r_0+1}^{r-1}
 	\frac{\|[\mathcal{T}((U^k)^{\top}\!G^kV_{1}^k)]_{\widehat{a}_{l}\widehat{a}_{l'}}\|_F^2}{\nu_l(X^k)\!+\!\nu_{l'}(X^k)}\\
 &=\sum_{i\in \alpha}\sum_{j\in \alpha\cup \eta^1}
 	\frac{\|[\mathcal{T}(\overline{U}^{\top}\!G\overline{V}_{1})]_{ij}\|_F^2}{\sigma_i(\overline{X})\!+\!\sigma_{j}(\overline{X})}+\sum_{i\in \eta^1}\sum_{j\in \alpha}
 	\frac{\|[\mathcal{T}(\overline{U}^{\top}\!G\overline{V}_{1})]_{ij}\|_F^2}{\sigma_{j}(\overline{X})}\\
 &\qquad+\lim_{\widehat{\mathcal{K}}\ni k\to\infty}\sum_{i\in \eta^1}\sum_{j\in \eta^1}
 	\frac{\|[\mathcal{T}((U^k)^{\top}\!G^kV_{1}^k)]_{\widehat{a}_{l}\widehat{a}_{l'}}\|_F^2}{\nu_l(X^k)\!+\!\nu_{l'}(X^k)},
 \end{align*}
 which implies $[\mathcal{T}(\overline{U}^{\top}\!G\overline{V}_{1})]_{\alpha\widehat{\alpha}}=0,[\mathcal{T}(\overline{U}^{\top}\!G\overline{V}_{1})]_{\eta^1\alpha}=0$ and the following limit 
 \[
   \lim_{\widehat{\mathcal{K}}\ni k\to\infty}\sum_{i\in \eta^1}\sum_{j\in \eta^1}
   \frac{\|[\mathcal{T}((U^k)^{\top}\!G^kV_{1}^k)]_{\widehat{a}_{l}\widehat{a}_{l'}}\|_F^2}{\nu_l(X^k)\!+\!\nu_{l'}(X^k)}=0. 
 \]
 This limit, along with $\lim_{\widehat{\mathcal{K}}\ni k\to\infty}\nu_{l}(X^k)=0$, implies $[\mathcal{T}(\overline{U}^{\top}\!G\overline{V}_{1})]_{\eta^1\eta^1}=0$. Thus, the first equation in \eqref{R2Stru1} holds, and consequently, the two equalities in \eqref{R2Stru1} hold.
 Using the above \eqref{AlphGammTemp2} and  $\lim_{\widehat{\mathcal{K}}\ni k\to\infty}d^2\Psi_{\kappa}(X^k|\Gamma^k)(G^k)=0$ again leads to
 \begin{align*}
 0&=\lim_{\widehat{\mathcal{K}}\ni k\to\infty}\sum_{l=1}^{r-1}\sum_{j=1}^{q}\frac{2(1\!-\!\zeta_{j}(\Gamma^k))}{\nu_l(X^k)}\| [\mathcal{S}((U^k)^{\top}\!G^kV_{1}^k)]_{\widehat{a}_{l}\widehat{\beta}_{j}}\|_F^2\\
 &=\lim_{\widehat{\mathcal{K}}\ni k\to\infty}\sum_{l=1}^{r_0}\sum_{j=1}^{q}\frac{2(1\!-\!\zeta_{j}(\Gamma^k))}{\nu_l(X^k)}\| [\mathcal{S}((U^k)^{\top}\!G^kV_{1}^k)]_{\widehat{a}_{l}\widehat{\beta}_{j}}\|_F^2\\
 &\quad+\lim_{\widehat{\mathcal{K}}\ni k\to\infty}\sum_{l=r_0+1}^{r-1}\sum_{j=1}^{q}\frac{2(1\!-\!\zeta_{j}(\Gamma^k))}{\nu_l(X^k)}\| [\mathcal{S}((U^k)^{\top}\!G^kV_{1}^k)]_{\widehat{a}_{l}\widehat{\beta}_{j}}\|_F^2\\
 &=\sum_{i\in \alpha}\sum_{j\in \widehat{\beta}\setminus \widehat{\beta}_0}\frac{2(1\!-\!\zeta_{j}(\overline{\Gamma}))}{\sigma_i(\overline{X})}\| [\mathcal{S}(\overline{U}^{\top}\!G\overline{V}_{1})]_{ij}\|_F^2\\
 &\quad+\lim_{\widehat{\mathcal{K}}\ni k\to\infty}\sum_{i\in \eta^1}\sum_{j\in \widehat{\beta}\setminus \widehat{\beta}_0}\frac{2(1\!-\!\zeta_{j}(\Gamma^k))\| [\mathcal{S}((U^k)^{\top}\!G^kV_{1}^k)]_{ij}\|_F^2}{\sigma_i(X^k)}
 \end{align*}
 which implies $\lim_{\widehat{\mathcal{K}}\ni k\to\infty}\sum_{i\in \eta^1}\sum_{j\in \widehat{\beta}\setminus \widehat{\beta}_0}\frac{2(1\!-\!\zeta_{j}(\Gamma^k))\| [\mathcal{S}((U^k)^{\top}\!G^kV_{1}^k)]_{ij}\|_F^2}{\sigma_i(X^k)}=0$ and
 \[
 [\mathcal{S}(\overline{U}^{\top}\!G\overline{V}_{1})]_{\alpha, \widehat{\beta}\setminus (\widehat{\beta}_0\cup\widehat{\beta}_1\cup\widehat{\beta}^1_+)}\!=\!0.
 \]
 The former, along with $\lim_{\widehat{\mathcal{K}}\ni k\to\infty}\sigma_i(X^k)=0$ for all $i\in\eta^1$, implies 
 \begin{equation*}
 0=\lim_{\widehat{\mathcal{K}}\ni k\to\infty}\sum_{i\in \eta^1}\sum_{j\in \widehat{\beta}\setminus \widehat{\beta}_0}(1\!-\!\zeta_{j}(\Gamma^k))\| [\mathcal{S}((U^k)^{\top}\!G^kV_{1}^k)]_{ij}\|_F^2=(1\!-\!\zeta_{j}(\overline{\Gamma}))\| [\mathcal{S}((\overline{U})^{\top}G\overline{V}_{1})]_{ij}\|_F.
 \end{equation*}
 The above two equations imply that    $[\mathcal{S}(\overline{U}^{\top}\!G\overline{V}_{1})]_{\alpha\cup \eta^1, \widehat{\beta}\setminus (\widehat{\beta}_0\cup\widehat{\beta}_1\cup\widehat{\beta}^1_+)}\!=\!0$. Together with $\widehat{\beta}\setminus(\widehat{\beta}_0\cup\widehat{\beta}_1\cup\widehat{\beta}^1_+)=\beta_+\cup \widehat{\beta}^0_+$ and $\alpha\cup \eta^1=\widehat{\alpha}$, we have $[\mathcal{S}(\overline{U}^{\top}\!G\overline{V}_{1})]_{\widehat{\alpha},\beta_+\cup \widehat{\beta}^0_+}\!=\!0$, which along with \eqref{R2Stru2} implies that the second and third equalities in \eqref{R2Stru2} hold. In addition, 
 \begin{align*} 
  0&=\lim_{\widehat{\mathcal{K}}\ni k\to\infty}\sum_{l=1}^{r-1}\frac{\|(U^k)^\top_{\widehat{a}_l}G^kV_{c}^k\|_F^2}{\nu_l(X^k)}\\
  &=\lim_{\widehat{\mathcal{K}}\ni k\to\infty}\sum_{l=1}^{r_0}\frac{\|(U^k)^\top_{\widehat{a}_l}G^kV_{c}^k\|_F^2}{\nu_l(X^k)}+\lim_{\widehat{\mathcal{K}}\ni k\to\infty}\sum_{l=r_0+1}^{r-1}\frac{\|(U^k)^\top_{\widehat{a}_l}G^kV_{c}^k\|_F^2}{\nu_l(X^k)}\\
  &=\sum_{i\in \alpha}\frac{\|\overline{U}^\top_{i}G\overline{V}_{c}\|_F^2}{\sigma_i(\overline{X})}+\lim_{\widehat{\mathcal{K}}\ni k\to\infty}\sum_{i\in \eta^1}\frac{\|(U^k)^\top_{i}G^kV_{c}^k\|_F^2}{\sigma_i(X^k)},
 \end{align*}
 which implies that $\overline{U}^\top_{\alpha\cup \eta^1}G\overline{V}_{c}=0$, i.e., the first equality in \eqref{R2Stru2} holds,  and
 \begin{align*}
 0 &=\lim_{\widehat{\mathcal{K}}\ni k\to\infty}\!\sum_{l=1}^{r-1}\!\frac{\| [\mathcal{S}((U^k)^{\top}\!G^kV_{1}^k)]_{\widehat{a}_{l}\widehat{\beta}_0}\|_F^2}{\nu_l(X^k)}\\
 &=\lim_{\widehat{\mathcal{K}}\ni k\to\infty}\!\sum_{l=1}^{r_0}\!\frac{\| [\mathcal{S}((U^k)^{\top}\!G^kV_{1}^k)]_{\widehat{a}_{l}\widehat{\beta}_0}\|_F^2}{\nu_l(X^k)}+\lim_{\widehat{\mathcal{K}}\ni k\to\infty}\!\sum_{l=r_0+1}^{r-1}\!\frac{\| [\mathcal{S}((U^k)^{\top}\!G^kV_{1}^k)]_{\widehat{a}_{l}\widehat{\beta}_0}\|_F^2}{\nu_l(X^k)}\\
 &=\sum_{i\in \alpha}\!\frac{\| [\mathcal{S}(\overline{U}^{\top}\!G\overline{V}_{1})]_{i\widehat{\beta}_0}\|_F^2}{\sigma_i(\overline{X})}+\lim_{\widehat{\mathcal{K}}\ni k\to\infty}\!\sum_{i\in \eta^1}\!\frac{\| [\mathcal{S}((U^k)^{\top}\!G^kV_{1}^k)]_{i\widehat{\beta}_0}\|_F^2}{\sigma_i(X^k)},	
 \end{align*} 
 which implies that $[\mathcal{S}(\overline{U}^{\top}\!G\overline{V}_{1})]_{\alpha\cup \eta^1,\widehat{\beta}_0}=0$. Together with \eqref{AppdexBTemp1}, we obtain \eqref{R2Stru3}.
 
 \end{document}